\theoremstyle{plain}
\newtheorem{thm}{Theorem}
  \theoremstyle{plain}
  \newtheorem{conjecture}[thm]{Conjecture}
  \theoremstyle{definition}
  \newtheorem{defn}[thm]{Definition}
  \theoremstyle{plain}
  \newtheorem{lem}[thm]{Lemma}
  \theoremstyle{plain}
  \newtheorem{prop}[thm]{Proposition}
    \theoremstyle{plain}
  \newtheorem{cor}[thm]{Corollary}
\newtheorem{claim}[thm]{Claim}
\newcommand{\ep}{\epsilon}
\newcommand{\g}{\gamma}
\newcommand{\floor}[1]{\left\lfloor#1\right\rfloor}
\newcommand{\ceiling}[1]{\left\lceil#1\right\rceil}
\newcommand{\n}{2\times 10^8}
\begin{document}

\author{Phong Ch\^au %
\thanks{Department of Mathematics and Computer Science, Glendale Community College, Glendale AZ 85302, USA. e-mail: phong.chau@gcmail.maricopa.edu%
} \and Louis DeBiasio %
\thanks{School of Mathematical and Statistical Sciences, Arizona State University,
Tempe, AZ 85287, USA. e-mail address: louis@mathpost.asu.edu %
}\and H.A. Kierstead %
\thanks{School of Mathematical and Statistical Sciences, Arizona State University,
Tempe, AZ 85287, USA. e-mail address: kierstead@asu.edu;
research supported in part by NSF
grant DMS-0901520. %
}}

\title{P\'osa's Conjecture for graphs of order at least $\n$}
\maketitle
\begin{abstract}
In 1962 P\'osa conjectured that every graph $G$ on $n$ vertices with
minimum degree $\delta(G)\ge\frac{2}{3}n$ contains the square of
a hamiltonian cycle. In 1996 Fan and Kierstead proved the path version
of P\'osa's Conjecture. They also proved that it would suffice to show
that $G$ contains the square of a cycle of length greater than $\frac{2}{3}n$.
Still in 1996, Koml\'os, S\'ark\"ozy, and Szemer\'edi proved P\'osa's Conjecture,
using the Regularity and Blow-up Lemmas, for graphs of order $n\ge n_{0}$, where $n_{0}$ 
is a very large constant. Here we show without using these lemmas
that $n_{0}:=\n$ is sufficient.  We are motivated by the 
recent work of Levitt, Szemer\'edi and S\'ark\"ozy, but our methods are
based on techniques that were available in the 90's. 
\end{abstract}

\section{Introduction}

The \emph{square} $H^{2}$ of a graph $H$ is obtained by joining
all pairs $\{x,y\}\subset V(H)$ with distance $dist(x,y)=2$ in $H$.
If $H$ is a path (cycle) then $H^{2}$ is called a \emph{square}
path (cycle). Now fix a graph $G=(V,E)$ on $n$ vertices. We say
that $v_{1}\dots v_{t}$ is a \emph{square} path (cycle) in $G$ if
$v_{1}\dots v_{t}$ is a path (cycle) in $G$ and its square is contained
in $G$. In 1962 P\'osa \cite{E} conjectured: 
\begin{conjecture}
Every graph $G$ with $\delta(G)\ge\frac{2}{3}|G|$ contains a hamiltonian
square cycle. 
\end{conjecture}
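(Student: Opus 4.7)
The plan is to exploit the Fan--Kierstead reduction stated in the introduction: it suffices to produce a square cycle in $G$ of length greater than $\tfrac{2}{3}n$, since a square cycle of that length already forces a hamiltonian square cycle. The entire argument therefore reduces to building one long square cycle under the hypotheses $\delta(G)\ge\tfrac{2}{3}n$ and $|G|\ge\n$.

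To construct that long square cycle I would follow an absorbing-style strategy, but implemented with 1990s techniques --- greedy arguments together with the Fan--Kierstead square-path theorem --- rather than the Regularity and Blow-up Lemmas. First I would set aside a short \emph{absorbing} square path $P_A$ so that for every sufficiently small $S\subseteq V(G)\ssm V(P_A)$, the set $V(P_A)\cup S$ spans a square path with the same endpoints as $P_A$. Such $P_A$ is built by exhibiting, for each vertex $v$, many local swap-gadgets whose interiors can be reconfigured to include $v$; the $\tfrac{2}{3}n$ degree condition provides enough common neighbours of pairs and triples of vertices to make these gadgets plentiful, and a probabilistic or direct counting argument chains them into a single absorbing square path. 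In parallel I would reserve a small \emph{connector} reservoir $R$ with enough common-neighbourhood structure to allow later linkage of prescribed endpoint pairs by short square paths.

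Second, after removing $V(P_A)\cup R$ from $G$, the minimum degree of the remainder is still essentially $\tfrac{2}{3}$ of its order, so the Fan--Kierstead path version of P\'osa's Conjecture yields a square path $P_L$ covering almost every remaining vertex. Third, I would use $R$ to join the two endpoints of $P_L$ to the two endpoints of $P_A$ by short square paths, closing everything into a square cycle, and finally I would absorb the leftover uncovered vertices into $P_A$ via its absorption property to obtain a hamiltonian square cycle (in particular, one of length well above $\tfrac{2}{3}n$).

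The main obstacle will be the combined connecting-and-absorbing step under the explicit constant $n_0=\n$. Unlike ordinary hamiltonicity, each linkage must match two consecutive vertices on both sides, so every gadget has to be constructed from triples of vertices with large common neighbourhood, and absorbing a single vertex into a square path means replacing a short segment rather than inserting into an edge. Producing explicit gadgets efficient enough to drive the construction through without Regularity or Blow-up, while tracking error terms tightly enough that $n\ge 2\times 10^8$ already suffices, is where the real difficulty lies.
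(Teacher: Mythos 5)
Your outline is missing the idea that actually drives the paper: the split into an extremal and a non-extremal case. The hypothesis $\delta(G)\ge\frac{2}{3}n$ is tight, so every step of your construction that needs an \emph{edge} inside a common neighbourhood --- building a swap-gadget from ``triples of vertices with large common neighbourhood,'' or linking two endpoint pairs through the reservoir --- can fail outright when $G$ is close to the extremal example $K_{3t+2}-E(K_{t+1})$: there the relevant common neighbourhoods have size barely above $\frac{n}{3}$ and can be nearly independent, so counting arguments with zero slack in the degree condition do not produce the gadgets you need. This is exactly why the paper first defines $\alpha$-extreme sets, disposes of graphs containing a $\frac{1}{36}$-extreme set by a separate elementary argument (Theorem~\ref{thm:good}), and only then runs the reservoir machinery, whose Connecting Lemma explicitly hypothesizes the absence of $(\alpha,\beta)$-extreme special sets. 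Without this dichotomy your connecting and absorbing steps have no way to get started on near-extremal graphs.

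Two further points. The absorbing-path route you propose is precisely what the paper goes out of its way to avoid: absorption is the approach of Levitt--S\'ark\"ozy--Szemer\'edi and is identified in the paper as a contributor to a much larger $n_0$; the paper replaces it entirely by Theorem~\ref{thm:FK3}, which lets the construction terminate as soon as a square cycle of length greater than $\frac{2}{3}n$ is found, so no leftover vertices ever need to be absorbed. Also, you cannot apply Theorem~\ref{thm:FK2} directly to $G-V(P_A)-R$: after deleting the reservoir the minimum degree of the remainder is only about $(\frac{2}{3}-\ep)$ of its order, below the exact threshold $\frac{2|G|-1}{3}$ that Theorem~\ref{thm:FK2} requires. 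The paper instead proves a Path Cover Lemma producing \emph{two} disjoint square paths of total length greater than $(\frac{5}{6}-2\ep)$ of the remainder, which suffices because only a square cycle of length greater than $\frac{2}{3}n$ is needed before invoking Theorem~\ref{thm:FK3}. Finally, note that the statement as written is the full conjecture for all $n$; both the paper and your sketch address only $n\ge\n$, and the conjecture itself remains open.
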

During the 90's there were numerous partial results on P\'osa's conjecture.
Here we review a number that have a direct impact on this paper. Fan
and Kierstead \cite{FK1,FK2,FK3} proved the following three theorems. 
The first is a connecting lemma that
immediately yields an approximate version of P\'osa's conjecture. 
\begin{thm}[Fan
and Kierstead \cite{FK1}]
\label{thm:FK1}For every $\ep>0$ there exists a constant $m$ such that for
every graph $G$ with $\delta(G)\ge(\frac{2}{3}+\ep)|G|+m$
and every pair $e_{1},e_{2}$ of disjoint ordered edges, $G$ has a hamiltonian 
square path starting with $e_{1}$ and ending with $e_{2}$. In particular,
$G$ has a hamiltonian square cycle.
\end{thm}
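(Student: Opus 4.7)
The plan is to build the hamiltonian square path from $e_1$ to $e_2$ in three stages, exploiting the abundance of common neighbors guaranteed by the minimum degree hypothesis.

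First I would derive two arithmetic consequences of $\delta(G)\ge (\tfrac{2}{3}+\ep)|G|+m$: any two vertices have at least $(\tfrac{1}{3}+2\ep)|G|+2m$ common neighbors, and any three vertices have at least $3\ep|G|+3m$ common neighbors. These let us extend a square path ending in an edge $xy$ by any $z\in N(x)\cap N(y)$ that avoids a forbidden set of size up to roughly $3\ep|G|$, which is the workhorse of all later steps.

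Next I would prove a \emph{short connecting lemma}: any two disjoint ordered edges $f_1=ab$ and $f_2=cd$ can be joined by a square path on at most $K=K(\ep)$ vertices. The approach grows short square paths from each end simultaneously, at each step choosing a fresh common neighbor of the current last edge while avoiding the other growing end; once the ends are within a bounded distance, the three-vertex common-neighbor bound produces a single vertex that merges them into one square path from $f_1$ to $f_2$.

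Using the connecting lemma I would build the hamiltonian square path by iterative greedy extension from $e_1$, adjoining at each step an unused common neighbor of the last two path vertices. This succeeds whenever the unused set exceeds $3\ep|G|+O(1)$, so it continues until only a small leftover set $R$ remains; we then apply the connecting lemma inside $R\cup\{c,d\}$ to close up through $R$ to $e_2$. The corollary about a hamiltonian square cycle follows by choosing $e_1$ and $e_2$ to share endpoints via an auxiliary triple of vertices.

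The hard part will be the endgame: ensuring that when greedy extension halts, the leftover set $R$ is not only small but also structured enough to admit the final connection to $e_2$. I would handle this by monitoring throughout the extension the reserve $U_{cd}:=(N(c)\cap N(d))\setminus V(P)$ and performing local rotation moves whenever $|U_{cd}|$ drops too low --- swapping a vertex of $U_{cd}$ into the path in place of a suitable interior vertex, which is then released back into the unused pool. The additive constant $m=m(\ep)$ absorbs the number of such rotations together with the length bound $K$ from the connecting lemma.
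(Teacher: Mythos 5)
There is a genuine gap at the heart of your third stage, and it is exactly the central difficulty of the whole problem. Extending a square path $P$ ending in the edge $xy$ requires a new vertex in $N(x)\cap N(y)\setminus V(P)$, and the degree hypothesis only gives $|N(x)\cap N(y)|\ge(\tfrac13+2\ep)|G|+2m$. Hence greedy extension is guaranteed to continue only while $|V(P)|<(\tfrac13+2\ep)|G|+2m$; once the path has about $|G|/3$ vertices, the common neighbourhood of its last edge may lie entirely inside $V(P)$ and the process stalls. Your stated threshold --- that extension succeeds whenever the unused set exceeds $3\ep|G|+O(1)$ --- conflates the three-vertex bound with what is actually needed and is off by roughly $\tfrac23|G|$. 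The ``rotation moves'' you invoke do not address this: the obstruction is not that the reserve $U_{cd}$ for $e_2$ is being consumed, but that no admissible next vertex exists at all once $|P|\approx|G|/3$. A secondary, more repairable, issue: merging two square paths at a single vertex $z$ needs $z\in N(x,y,c,d)$, and $4\delta(G)-3|G|$ is negative for small $\ep$, so even your connecting lemma needs a two-step construction (first pass to an edge inside $N(a,b)$ and an edge inside $N(c,d)$, then join those two edges), as in Lemma~\ref{connecting} of this paper.

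The theorem is quoted from Fan--Kierstead \cite{FK1} and not reproved here, but the two ideas the paper extracts from \cite{FK1} --- weak reservoirs and \emph{optimal} square paths --- are precisely what replace your greedy step. The key fact (Lemma~\ref{segment}) is that if $P$ is an optimal square path then every vertex $v\notin P$ satisfies $\|v,P\|\le\frac23|P|$; consequently, deleting an optimal square path from a graph $H$ with $\delta(H)\ge\frac23|H|+c$ leaves a graph $H'$ with $\delta(H')\ge\frac23|H'|+c$, so the additive excess $c\approx\ep|G|$ survives the deletion. One therefore sets aside a weak reservoir $R$, repeatedly extracts optimal square paths from $G-R$ (each maximal square path has at least about $2\ep|G|$ vertices, so only $O(1/\ep)$ extractions are needed), and then splices these boundedly many paths, together with $e_1$ and $e_2$, through $R$ using the connecting lemma and the proportional-degree property of the reservoir. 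Without the optimality lemma, or some absorbing device in its place, a single-path greedy strategy cannot get past $|G|/3$.
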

We shall need two ideas from this paper---\emph{weak reservoirs}
\footnote{The term reservoir is not mentioned in \cite{FK1}, and the modifiers
\emph{weak}, \emph{strong }and \emph{special} are our own invention.
However, in light of more recent papers this terminology provides
a consistent transition (see Definition \ref{def:Reservoir}).
}, and \emph{optimal} square paths and cycles---which will be presented
in the next section. Roughly, given a graph $G$ on $n$ vertices, a weak reservoir is a small fraction $R$ of the vertex set $V(G)$  such that  $|N\cap R|\approx |N||R|/n$ for any neighborhood $N:=N(v)$.  Weak reservoirs were used to connect long square 
paths contained in  $V(G)\setminus R$. The second theorem is a
path version of P\'osa's Conjecture. 
\begin{thm}[Fan and Kierstead \cite{FK2}]
\label{thm:FK2}Every graph $G$ with $\delta(G)\ge\frac{2|G|-1}{3}$
contains a hamiltonian square path. 
\end{thm}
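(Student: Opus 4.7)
The plan is a proof by contradiction via an extremal/rotation argument adapted to \emph{square} paths. Writing $n := |G|$, assume $\delta(G) \ge (2n-1)/3$ but $G$ has no hamiltonian square path; take a square path $P = v_1 v_2 \dots v_t$ of maximum length, with $t < n$, and among all such maximum square paths choose one that maximises a suitable secondary parameter (for instance $d(v_1) + d(v_2) + d(v_{t-1}) + d(v_t)$). The precise secondary choice will be dictated by the rearrangement moves below, and is what prevents the rotation process from cycling indefinitely.

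The first workhorse is the degree condition in the form
\[
|N(u) \cap N(v)| \;\ge\; 2\delta(G) - n \;\ge\; \frac{n-2}{3}, \qquad |N(u) \cap N(v) \cap N(w)| \;\ge\; 3\delta(G) - 2n,
\]
valid for all $u, v, w \in V(G)$, which guarantees that pairs and triples of vertices admit large common neighbourhoods. Since $P$ cannot be prepended or appended to, no off-path vertex lies in $N(v_1) \cap N(v_2)$ or in $N(v_{t-1}) \cap N(v_t)$, so both of these common neighbourhoods lie inside $V(P)$. This already forces $t$ to be large and supplies many ``candidate predecessors'' $v_i \in N(v_1) \cap N(v_2)$ on the path itself. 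In parallel, for each off-path vertex $x$, the set $N(x) \cap V(P)$ is large and avoids both pairs $\{v_1,v_2\}$ and $\{v_{t-1},v_t\}$; the triple-neighbourhood estimate furnishes common neighbours of consecutive path triples, which are the natural objects for attempting to insert $x$ into the interior of $P$.

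The heart of the argument is a rotation step that produces new length-$t$ square paths with fresh endpoints. Given $v_i \in N(v_1) \cap N(v_2)$ with $i \ge 3$, I would splice the reversed prefix $v_{i-1} v_{i-2} \dots v_1$ onto $v_i$ to obtain a rearranged sequence of the same length, and verify---under suitable local conditions on $v_{i-1}, v_i, v_{i+1}$---that the result is still a square path $P'$. Its new endpoints then inherit the property that their pairwise common neighbourhood lies in $V(P)$. Iterating, the collection of attainable endpoints for length-$t$ square paths grows; either some step produces a new endpoint pair with an off-path common neighbour (yielding a longer square path and the desired contradiction), or we accumulate so many forbidden edges inside $V(P)$ that counting edges from off-path vertices into $V(P)$ directly contradicts the degree hypothesis.

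The main obstacle is that rotation of a \emph{square} path is delicate: naively reversing a prefix need not preserve squareness, since the two triples straddling the splice must both form triangles compatible with the square structure on either side. The bulk of the technical work will therefore be a careful local case analysis at each splice point, showing that the degree hypothesis always furnishes either a valid rotation or a substitute move---for instance, a reroute through an off-path common neighbour of a consecutive triple, guaranteed by the triple-neighbourhood estimate---that either extends $P$ outright or produces a new length-$t$ square path with a fresh endpoint. Handling all exceptional local configurations uniformly, and ensuring that the secondary optimality of $P$ blocks infinite cycling, is where I expect the essential difficulty to lie.
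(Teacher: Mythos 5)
This theorem is not proved in the paper you were given; it is quoted verbatim from Fan and Kierstead's paper \cite{FK2}, so the comparison must be with their published argument. Measured against that, your proposal has a concrete quantitative flaw and, beyond it, is a plan rather than a proof. The flaw: with $\delta(G)\ge\frac{2n-1}{3}$ you get $3\delta(G)-2n\ge -1$, so your ``triple-neighbourhood estimate'' $|N(u)\cap N(v)\cap N(w)|\ge 3\delta(G)-2n$ is vacuous --- at the $\frac{2}{3}$ threshold three vertices need have no common neighbour at all (consider $K_{3t+2}-E(K_{t+1})$), and this is precisely the source of difficulty in P\'osa-type problems. Your plan leans on that estimate twice: to insert off-path vertices into consecutive triples of $P$, and as the ``substitute move'' that is supposed to rescue a failed rotation. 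Both uses collapse. Separately, the rotation step itself is unjustified: for $v_{i-1}\dots v_1 v_i v_{i+1}\dots v_t$ to be a square path you need $v_1v_{i+1}\in E$ in addition to $v_i\in N(v_1)\cap N(v_2)$, and nothing in the hypothesis supplies this. You defer exactly this point to ``suitable local conditions'' and ``careful local case analysis,'' but that deferred work is the entire content of the theorem, and there is no evidence the degree condition is strong enough to make naive prefix-reversal rotations go through.

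The actual proof in \cite{FK2} does not use rotation-extension at all. It works with \emph{optimal} square paths --- maximizing length, then the number of 3-chords, then 4-chords, exactly the notion recalled in the present paper --- and derives from optimality sharp upper bounds on the number of edges from an outside vertex or outside edge into segments of the path (the statements reproduced here as Lemma~\ref{segment} and Lemma~\ref{edgetopath}). A global counting argument against $\delta(G)\ge\frac{2n-1}{3}$ then forces the optimal path to be hamiltonian. If you want to reconstruct the proof, the productive move is to abandon rotations and instead ask what the secondary and tertiary optimality conditions force about $\|v,Q\|$ for segments $Q$ of $P$ and vertices $v$ off $P$.
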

The third theorem shows that $V(G)$ can be partitioned into at most two square cycles.
\begin{thm}[Fan and Kierstead \cite{FK3}]
\label{thm:FK3}Suppose $G$ is a graph with $\delta(G)\ge\frac{2}{3}|G|$.
If $G$ has a square cycle of length greater than $\frac{2}{3}|G|$
then $G$ has a hamiltonian square cycle. Moreover, $V(G)$ can be
partitioned into at most two square cycles, each of length at least $\frac{1}{3}|G|$. 
\end{thm}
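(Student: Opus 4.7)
The plan is to prove the extension statement first (any square cycle of length exceeding $\frac{2}{3}|G|$ extends to a hamiltonian one) and then derive the partition claim as a consequence. Write $n=|G|$, and take $C=v_1v_2\cdots v_\ell$ to be a square cycle of \emph{maximum} length in $G$ with $\ell>\frac{2n}{3}$. Set $R:=V(G)\setminus V(C)$ and assume toward a contradiction that $R\neq\emptyset$, so that $|R|=n-\ell<\frac{n}{3}$. The degree hypothesis immediately gives, for every $u\in R$,
\[
|N(u)\cap V(C)|\ge \delta(G)-(|R|-1)\ge \tfrac{2n}{3}-|R|+1=\ell-\tfrac{n}{3}+1>\tfrac{\ell}{2},
\]
so each vertex of $R$ is adjacent to more than half the vertices on $C$.

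The first attempt is a \emph{direct insertion}: find $u\in R$ and an index $i$ with $v_{i-1},v_i,v_{i+1},v_{i+2}\in N(u)$, and splice $u$ between $v_i$ and $v_{i+1}$ to obtain a square cycle of length $\ell+1$, contradicting maximality. The obstacle---and what I expect to be the main difficulty---is that four consecutive neighbors on $C$ are not forced by the degree count alone, since as many as $\tfrac{n}{3}-1$ non-neighbors of $u$ can cluster on $C$ and block every run of length four. What the count \emph{does} give is at least one pair of consecutive neighbors of $u$: the number of cycle-edges incident to some non-neighbor is at most $2(\tfrac{n}{3}-1)<\ell$, so some $v_iv_{i+1}$ with both endpoints in $N(u)$ exists.

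To bridge the gap, the plan is a \emph{swap-then-insert} argument built on the notions of optimal square cycles and weak reservoirs announced in the introduction. Given a pair $v_iv_{i+1}\in N(u)$, if either $v_{i-1}$ or $v_{i+2}$ fails to belong to $N(u)$, I would use the high degree of the offending vertex to exchange it---with either a vertex of $R$ or a vertex of $C$ drawn from a reservoir segment well separated from $v_iv_{i+1}$---while preserving the square-cycle structure in the vicinity. Optimality of $C$ controls which swaps are admissible, and the reservoir supplies enough buffer neighbors for each exchange to localize without destroying the square of a chord elsewhere. After finitely many swaps the four consecutive positions around some neighbor-pair lie entirely in $N(u)$, and then the direct insertion goes through.

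For the partition statement, I would start from a family $\mathcal F$ of vertex-disjoint square cycles in $G$ covering as many vertices as possible. The extension argument above already shows that $\mathcal F$ must cover $V(G)$: any uncovered vertex can be absorbed into the cycle of $\mathcal F$ to which it is most densely adjacent, using the same swap-then-insert routine. A merging lemma (in the spirit of Theorem \ref{thm:FK1}, connecting two square cycles via a short square path in $G$) then reduces $|\mathcal F|$ to at most two, after which any component of length less than $\tfrac{n}{3}$ is absorbed into the other; if either cycle ever exceeds length $\tfrac{2n}{3}$, the first part upgrades $\mathcal F$ to a single hamiltonian square cycle, yielding both conclusions.
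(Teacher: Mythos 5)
This statement is Theorem~\ref{thm:FK3}, which the paper cites from Fan and Kierstead's article ``Partitioning a graph into two square-cycles'' rather than proving; so the comparison here is against that external proof, which occupies an entire paper built on the machinery of optimal square paths and cycles (Lemma~\ref{segment} and its relatives). Your opening reductions are fine: the count $|N(u)\cap V(C)|\ge \ell-\frac{n}{3}+1>\frac{\ell}{2}$ is correct, the insertion of $u$ given four consecutive neighbors $v_{i-1},v_i,v_{i+1},v_{i+2}$ is a valid square-cycle extension, and the edge-count showing $u$ has two consecutive neighbors on $C$ is correct. But everything after that is a gap, and it is precisely where the entire difficulty of the theorem lives. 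The ``swap-then-insert'' step is asserted, not argued: you give no invariant that guarantees a swap preserves the square-cycle property globally (replacing one vertex of a square cycle affects four chords, not a local neighborhood you can buffer), no potential function showing the process terminates, and no reason why it should converge to a run of four consecutive neighbors rather than cycle among blocked configurations. Note also that your appeal to ``weak reservoirs'' is misplaced: in this paper (and in \cite{FK1}) a reservoir is a randomly chosen vertex subset with proportional-degree properties, available only after a probabilistic construction in large graphs; it is not a structure present in an arbitrary graph with $\delta(G)\ge\frac{2}{3}|G|$, and the paper explicitly states that the proofs of Theorems~\ref{thm:FK2} and~\ref{thm:FK3} do \emph{not} use weak reservoirs. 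The actual argument instead exploits the secondary optimality conditions (maximizing 3-chords, then 4-chords) to derive structural information such as Lemma~\ref{segment}, and even then the conclusion is far from immediate: plugging Lemma~\ref{segment} into the degree bound only yields $|C|\le n-2$, not a contradiction, so a genuinely new idea is needed.

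The partition claim has independent gaps. Your absorption step presumes that an uncovered vertex has majority adjacency to some single cycle of the family $\mathcal F$, but that was derived from $\ell>\frac{2n}{3}$ and fails when $\mathcal F$ consists of several shorter cycles. Your proposed merging lemma ``in the spirit of Theorem~\ref{thm:FK1}'' is not available either, since that theorem requires $\delta(G)\ge(\frac{2}{3}+\ep)|G|+m$, strictly stronger than the hypothesis here; connecting under the exact bound $\frac{2}{3}|G|$ is one of the hard points this whole paper is organized around. As written, the proposal reduces the theorem to two unproved claims (the swap process terminates correctly; short square cycles can be merged at the exact degree threshold) that together are essentially equivalent to the theorem itself.
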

The proofs of Theorems \ref{thm:FK2} and \ref{thm:FK3} are based
on optimal paths and cycles, but do not use weak reservoirs. Theorem
\ref{thm:FK3} is essential to this paper, because it allows our constructions 
to terminate as soon as we get a square cycle of length greater than
$\frac{2}{3}|G|$.

Next came a major breakthrough. Koml\'os, S\'ark\"ozy and Szemer\'edi proved
their famous Blow-up Lemma \cite{KSSbu}, and used it and the Regularity
Lemma \cite{Sz} to prove: 
\begin{thm}[Koml\'os, S\'ark\"ozy and Szemer\'edi \cite{KSSp}]
\label{thm:KSSp}There exists a constant $n_0$ such that every graph
$G$ with $|G|\ge n_0$ and $\delta(G)\ge\frac{2}{3}|G|$ has a hamiltonian
square cycle.
\end{thm}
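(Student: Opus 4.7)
The plan is to deploy the Szemer\'edi Regularity Lemma together with the Blow-up Lemma of Koml\'os, S\'ark\"ozy and Szemer\'edi, using the Hajnal--Szemer\'edi theorem to supply a triangle factor in the reduced graph. Fix constants $\ep \ll d \ll 1$ and apply the Regularity Lemma to $G$, obtaining an equitable partition $V_{0},V_{1},\dots,V_{k}$ with $|V_{0}|\le\ep n$ and all but at most $\ep\binom{k}{2}$ pairs $(V_{i},V_{j})$ being $\ep$-regular. Let $R$ be the reduced graph on $\{V_{1},\dots,V_{k}\}$ whose edges correspond to $\ep$-regular pairs of density at least $d$. A standard calculation (absorbing ``bad'' clusters into $V_{0}$) shows $\delta(R)\ge\left(\tfrac{2}{3}-\ep'\right)k$, where $\ep'\to 0$ as $\ep,d\to 0$.

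Next I would extract a triangle structure in $R$. After discarding at most two clusters into $V_{0}$ so that $3\mid k$, the Hajnal--Szemer\'edi theorem yields a spanning triangle factor $T_{1},\dots,T_{k/3}$ in $R$. Because the minimum degree of $R$ is so large, $R$ is highly connected, and one can chain the triangles cyclically into a sequence $T_{1}\to T_{2}\to\dots\to T_{k/3}\to T_{1}$ in which consecutive triangles are joined by short connecting square paths passing through $\ep$-regular pairs at the cluster level. Each triangle $T_{i}=\{A_{i},B_{i},C_{i}\}$ will host a long square path that cycles through its three clusters in the pattern $A_{i}B_{i}C_{i}A_{i}B_{i}C_{i}\dots$, and the connecting paths will splice these into a single nearly spanning square cycle.

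The long square path inside each triangle is produced by the Blow-up Lemma: after thinning $A_{i},B_{i},C_{i}$ to a super-regular triple, one may embed any bounded-degree spanning subgraph of the complete 3-partite blow-up, including in particular a spanning square path with prescribed endpoints that mesh with the connecting paths. This yields a square cycle covering all but a set $L$ of $o(n)$ vertices, namely $V_{0}$ together with the vertices thinned out during super-regularization.

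The main obstacle is absorbing $L$ into the near-hamiltonian square cycle. I would reserve, \emph{before} applying the Blow-up Lemma, a distributed family of ``absorbing gadgets'' along the to-be-built square cycle: short ordered tuples of vertices such that for every $v$ in the eventual leftover set, sufficiently many gadgets admit a local modification inserting $v$ while preserving the square structure. The hypothesis $\delta(G)\ge\tfrac{2}{3}|G|$ guarantees that every vertex lies in many triangles with typical cluster-pairs, producing the required pool of insertion sites. Coordinating this reservation with the Blow-up embedding is the delicate technical point; once it is handled, a greedy absorption completes the hamiltonian square cycle. The resulting threshold $n_{0}$ inherits the tower-type bounds of the Regularity Lemma and is therefore extremely large, which is precisely what motivates the present paper to seek a regularity-free approach.
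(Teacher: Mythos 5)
There is a genuine gap, and it sits exactly at the point your sketch treats as routine: the Hajnal--Szemer\'edi step. After regularization you only have $\delta(R)\ge\left(\tfrac{2}{3}-\ep'\right)k$, and the Hajnal--Szemer\'edi theorem requires $\delta(R)\ge\tfrac{2}{3}k$ \emph{exactly} to produce a spanning triangle factor; the $\ep'$ loss is not a harmless perturbation but is fatal, because the conclusion is genuinely false below the threshold. Concretely, if $G$ contains an almost-independent set of size almost $\tfrac{1}{3}|G|$ (which is compatible with $\delta(G)\ge\tfrac{2}{3}|G|$, cf.\ the near-extremal example $K_{3t+2}-E(K_{t+1})$ discussed in the paper), then the reduced graph $R$ inherits a near-independent set of roughly $k/3$ clusters and has no triangle factor at all, so your chain of triangles $T_1,\dots,T_{k/3}$ never gets off the ground. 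This is precisely why Koml\'os, S\'ark\"ozy and Szemer\'edi split the proof into an extremal and a non-extremal case --- the step the introduction of this paper singles out as ``the crucial idea'' of \cite{KSSp} --- and your sketch omits the extremal case entirely. In the non-extremal case one must use the \emph{absence} of extremal configurations (not just the degree bound on $R$) to build the triangle structure, and the extremal case needs a separate, combinatorial argument. A secondary soft spot is the absorption of the leftover set $L$: coordinating reserved gadgets with the Blow-up embedding is not a detail one can wave at, and the original proof instead inserts the exceptional vertices into the cluster structure before invoking the Blow-up Lemma with prescribed endpoints.

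For context: this theorem is quoted in the paper from \cite{KSSp}, and the route you propose is indeed (modulo the gap above) the shape of that original proof. The paper itself reproves the statement with the explicit threshold $n_0=\n$ by an entirely different, regularity-free argument --- an elementary treatment of the extremal case (Theorem~\ref{thm:good}), and in the non-extremal case a probabilistically constructed special reservoir, a connecting lemma, and a path cover lemma, with Theorem~\ref{thm:FK3} replacing both the Blow-up Lemma and the absorption step by upgrading any square cycle of length greater than $\tfrac{2}{3}n$ to a hamiltonian one. If you want to pursue your route, you must add the extremal-case analysis and, in the non-extremal case, justify the triangle factor without Hajnal--Szemer\'edi at the deficient degree $\left(\tfrac{2}{3}-\ep'\right)k$.
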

Their proof has the following structure. First they determine extremal
configurations that are very close to being counterexamples, but because
of the tightness of the degree condition, cannot achieve this status.
(For example, if the independence number $\alpha(G)>\frac{1}{3}|G|$
then $G$ does not have a hamiltonian square cycle, but then also does 
not satisfy $\delta(G)\ge\frac{2}{3}|G|.$ Moreover if $G$ has an
almost independent set of size almost $\frac{1}{3}|G|$ and $\delta(G)\geq \frac{2}{3}|G|$, then we will 
see that $G$ does have a hamiltonian square cycle.) Next they proved
that if $|G|$ is sufficiently large, $\delta(G)\ge\frac{2}{3}|G|$,
and $G$ has an extremal configuration, then $G$ has a hamiltonian
square cycle. When there are no extremal configurations, the Regularity
Lemma imposes a pseudo random structure on the graph that can be exploited,
using this lack of extremal configurations and the Blow-up Lemma,
to construct a hamiltonian square cycle. The use of the Regularity
Lemma causes the constant $n_0$ to be extremely large.

Very recently R\"odl, Ruci\'{n}ski and Szemer\'edi have made another 
important advance \cite{RRS1,RRS2}. They proved
the following version of Dirac's Theorem for 3-uniform hypergraphs
(3-graphs). An \emph{open chain} $P:=v_{1}v_{2}v_{3}\dots v_{s-2}v_{s-1}v_{s}$
in a 3-graph $H$ is a sequence of distinct vertices such that 
$v_{i}v_{i+1}v_{i+2}\in E(H)$ for all $i\in[s-2]$; $P$ is a \emph{closed
chain} if in addition $v_{s-1}v_{s}v_{1},v_{s}v_{1}v_{2}\in E(H)$.
\begin{thm}[R\"odl, Ruci\'{n}ski and Szemer\'edi \cite{RRS2}]
There exists an integer $n_{0}$ such that for every $3$-graph $H$
on at least $n_{0}$ vertices, if every pair of vertices of $H$ is
contained in at least $\lfloor\frac{1}{2}|H|\rfloor$ edges of $H$
then $H$contains a hamiltonian closed chain. 
\end{thm}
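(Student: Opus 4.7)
The plan is to adapt the absorbing method together with a reservoir technique, working entirely within the 3-graph $H$ and exploiting the codegree hypothesis $d(u,v) \geq \lfloor |H|/2 \rfloor$ at every step. This hypothesis is the natural hypergraph analogue of Dirac's condition on pairs, and it guarantees that for any two ordered pairs $(a,b)$ and $(c,d)$ of distinct vertices, the common-link sets $N(a,b) := \{x : abx \in E(H)\}$ and $N(c,d)$ each cover at least half of the remaining vertex set.

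First I would prove a \emph{Connecting Lemma}: any two disjoint ordered pairs in $V(H)$ can be joined by an open chain of bounded length, with many choices for the interior vertices. Iterating the codegree bound a constant number of times---each new interior vertex lying in a codegree set that still covers nearly half of $V(H)$---produces such a chain. From this I would extract a \emph{Reservoir} $R \subseteq V(H)$ of size $\eta n$, with $\eta$ a small constant, by a random selection argument so that $R$ inherits the codegree condition on average and any constant-size collection of endpoint pairs drawn from $V(H) \setminus R$ can be simultaneously connected through interior vertices chosen only from $R$.

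Next I would construct an \emph{Absorbing Chain} $P_A$ in $V(H) \setminus R$ of length $O(\eta n)$ that, after internal rewiring, can absorb any leftover set $S$ with $|S| \leq \eta^{2} n$. The building blocks are short ``$v$-absorbers'': for each vertex $v$, the codegree condition produces polynomially many short chains into which $v$ can be inserted, and a random subfamily of absorbers almost surely contains several absorbers for every vertex. These are spliced into a single chain $P_A$ via the Connecting Lemma. To cover most of $V(H) \setminus (V(P_A) \cup R)$, I would iteratively extend the free end of $P_A$ one vertex at a time using the codegree condition, obtaining a long open chain whose complement in $V(H) \setminus R$ is a garbage set $S$ of controlled size. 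This $S$ is absorbed into $P_A$, and the resulting open chain is closed into a hamiltonian closed chain by a single application of the Connecting Lemma through $R$.

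The main obstacle is driving the greedy extension down to a sufficiently small leftover without invoking the Regularity and Blow-up Lemmas. Once the uncovered set becomes thin, local codegrees on it can degrade, and continuing extension requires rewiring along the current chain, in the spirit of the optimal-path arguments used for Theorem \ref{thm:FK2}. I expect this step to require a dichotomy: either $H$ is extremal (close to a near-equipartite structure where ad hoc covering succeeds) or $H$ is non-extremal (in which case quasirandomness-type arguments replace the Regularity Lemma and drive the extension all the way down to an $o(n)$ remainder, after which the absorber finishes the job).
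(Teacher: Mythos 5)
First, a point of order: the paper does not prove this statement at all---it is quoted from \cite{RRS2} as background, and the surrounding text only summarizes the structure of that (very long) proof. Measured against that summary, your outline---connecting lemma, reservoir, absorbing chain, long-chain cover, extremal/non-extremal dichotomy---is the correct skeleton of the actual argument. But as a proof it has at least two genuine gaps. The first is the reservoir. Choosing $R$ ``by a random selection argument so that $R$ inherits the codegree condition on average'' is not enough to run the Connecting Lemma inside $R$: connecting requires repeatedly intersecting link sets restricted to $R$ and to the ever-shrinking unused part of $R$, and you need these restricted sets to be non-degenerate (e.g.\ to still contain edges of the relevant link graphs). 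Guaranteeing that for all subsets that could arise is exponentially many events, so Chernoff plus a union bound fails---this is exactly the obstruction the present paper discusses after Definition \ref{def:Reservoir}, and it is why \cite{RRS2} resorts to weak hypergraph regularity to manufacture \emph{strong} reservoirs (and why this paper invents \emph{special} reservoirs to dodge regularity in the graph setting). Your proposal silently assumes this difficulty away.

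The second gap is the covering step. Greedy one-vertex extension of the open chain stalls far too early: the hypothesis $d(u,v)\ge\lfloor n/2\rfloor$ only guarantees that the terminal pair of the chain has a continuation among about half the vertices, so once the chain occupies roughly $n/2$ vertices the greedy step can fail outright, leaving a leftover of order $n/2$---vastly larger than the $O(\eta^{2}n)$ your absorber can handle. You correctly identify this as ``the main obstacle,'' but you then defer it to an unproved extremal/non-extremal dichotomy together with unspecified ``quasirandomness-type arguments''; that deferred content is precisely the hard core of \cite{RRS2} (and the analogue of what Sections \ref{sec:nex} and \ref{sec:Ex} of this paper do in the graph case, where Theorem \ref{thm:FK3} is available to finish from a long square cycle---no such shortcut is known for $3$-graphs). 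As it stands, the proposal is a plausible research plan that names the right ingredients, not a proof.
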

The remarkable proof is very long, but has a similar structure to
the proof of Theorem~\ref{thm:KSSp}. However, a major difference 
is that the non-extremal case does not use any version of the Blow-up Lemma, and regularity
(weak hypergraph regularity) is only used in a quite generic way to
construct various \emph{strong reservoirs}---weak reservoirs with no extreme sets. The Blow-up Lemma is replaced
by a construction based on an ingenious \emph{absorbing path }lemma\emph{,}
and a \emph{connecting} lemma, that uses the strong reservoir.

Levitt, S\'ark\"ozy and Szemer\'edi \cite{LSS} applied similar techniques 
to the non-extremal case of P\'osa's Conjecture without using the Regularity Lemma, and thus proved the result for much smaller graphs than those considered in Theorem \ref{thm:KSSp}. 

Here we show that P\'osa's Conjecture holds for graphs of order at least
$\n$ without using the Regularity-Blow-up
method. In addition, our proof of the extremal case holds for all $n$.
We were influenced by the ideas of \cite{LSS}, but only rely on results
from \cite{FK1,FK2,FK3}, and the idea from \cite{KSSp} of dividing
the problem into an extremal case and a non-extremal case.  We avoid the Blow-up Lemma and absorbing paths by using Theorem~\ref{thm:FK3}. 
 Our approach is explained fully in the
next section.

\subsubsection*{Notation}

Most of our notation is consistent with Diestel's graph theory text
\cite{Di}. In particular note that $P^{n}$ is a path on $n$ edges,
$|G|=|V(G)|$, $\|G\|=|E(G)|$, and $d(v)$ is the degree of the vertex
$v$. For $A,B\subseteq V(G)$, let $\|A,B\|=|E(A,B)|$, where $E(A,B)$ is the set of edges with one end
in $A$ and the other in $B$, in particular we shall write $\|a,B\|$ if $A=\{a\}$. We also use $\overline{\|A,B\|}$ to denote the number of edges  in the complement of $G$ that have one end in  $A$ and the other in  $B$. 
For $a_{1},a_{2},\dots,a_{k}\in V(G)$, let $N(a_{1},a_{2},\dots,a_{k})=N(a_{1})\cap N(a_{2})\dots\cap N(a_{k})$.

\section{Main theorem and proof strategy}

Here is our main result:
\begin{thm}
\label{main} Let $G$ be a graph on $n$ vertices with $n\geq n_{0}:=\n$.
If $\delta(G)\geq\frac{2}{3}n$, then $G$ has a hamiltonian square cycle.
\end{thm}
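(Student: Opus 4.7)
The overall plan is to follow the extremal/non-extremal dichotomy introduced by Koml\'os, S\'ark\"ozy and Szemer\'edi, but to replace their regularity/blow-up machinery by direct hand-crafted arguments that rely ultimately on Theorem~\ref{thm:FK3}. This last theorem is crucial: it lets me stop as soon as I exhibit a single square cycle of length more than $\tfrac{2}{3}n$, so the entire goal reduces to producing any such cycle. Fix a small parameter $\alpha>0$ to be tuned against $n_{0}$. I would call $G$ \emph{extremal} if some set $I\subseteq V(G)$ of size approximately $n/3$ spans fewer than $\alpha n^{2}$ edges (the near-independent set of critical size that almost defeats the degree condition), and \emph{non-extremal} otherwise; the two cases are treated separately.

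In the non-extremal case I would first extract a small random \emph{strong reservoir} $R$ of size $\beta n$ with $\beta\ll\alpha$. A Chernoff plus union bound argument for $n\ge\n$ lets me demand simultaneously that $|N(v)\cap R|\approx \tfrac{2}{3}|R|$ for every $v\in V(G)$, that analogous concentration holds for $N(u_{1},u_{2})\cap R$ and $N(u_{1},u_{2},u_{3})\cap R$ (so many connectors through any prescribed target edges exist), and that the induced graph on $R$ is itself non-extremal. Setting $G':=G-R$ one gets $\delta(G')\ge(\tfrac{2}{3}+\epsilon)|G'|+m$, so Theorem~\ref{thm:FK1} yields a hamiltonian square path $P$ in $G'$ between any prescribed pair of disjoint ordered edges. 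I would then splice the two ends of $P$ together through $R$ using short square bridges in common neighborhoods inside $R$, and greedily absorb the remaining reservoir vertices two or three at a time, each step protected by the pseudorandom properties of $R$. The resulting square cycle already has length at least $|G'|>\tfrac{2}{3}n$, and Theorem~\ref{thm:FK3} converts it into a hamiltonian square cycle.

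In the extremal case I would argue directly, with no large-$n$ hypothesis required. The near-independent set $I$ together with $\delta(G)\ge\tfrac{2}{3}n$ forces $V\setminus I$ to be close to either the disjoint union of two cliques of size $\approx n/3$ or a complete bipartite graph on parts of that same size. In each subcase I would build the square cycle by weaving the pattern $\dots a_{k}b_{k}i_{k}a_{k+1}b_{k+1}i_{k+1}\dots$ through the three classes, placing $I$-vertices at spaced intervals so that the non-edges inside $I$ never appear as chords, and repairing the handful of exceptional vertices whose adjacency into $I$ (or across the two dense sides) is deficient by small local exchanges.

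I expect the main obstacle to be the connecting and absorbing step in the non-extremal case. Without the Blow-up Lemma the reservoir and the absorption procedure must be engineered so that explicit concentration estimates survive every union bound, and the mutual calibration of $\alpha$, $\beta$, $\epsilon$, and the constant $m$ coming from Theorem~\ref{thm:FK1} is what drives $n_{0}$ up to $\n$. The extremal case is conceptually easier but delicate because it must hold for all $n$, so the enumeration of exceptional vertices and their adjacencies must be carried out explicitly rather than hidden inside an asymptotic error term.
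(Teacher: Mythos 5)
Your high-level architecture (extremal/non-extremal dichotomy, a pseudorandom reservoir for connecting, and Theorem~\ref{thm:FK3} to finish from any square cycle longer than $\frac{2}{3}n$) matches the paper, but the central step of your non-extremal case fails. You claim that after deleting the reservoir $R$ the graph $G':=G-R$ satisfies $\delta(G')\ge(\frac{2}{3}+\epsilon)|G'|+m$, so that Theorem~\ref{thm:FK1} hands you a hamiltonian square path of $G'$. This is backwards: if $R$ is pseudorandom then every vertex loses about a $\frac{d(v)}{n}$-fraction of its neighbors, so $d_{G'}(v)\ge d(v)\bigl(1-\frac{|R|}{n}\bigr)-\epsilon|R|$, and the minimum degree \emph{ratio} of $G'$ drops slightly \emph{below} $\frac{2}{3}$ rather than rising above it. Removing a random set cannot boost the density past the $\frac23$ threshold, so the hypothesis of Theorem~\ref{thm:FK1} is never met and your whole non-extremal construction has no starting point. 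This is exactly why the paper does not attempt to span $G-R$ with one square path: it proves a separate Path Cover Lemma (using Theorem~\ref{thm:FK2} and the optimal-path Lemmas~\ref{segment} and \ref{edgetopath}) showing that a graph with $\delta\ge(\frac{2}{3}-\epsilon)h$ can be covered up to a $(\frac{5}{6}-2\epsilon)$-fraction by \emph{two} disjoint square paths, which together with a suitably sized reservoir still yields a cycle of length more than $\frac{2}{3}n$.

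A second, quantitative gap: you ask that $G[R]$ itself be non-extremal (a \emph{strong} reservoir) and propose to get this by Chernoff plus a union bound. There are exponentially many candidate extreme subsets of $R$, so the union bound does not close at $n_0=\n$; the paper explicitly flags this failure and replaces strong reservoirs by \emph{special} reservoirs, controlling only the polynomially many sets of the form $(N(u,v,w)\cup N(u,v,x))\cap N(y)$ that actually arise in the Connecting Lemma. Finally, in the extremal case your structural claim that $V\setminus I$ must be close to two cliques or to a complete bipartite graph is unjustified: the degree condition only forces $\delta(G[V\setminus I])\gtrsim\frac{1}{2}|V\setminus I|$, a Dirac-type condition compatible with many structures. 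The paper instead extracts a long (even) cycle or path in $G[V\setminus I]$ via elementary circumference arguments and inserts the vertices of $I$ into ``ports'' along it by a Hall-type matching, which is the argument you would need here.
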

In this section we organize the structure of the proof. The first
step is to define a usable extremal configuration. Our choice is simpler
than the choice in \cite{LSS}, which was much simpler than the several
extremal configurations used in \cite{KSSp}. A priori, this makes the extremal case easier and the non-extremal case harder.
\begin{defn}
Let $G$ be a graph on $n$ vertices.  A set $S\subseteq V(G)$ is $\alpha$-\emph{extreme} if $|S|\geq (1-\alpha)\frac{n}{3}$
and $\|v,S\|<\alpha \frac{n}{3}$ for all $v\in S$. 
\end{defn}
The proof divides into two parts, depending on whether $G$ is $\frac{1}{36}$-extreme,
i.e., contains an $\alpha$-extreme set with $\alpha:=\frac{1}{36}$.
The extreme case is handled in Section~\ref{sec:Ex}, where we prove
the following theorem without assuming anything about the order of
$G$. Its proof only requires elementary graph theory. Notice that
$K_{3t+2}-E(K_{t+1})$ demonstrates that the degree condition is tight.
\begin{thm}[Extremal Case]
\label{thm:good}Let $G$ be a graph on $n$ vertices with $\delta(G)\geq \frac{2}{3}n$. If $G$ has a $\frac{1}{36}$-extreme set, then $G$ has a hamiltonian square cycle. 
\end{thm}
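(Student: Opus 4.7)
The plan is to exploit the $\frac{1}{36}$-extreme set to uncover a near-tripartition $(A, B, C)$ of $V(G)$ with $|A|, |B|, |C|$ close to $n/3$ and with the bipartite graphs $G[A,B], G[B,C], G[A,C]$ nearly complete, then to build a square cycle of length greater than $\frac{2n}{3}$ following the tripartite pattern $ABCABC\ldots$, and finally to apply Theorem~\ref{thm:FK3} to obtain a hamiltonian square cycle.

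First, starting from the given $\frac{1}{36}$-extreme set $S$, refine it to a set $A\supseteq S$ of size essentially $\lfloor n/3\rfloor$ satisfying $\|v,A\|=O(\alpha n)$ for every $v\in A$ and $\|u,A\|=\Omega(\alpha n)$ for every $u\in V\setminus A$; a greedy enlargement followed by a few vertex swaps, all controlled by the degree bound $\delta(G)\ge 2n/3$, works, since this bound forces $|A|\le (1+\alpha)n/3$. Next, partition $V\setminus A=B\cup C$ into parts of size near $n/3$ so that every $u\in V\setminus A$ has linearly many neighbors in both $B$ and $C$; a random balanced partition together with a Chernoff-style concentration bound accomplishes this, up to a small set $E$ of $O(\alpha n)$ exceptional vertices which will be discarded (permissible because we only need cycle length $>\frac{2n}{3}$, not $n$).

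Then, using the near-completeness of the bipartite graphs on the non-exceptional vertices, greedily build a square cycle of length $\ell$ with $\frac{2n}{3}<\ell\le 3\min(|A|,|B|,|C|)$ in the pattern $a_1 b_1 c_1 a_2 b_2 c_2 \ldots a_k b_k c_k$. At each step the triangle $\{a_i,b_i,c_i\}$ (with edges $a_i b_i, b_i c_i, a_i c_i$) and the three cross-edges $b_{i-1}a_i, c_{i-1}a_i, c_{i-1}b_i$ are easy to secure because only $O(\alpha n)$ vertices in one part can fail to be adjacent to a given vertex in another. Reserve a small pool of flexible vertices in each part up front, and use a Hall-type matching at the end to close the path into a cycle. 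Theorem~\ref{thm:FK3} then produces the hamiltonian square cycle.

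The chief obstacle is the closure step: joining the final triple to the starting triple by three cross-edges in the tripartite pattern. Reserving a flexible pool in each part and matching across it is the natural fix, but requires balancing---small enough to preserve cycle length $>\frac{2n}{3}$, large enough to guarantee the closure exists. A secondary difficulty is controlling the number of exceptional vertices through the $\frac{1}{36}$-extreme hypothesis together with the degree bound.
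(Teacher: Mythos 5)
There is a genuine gap at the very first step: the hypothesis does not give you a near-tripartition with all three bipartite graphs nearly complete. A $\frac{1}{36}$-extreme set $S$ together with $\delta(G)\ge\frac{2}{3}n$ forces only \emph{one} almost-independent part of size $\approx\frac{n}{3}$ that is almost completely joined to $T:=V\setminus S$; the induced graph $G[T]$ merely inherits $\delta(G[T])\gtrsim \frac{2}{3}n-|S|\approx\frac{1}{2}|T|$, which is a Dirac-type condition, not near-completeness. Concretely, take $S$ independent of size $\approx\frac{n}{3}$, completely joined to $T$, and let $G[T]$ be two cliques $T_a,T_b$ of size $\approx\frac{n}{3}$ joined only by a perfect matching (adjust sizes by $O(1)$ so that $\delta(G)\ge\frac{2}{3}n$ holds). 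This graph satisfies all hypotheses of the theorem, but for \emph{every} balanced partition $T=B\cup C$ the bipartite graph $G[B,C]$ is missing a constant fraction of its pairs, so your claim that ``only $O(\alpha n)$ vertices in one part can fail to be adjacent to a given vertex in another'' is false for the pair $(B,C)$, and the greedy $a_1b_1c_1a_2b_2c_2\ldots$ construction (which needs the $B$--$C$ edges $b_ic_i$ and $c_ib_{i+1}$ at every step) has no reason to succeed. The weaker property your random partition actually delivers---each vertex has linearly many neighbors in both $B$ and $C$---is not enough to run the greedy argument, and upgrading it to a spanning structure is essentially the whole difficulty of the extremal case.

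The paper's proof is organized around the correct pattern, which is $TTS\,TTS\ldots$ rather than $ABC\,ABC\ldots$: one finds a long \emph{even} cycle (or, failing that, a long path or two large components, which are then linked through $S$ by two disjoint square $P^5$'s) inside $G[T_1]$ for a slightly trimmed $T_1\subseteq T$, using only the Dirac-type bound $\delta(G[T_1])\ge m>\frac{2}{3}k$; then the vertices of $S$ are inserted into every second edge slot (``ports'') of that cycle via a Hall's-theorem argument, exactly because $S$ is the one part that \emph{is} almost completely joined to $T$. This yields a square cycle of length $>\frac{2}{3}n$, after which Theorem~\ref{thm:FK3} finishes, as you intended. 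Your closing-up concern (reserving a flexible pool and matching) is the analogue of the paper's port/Hall step and is sound in spirit, but it cannot rescue the construction once the tripartite near-completeness assumption fails. To repair your argument you would need to replace the tripartite template by a hamiltonian-type analysis of $G[T]$ under the condition $\delta(G[T])\gtrsim\frac{1}{2}|T|$, including the disconnected and ``long path but no long cycle'' configurations, which is precisely Case~2 of the paper.
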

The non-extremal case is more complicated. In Section~\ref{sec:nex} we will prove: 
\begin{thm}[Non-extremal Case]
\label{non-extremal} Let $G$ be a graph on $n$ vertices with $\delta(G)\geq\frac{2}{3}n$
and $n\geq n_{0}:=\n$. If $G$ does not contain a $\frac{1}{36}$-extreme
set, then $G$ has a hamiltonian square cycle. 
\end{thm}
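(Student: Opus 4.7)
The plan is to reduce the non-extremal case to Theorem~\ref{thm:FK3}: it suffices to build any square cycle in $G$ of length greater than $\frac{2}{3}n$, and Theorem~\ref{thm:FK3} will then extend it to a hamiltonian square cycle. The strategy follows the classical reservoir-and-connect method of \cite{FK1}, implemented so that the absence of a $\frac{1}{36}$-extreme set is exploited exactly once---to guarantee that our reservoir is \emph{strong} (contains no extreme subset of its own) rather than merely weak.

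First I would pick, by the probabilistic method, a random subset $R\subseteq V(G)$ with $|R|=\lfloor\epsilon n\rfloor$ for a small constant $\epsilon>0$, tuned so that $n_0:=\n$ suffices. Standard Chernoff-type estimates give $|N(v)\cap R|=(1\pm o(1))\epsilon\, d(v)$ simultaneously for every $v\in V(G)$, together with analogous concentration for joint neighborhoods $|N(u_1,u_2,u_3)\cap R|$; this is the weak-reservoir condition of \cite{FK1}. To upgrade $R$ to a strong reservoir I would argue by contradiction: if $R$ itself contained a $\frac{1}{36}$-extreme set $S$, then because the concentration estimates approximately preserve ratios between $|R|$ and $n$, one could pull $S$ back to a $\frac{1}{36}$-extreme (or nearly extreme) set in $G$, contradicting the non-extremal hypothesis. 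Deleting a handful of atypical vertices cleans up whatever slack is needed.

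Next, using the connecting portion of Theorem~\ref{thm:FK1}, I would grow square paths inside $V(G)\setminus R$ by repeated greedy extension: the minimum-degree condition $\delta(G)\geq\frac{2}{3}n$ ensures that at each step the current endpoint has many common neighbors available outside the current path, so extension succeeds whenever more than a small fraction of $V(G)\setminus R$ remains uncovered. Whenever local obstructions halt a path, I would splice in a short bridge square path through $R$: the strong-reservoir property guarantees that any ordered pair of endpoints has enough common reservoir neighbors to support such a connection, and each bridge costs only $O(1)$ vertices of $R$, so $R$ stays strong throughout. Iterating until only a small number of vertices outside $R$ are uncovered, absorbing the remaining few via one more connecting step, and then closing the concatenated square paths into a square cycle with a final bridge through $R$, yields a square cycle of length greater than $\frac{2}{3}n$. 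Theorem~\ref{thm:FK3} finishes the proof.

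The main obstacle will be joint control of two budgets: the reservoir $R$ must stay strong after all bridge insertions, and the greedy extension on $V(G)\setminus R$ must keep producing long square paths even as small pockets of low-flexibility vertices accumulate. The first issue reduces to choosing $\epsilon$ large enough that the strong-reservoir property survives the deletion of $o(\epsilon n)$ vertices, by reapplying the pull-back argument to the residual reservoir. The second is handled by invoking Theorem~\ref{thm:FK2} locally on any residual high-density pieces to guarantee long square paths there. The large constant $n_0=\n$ simply reflects explicit tracking of error terms from the probabilistic selection, the connecting lemma, and the bridge construction; no ingredient beyond \cite{FK1,FK2,FK3} is needed.
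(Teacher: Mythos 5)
Your overall architecture (random reservoir, long square paths outside it, connections through it, then Theorem~\ref{thm:FK3}) matches the paper's, but the step where you ``upgrade $R$ to a strong reservoir'' is a genuine gap, and it is precisely the obstacle this paper is built around. A $\frac{1}{36}$-extreme set of $G[R]$ is an arbitrary subset $S\subseteq R$ with $|S|\geq(1-\frac{1}{36})\frac{|R|}{3}$ and $\|v,S\|<\frac{1}{36}\cdot\frac{|R|}{3}$ for all $v\in S$; there are exponentially many candidates, so no union bound over Chernoff estimates can exclude them all, and there is no ``pull-back'': such an $S$ has only about $\epsilon n/3$ vertices and comes with no canonical extension to a set of size at least $(1-\alpha)\frac{n}{3}$ in $G$ whose vertices have small degree into it (a vertex of $S$ may have few neighbors in $S$ while having $\frac{2}{3}n$ neighbors in $V(G)\setminus R$). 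The concentration estimates run in the opposite direction: they control $|T\cap R|$ for \emph{predetermined} sets $T\subseteq V(G)$, not for arbitrary subsets of $R$. The paper's solution is to identify the polynomially many \emph{special} sets (Definition~\ref{def:special}) that can actually arise in the connecting argument --- each determined by at most five vertices of $G$ --- and to build a \emph{special} reservoir (Lemma~\ref{reservoir}) in which only these sets are guaranteed non-extreme; the Connecting Lemma (Lemma~\ref{connecting}) is then proved so that it needs nothing more. Without some such device your reservoir is merely weak, and your bridge construction has no justification.

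A second, smaller gap: greedy extension of a square path stalls long before ``only a small fraction of $V(G)\setminus R$ remains uncovered.'' The two endpoints $u,v$ of the current square path have $|N(u,v)|\geq(\frac{1}{3}-2\epsilon)n$, and once the path is longer than that, all of their common neighbors may already lie on the path, so greed alone only yields length about $\frac{n}{3}$. The paper instead takes an \emph{optimal} square path and uses the structural Lemmas~\ref{segment} and~\ref{edgetopath} to prove the Path Cover Lemma (Lemma~\ref{pathcover}): two disjoint square paths cover more than $(\frac{5}{6}-2\epsilon)|G-R|$ vertices, which, combined with the choice of $|R|$, already exceeds $\frac{2}{3}n$ after connecting; full coverage of $V(G)\setminus R$ is neither achieved nor needed, since Theorem~\ref{thm:FK3} absorbs the leftover.
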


Note that if $G$ has an $\alpha$-extreme set $S\subseteq V(G)$ for some $\alpha<\frac{1}{36}$, then $S$ is a $\frac{1}{36}$-extreme set.  This explains why we only consider $\frac{1}{36}$-extreme sets in Theorems \ref{thm:good} and \ref{non-extremal}.

The proof of Theorem~\ref{non-extremal} has three parts. First we
use the Reservoir Lemma (Lemma~\ref{reservoir}) to construct a special reservoir
$R$ with $|R|<\frac{1}{3}n$. Then we use the Path Cover Lemma (Lemma~\ref{pathcover}) to construct two disjoint square
paths $P_{1},P_{2}$ in $G-R$ such that $|P_{1}|+|P_{2}|>\frac{2}{3}n$
using techniques and results from \cite{FK1,FK2}. Finally, we use
the properties of the special reservoir $R$, together with our version
of the Connecting Lemma (Lemma~\ref{connecting}), to connect the ends of
$P_{1}$ to the ends of $P_{2}$ by disjoint square paths in $R$
so as to form a square cycle of length greater than $\frac{2}{3}n$.
Thus by Theorem~\ref{thm:FK3} we obtain a hamiltonian square cycle.

\subsection{Reservoirs and the Connecting Lemma}

The bottleneck in this line of attack is in determining properties for
special reservoirs that are strong enough to prove the Connecting
Lemma, yet weak enough to ensure the existence of special reservoirs
in moderately sized graphs. 
In the process of constructing a connecting square path we need to know that 
certain subsets of the reservoir are nonextreme. Since it is too expensive to ensure that all subsets are nonextreme,
we anticipate a limited collection of  \emph{special} subsets that might appear in this construction, and construct
 a reservoir with no extreme special sets.
\begin{defn}
\label{def:special}A set $S\subseteq V(G)$ is \emph{special} if there
exist (not necessarily distinct) vertices $u,v,w,x,y\in V(G)$ such that $S=(N(u,v,w)\cup N(u,v,x))\cap N(y)$.
\end{defn}

A set $S$ of size at least $(1-\alpha)\frac{n}{3}$ that is not $\alpha$-extreme has at least one vertex with ``large'' degree to $S$, but we will need more than one vertex of ``large'' degree, so we define a more general notion of extremity.

\begin{defn}\label{def:alpha-beta}
Let $G$ be a graph with $n$ vertices.  A set $S\subseteq V(G)$ is $(\alpha,\beta)$-\emph{extreme} if $|S|\geq (1-\alpha+\beta)\frac{n}{3}$ and there are fewer than $\floor{\beta\frac{n}{3}}$ vertices $v\in S$ such that $\|v,S\|\geq \alpha \frac{n}{3}$. 
\end{defn}

So a set $S$ of size at least $(1-\alpha+\beta)\frac{n}{3}$ that is not $(\alpha,\beta)$-extreme has at least $\floor{\beta\frac{n}{3}}$ vertices with ``large'' degree to $S$.  In the non-extremal case we know that $G$ contains no $\alpha$-extreme sets, but we must ensure for the Connecting Lemma that the reservoir has no $(\alpha', \beta')$-extreme special sets.  So we use the following simple observation when constructing the reservoir.

\begin{lem}
\label{nicevertices} Let $G$ be a graph on $n$ vertices and let $\alpha,\beta>0$. If $G$ has no $\alpha$-extreme sets and $S\subseteq V(G)$ with $|S|\geq (1-\alpha+\beta)\frac{n}{3}$, then $S$ is not $(\alpha, \beta)$-extreme. 
\end{lem}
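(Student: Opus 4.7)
The plan is to argue by contradiction. Assume that $G$ has no $\alpha$-extreme set but that the given $S$ with $|S|\ge (1-\alpha+\beta)\frac{n}{3}$ is $(\alpha,\beta)$-extreme, and then produce an $\alpha$-extreme subset of $V(G)$ from $S$ by discarding exactly the vertices of $S$ with ``large'' $S$-degree.

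First I would let $T=\{v\in S : \|v,S\|\ge \alpha\frac{n}{3}\}$, which by the $(\alpha,\beta)$-extremality hypothesis satisfies $|T|<\lfloor\beta\frac{n}{3}\rfloor\le \beta\frac{n}{3}$. Setting $S':=S\setminus T$, the size of $S'$ can be estimated as
\[
|S'| \;=\; |S|-|T| \;>\; (1-\alpha+\beta)\tfrac{n}{3} - \beta\tfrac{n}{3} \;=\; (1-\alpha)\tfrac{n}{3},
\]
so $S'$ already meets the size threshold in the definition of $\alpha$-extreme.

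Next I would verify the degree condition on $S'$. For each $v\in S'$, the definition of $T$ gives $\|v,S\|<\alpha\frac{n}{3}$, and since $S'\subseteq S$ this yields the stronger bound $\|v,S'\|\le \|v,S\|<\alpha\frac{n}{3}$. Therefore $S'$ is $\alpha$-extreme, contradicting our assumption that $G$ has no $\alpha$-extreme set.

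The argument is entirely bookkeeping, so there is no genuine obstacle; the only thing to be careful about is making the arithmetic with the floor in $\lfloor\beta\frac{n}{3}\rfloor$ line up correctly with the $(1-\alpha+\beta)\frac{n}{3}$ size hypothesis so that the shrunken set $S'$ still clears the $(1-\alpha)\frac{n}{3}$ threshold, and the strict/non-strict inequalities above show that this works automatically.
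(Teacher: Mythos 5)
Your proof is correct and is essentially identical to the paper's: both remove the (fewer than $\lfloor\beta\frac{n}{3}\rfloor$) vertices of large $S$-degree and check that the remaining set is $\alpha$-extreme, yielding the contradiction. Nothing further is needed.
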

\begin{proof}
Suppose $S$ is $(\alpha, \beta)$-extreme and let $S'=\{v\in S: \|v, S\|\geq \alpha\frac{n}{3}\}$. Since $S$ is $(\alpha, \beta)$-extreme, we have $|S'|<\floor{\beta\frac{n}{3}}$.  Thus $|S\setminus S'|\geq (1-\alpha)\frac{n}{3}$ and $\|v, S\setminus S'\|<\alpha\frac{n}{3}$ for all $v\in S\setminus S'$, contradicting the fact that $G$ has no $\alpha$-extreme sets.
\end{proof}

Here are the technical definitions of $(\ep, \varrho)$-weak,
$(\alpha, \ep, \varrho)$-strong and $(\alpha, \beta, \ep, \varrho)$-special
reservoir. 
\begin{defn}
[Reservoir] \label{def:Reservoir}Let $G$ be a graph on $n$ vertices. Let $1\geq \varrho\geq 0$ and
$\ep>0$. An \emph{$(\ep, \varrho)$-weak reservoir} is a set $R\subseteq V(G)$
such that $|R|=\ceiling{\varrho n}$ and for all $u\in V(G)$, \[
\left(\frac{d(u)}{n}-\ep\right)|R|\le\|u,R\|\leq\left(\frac{d(u)}{n}+\ep\right)|R|.\]

An \emph{$(\alpha, \ep, \varrho)$-strong reservoir} is an $(\ep,\varrho)$-weak
reservoir $R$ such that $G[R]$ has no $\alpha$-extreme sets.

An \emph{$(\alpha, \beta, \ep,\varrho)$-special reservoir} is an $(\ep, \varrho)$-weak
reservoir $R$ such that for all special sets $S\subseteq V(G)$, $S\cap R$ is not $(\alpha, \beta)$-extreme in $G[R]$.
\end{defn}
A routine application of Chernoff's bound yields $(\ep,\varrho)$-weak
reservoirs $R$ in moderately large graphs. The reason for this is
that we have only polynomially many conditions to preserve. A similar
observation allows us to construct $(\alpha, \beta, \ep,\varrho)$-special
reservoirs. However this standard approach fails for $(\alpha,\ep,\varrho)$-strong
reservoirs, because there are exponentially many conditions to check.

A connecting lemma should state that any two disjoint ordered edges in $V(G)\setminus R$
can be connected by a short square path whose interior vertices are in
$R$. For example, Fan and Kierstead \cite{FK1} proved:
\begin{lem}
If $\delta(G)>\frac{2}{3}|G|$ then there exists a square path connecting
any two disjoint edges.
\end{lem}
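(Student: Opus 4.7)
The plan is to construct a short connecting square path, of length at most six or seven, by choosing each interior vertex from a well-selected common neighborhood. The workhorse is the elementary bound that $\delta(G)>\frac{2}{3}n$ produces: for any two vertices $|N(x)\cap N(y)|\geq 2\delta-n>\frac{n}{3}$, and for any three vertices $|N(x)\cap N(y)\cap N(z)|\geq 3\delta-2n\geq 1$ (using integrality and the strict inequality). The three-wise bound is the reason length-five paths are even a candidate.

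The first attempt would be the length-five path $a_1a_2v_1v_2b_1b_2$. Writing out the square-path conditions, this amounts to choosing $v_1\in A:=N(a_1,a_2,b_1)$ and $v_2\in B:=N(a_2,b_1,b_2)$, distinct from each other and from $\{a_1,a_2,b_1,b_2\}$, with $v_1v_2\in E(G)$. Both $A$ and $B$ are nonempty by the three-wise bound, so the question reduces to producing a usable edge between them. I would first check $A\cap B$: any edge inside $A\cap B$, or any adjacent pair with one endpoint in $A\setminus B$ and the other in $B$, suffices. Since every vertex has fewer than $\frac{n}{3}$ non-neighbors, an edge across $(A,B)$ must exist unless both sets are forced to be very small.

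When the length-five construction is blocked by tightness (both $A$ and $B$ near their minimum size and independent in $G$), I would pass to the length-six template $a_1a_2xyzb_1b_2$, whose square conditions demand that $xyz$ be a triangle with $x\in N(a_1,a_2)$, $y\in N(a_2,b_1)$, and $z\in N(b_1,b_2)$. Each of these three target sets has size exceeding $\frac{n}{3}$. The existence of a triangle with one vertex in each reduces to a double-counting argument: for each candidate $y\in N(a_2,b_1)$, the slices $N(y)\cap N(a_1,a_2)$ and $N(y)\cap N(b_1,b_2)$ are each nonempty by the three-wise bound, and summing the number of edges between them as $y$ varies produces a strictly positive total whenever $\delta>\frac{2}{3}n$.

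The main obstacle is the tightness of the degree hypothesis: the three-wise intersection bound $3\delta-2n$ can be as small as $1$, leaving essentially no slack for avoiding the constantly many forbidden vertices $\{a_1,a_2,b_1,b_2\}$ while simultaneously enforcing all the adjacencies required by the square property. I would handle this by absorbing the forbidden set into extra interior vertices when needed, trading a tight three-wise constraint for a pair of two-wise constraints of size greater than $\frac{n}{3}$, each of which easily tolerates the removal of a bounded forbidden set. The length of the connecting path might then grow to seven, but it remains a small constant, which is all the statement requires.
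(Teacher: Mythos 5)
First, a point of orientation: the paper does not prove this lemma at all --- it is quoted from \cite{FK1} --- and the closest argument it does give is the proof of Lemma~\ref{connecting}, which works under the \emph{weaker} degree bound $\delta\ge(\frac{2}{3}-\ep)n$ but with the crucial extra hypothesis that no special set is $(\alpha,\beta)$-extreme. Your sketch has the right skeleton (try two interior vertices, then three), but the two steps where the tightness of $\delta>\frac{2}{3}n$ actually bites are asserted rather than proved. In the length-five template the sets $A=N(a_1,a_2,b_1)$ and $B=N(a_2,b_1,b_2)$ can each have size exactly $1$ (the bound $3\delta-2n$ really can equal $1$), possibly with $A=B$ a single vertex, so ``an edge across $(A,B)$ must exist unless both sets are very small'' is not a degenerate case to be waved off but precisely the case that must be handled. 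In the length-six template your double count is
$\sum_{y\in N(a_2,b_1)}\left\Vert N(y,a_1,a_2),N(y,b_1,b_2)\right\Vert=\sum_{xz}\left|N(x,z,a_2,b_1)\right|$,
the outer sum on the right ranging over edges $xz$ between $N(a_1,a_2)$ and $N(b_1,b_2)$; the inner term is a four-wise common neighborhood, bounded below only by $4\delta-3n\approx-\frac{n}{3}$ at the threshold, and nothing forces it to be nonzero for even one such edge. So ``a strictly positive total'' is unsupported.

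The closing ``absorb the forbidden vertices by lengthening the path'' idea cannot repair this: in any square path every three consecutive vertices must have a common neighbor (namely the next vertex), so adding interior vertices never converts tight three-wise constraints into two-wise ones --- it multiplies them. This is exactly why the paper's Lemma~\ref{connecting} imports non-extremality: the sets in which it must find edges ($S_4$, $T_1$, $T_2$, $S$, all of the form $(N(u,v,w)\cup N(u,v,x))\cap N(y)$) are guaranteed to be either avoidable or of size roughly $\frac{n}{3}$ and non-extreme, whence \eqref{ne} hands it the required edge. At the bare threshold $\delta>\frac{2}{3}|G|$ with no such hypothesis, those edge-finding steps are the entire content of the Fan--Kierstead lemma, and their published argument is a genuine structural case analysis rather than a short count. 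To complete your proof you would need either to introduce an extremal/non-extremal dichotomy of the paper's kind or to follow the analysis of \cite{FK1} in the case where the triple neighborhoods are small.
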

\noindent In the context of Theorem~\ref{thm:FK1}, $(\ep/2,\varrho)$-weak
reservoirs are sufficient since the degree bounds ensure that $\delta(G[R])>\frac{2}{3}|R|$.
In \cite{LSS,RRS2} the authors prove connecting lemmas for strong
reservoirs. We use a simpler argument and show that it works for special
reservoirs.

\subsection{Optimal paths}

Let $e_{1}:=v_1v_2$ and $e_{2}:=v_{s-1}v_s$ be disjoint ordered edges. A square $(e_{1},e_{2})$-\emph{path} is a square path of the form $v_1v_2\dots v_{s-1}v_s$.
\begin{defn}
An \emph{optimal} square path (or cycle, or $(e_{1},e_{2})$-path)
is a square path (or cycle, or $(e_{1},e_{2})$-path) $P$ such
that among all square paths (or cycles, or $(e_{1},e_{2})$-paths)
(i) $P$ is as long as possible, (ii) subject to (i), $P$ has as
many 3-chords as possible, and (iii) subject to (i) and (ii), $P$
has as many 4-chords as possible. 
\end{defn}
All the work in \cite{FK1,FK2,FK3} starts with lemmas about optimal
square paths. 

\begin{lem}
[Fan-Kierstead \cite{FK1}, \cite{FK2} Lemma 1]\label{segment} Suppose that
$P$ is a square path in a graph $G$ and $v\in V(G-P)$.
If $P$ is an $(e_{1},e_{2})$-optimal square path then $\|v,Q\|\leq\frac{2}{3}|V(Q)|+1$
for every segment $Q$ of $P$. Moreover, if $P$ is an optimal square
path then $\|v,P\|\leq\frac{2}{3}|P|-\frac{1}{3}$ and if $P$ is
an optimal square cycle then $ \|v,P\|\leq\frac{2}{3}|P|+\frac{1}{3}$.
\end{lem}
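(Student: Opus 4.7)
The plan is to proceed by contradiction, using the three-tier optimality of $P$ together with a local exchange argument. Suppose $\|v,Q\|>\frac{2}{3}|V(Q)|+1$ for some segment $Q$ of $P$; write $m:=|V(Q)|$ and index $Q$ as $v_1v_2\dots v_m$. Then the set $\bar N$ of non-neighbors of $v$ in $Q$ satisfies $|\bar N|<\frac{m}{3}-1$. We seek a modification of $P$ that improves it in the lexicographic order (length, then number of 3-chords, then number of 4-chords), contradicting optimality.

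First I would use a pigeonhole argument to locate a long run of consecutive vertices of $Q$ adjacent to $v$. Since the non-neighbors partition $Q$ into at most $|\bar N|+1\le m/3$ runs of neighbors whose total length exceeds $2m/3+1$, the average run length is strictly greater than $2$. A slightly more careful accounting---each non-neighbor destroys only a bounded number of candidate triples or quadruples of consecutive neighbors---yields a run of at least three, and in many cases four, consecutive neighbors of $v$ somewhere in $Q$.

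Next come the exchange operations. If four consecutive vertices $v_{k-1},v_k,v_{k+1},v_{k+2}$ are all adjacent to $v$, then inserting $v$ between $v_k$ and $v_{k+1}$ produces a square $(e_1,e_2)$-path that is one vertex longer, contradicting clause (i). When only a triple $v_{k-1},v_k,v_{k+1}$ of consecutive neighbors is available, one exploits that $\{v_{k-1},v_k,v_{k+1}\}$ already induces a triangle inside the square path, so together with $v$ this set spans a $K_4$; swapping $v$ with one of these three vertices and reinserting the displaced vertex into a nearby insertion slot yields either a longer square path (contradicting (i)), or one of equal length with strictly more 3-chords or 4-chords (contradicting (ii) or (iii)). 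The bookkeeping of which exchange to perform depends on the exact edge pattern near the good run, and the additive ``$+1$'' in the bound is precisely the slack needed so that at least one such exchange is always executable.

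For the moreover statements, when $P$ is an optimal square path we additionally get to use the ends of $P$: if $v\sim v_1$ and $v\sim v_2$ then prepending $v$ already extends $P$, and symmetrically at the other end, which strictly tightens the allowable count of neighbors from $\frac{2}{3}|P|+1$ down to $\frac{2}{3}|P|-\frac{1}{3}$. For optimal square cycles there are no free endpoints, but the closed-up cyclic structure still rules out the very tightest extremal neighbor patterns, giving the intermediate bound $\frac{2}{3}|P|+\frac{1}{3}$. The main obstacle throughout is the delicate case analysis of exchange moves: without access to all three levels of optimality simultaneously one cannot eliminate every bad pattern of $v$'s neighbors, and matching the precise constant $\tfrac23$ with only a $+1$ (or $\pm\tfrac13$) error term forces one to track the triple/quadruple-run structure and the triangle--$K_4$ exchanges in tandem.
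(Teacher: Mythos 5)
Note first that the paper contains no proof of this lemma: it is imported verbatim from Fan and Kierstead \cite{FK1,FK2}, so there is no in-paper argument to compare against, and your proposal must be judged on its own. Its high-level strategy is the right one (and is the Fan--Kierstead strategy): locate a run of consecutive neighbours of $v$ on $Q$ by pigeonhole, then defeat it with an insertion or exchange that improves $P$ in the lexicographic order (length, then 3-chords, then 4-chords). The pigeonhole step is sound --- if $\|v,Q\|>\frac23 m+1$ then the at most $|\bar N|+1<\frac m3$ runs of neighbours cannot all have length at most $2$, so a run of three consecutive neighbours exists --- and the insertion move for a run of \emph{four} consecutive neighbours $v_{k-1},v_k,v_{k+1},v_{k+2}$ is correct and does contradict maximality of length.

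The gap is the run-of-three case, which is the entire content of the lemma, since a run of four is not forced by the density hypothesis (all runs could have length exactly $3$). Your proposed move --- ``swapping $v$ with one of these three vertices and reinserting the displaced vertex into a nearby insertion slot'' --- does not go through as stated: replacing, say, $v_{k-1}$ by $v$ in the square path requires $v$ to be adjacent to $v_{k-3}$ and $v_{k-2}$ as well, which is exactly what you do not know, and the ``nearby insertion slot'' for the displaced vertex in turn requires four consecutive neighbours of \emph{that} vertex, which must be manufactured from the 3-chord/4-chord structure of $P$ itself. That enumeration of edge patterns around a 3-run, exhibiting for each an explicit length- or chord-increasing exchange, is precisely the case analysis constituting the Fan--Kierstead proof, and your writeup explicitly defers it (``the bookkeeping of which exchange to perform depends on the exact edge pattern''), along with the quantitative derivations of the refined constants $-\frac13$ and $+\frac13$ in the ``moreover'' clauses. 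As it stands the proposal is a correct plan of attack rather than a proof.
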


In the extremal case we will take advantage of the following fact.

\begin{cor}\label{3k}
P\'osa's Conjecture is true, if it holds for all $G$ with $|G|$ divisible by $3$. 
\end{cor}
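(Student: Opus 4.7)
My plan is to reduce the non-divisible cases of P\'osa's Conjecture to the divisible case by deleting one or two vertices and then promoting the resulting long square cycle to a hamiltonian one via Theorem~\ref{thm:FK3}. So assume P\'osa's Conjecture for all graphs whose order is divisible by $3$, and let $G$ be a graph with $n = |G|$ and $\delta(G) \geq \frac{2}{3}n$.

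The case $3 \mid n$ is the assumption itself. Suppose $n = 3k+1$. Then $\delta(G) \geq 2k+1$, so deleting any vertex $v$ yields a graph $G - v$ on $3k$ vertices with $\delta(G-v) \geq 2k = \frac{2}{3}|G-v|$; by hypothesis $G - v$ contains a hamiltonian square cycle, which is a square cycle of length $3k$ inside $G$. Since $3k > \frac{2}{3}(3k+1)$ for every $k \geq 1$, Theorem~\ref{thm:FK3} upgrades it to a hamiltonian square cycle of $G$. The case $n = 3k+2$ is the same argument with two deletions: $\delta(G) \geq 2k+2$ yields $\delta(G - \{v_1, v_2\}) \geq 2k = \frac{2}{3} \cdot 3k$, the hypothesis produces a square cycle of length $3k$ in $G$, and $3k > \frac{2}{3}(3k+2)$ whenever $k \geq 2$, so Theorem~\ref{thm:FK3} again closes the argument.

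The only orders for which these parameter windows fail are the small cases $n \in \{1, 2, 5\}$. For $n \leq 2$ no graph satisfies $\delta(G) \geq \frac{2}{3}n$ at all, and $n = 5$ forces $G = K_5$, every cyclic ordering of which is a hamiltonian square cycle. There is no genuine obstacle to this plan; the only point requiring care is the strict inequality $3k > \frac{2}{3}n$ that is needed to fire Theorem~\ref{thm:FK3}, and it is this inequality that in turn singles out exactly which tiny boundary cases must be inspected by hand.
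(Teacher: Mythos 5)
Your reduction is correct: the deletion of $r\in\{1,2\}$ vertices, the computation $\delta(G')\ge 2k=\tfrac{2}{3}|G'|$, and the handling of the boundary cases $n\in\{1,2,5\}$ are all right ($K_5$ is indeed the square of $C_5$). The first half of your argument is exactly the paper's, but you finish differently: you invoke Theorem~\ref{thm:FK3} to upgrade the length-$3k$ square cycle, which costs you the strict inequality $3k>\tfrac{2}{3}n$ and hence the hand-check of small orders, whereas the paper instead takes an \emph{optimal} square cycle $C$ in $G$ (so $|C|\ge 3k$) and applies Lemma~\ref{segment} directly: if some $x\in V(G-C)$ existed, then $\delta(G)\le \|x,C\|+|G|-|C|-1\le |G|-\tfrac{1}{3}|C|-\tfrac{2}{3}\le 2k+r-\tfrac{2}{3}<2k+r\le\delta(G)$, a contradiction valid for every $n$. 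The paper's route is thus uniform in $n$ and uses only the lightweight chord-counting lemma rather than the full strength of Theorem~\ref{thm:FK3} (whose proof itself rests on such lemmas); your route is perfectly valid but slightly heavier machinery plus a finite case analysis.
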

\begin{proof}Suppose $|G|=3k+r$, where
$1\le r\le2$. Let $G'$ be $G$  with $r$ vertices deleted. Then $$\delta(G')\ge\lceil\frac{2}{3}(3k+r)\rceil-r=2k=\frac{2}{3}|G'|.$$
Thus by hypothesis, $G'$ has a hamiltonian square cycle $C'$. So an optimal
square cycle $C$ in $G$ has length at least $3k.$ Suppose $C$
is not hamiltonian in $G$. Then there exists $x\in V(G-C)$. By Lemma~\ref{segment},
we have the following contradiction: \[
2k+r\leq\delta(G)\leq\left\Vert v,C\right\Vert +|G|-|C|-1\le|G|-\frac{1}{3}|C|-\frac{2}{3}\leq 2k+r-\frac{2}{3}.\]
\end{proof}

We will also need:
\begin{lem}
[Fan-Kierstead \cite{FK2}, Lemma 9]\label{edgetopath} Let $P$
be an optimal square path of $G$. Let $xy$ be an edge of $G-P$
such that there are square paths, of at least $q$ vertices, starting
at $xy$ and $yx$ in $G-P$. If $|P|\geq 2q+2$, then $\|xy,P\|\leq\frac{4}{3}|P|-\frac{2}{3}q+2$. 
\end{lem}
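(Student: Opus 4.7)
The plan is to split $P$ into a short ``saving'' prefix $Q$ and the remainder $P'=P-Q$, bound $\|xy,\cdot\|$ separately on each piece, and then sum.

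First I would write $P=v_1v_2\dots v_p$ and let $Q=v_1\dots v_q$, so $|P'|=p-q$. Since $x,y\in V(G-P)$ and $P$ is an optimal square path (hence its segment $P'$ inherits the same kind of extremal behavior), Lemma~\ref{segment} applied to each of $x$ and $y$ on the segment $P'$ gives
\[
\|x,P'\|\le \tfrac{2}{3}|P'|+1,\qquad \|y,P'\|\le \tfrac{2}{3}|P'|+1,
\]
so $\|xy,P'\|\le \tfrac{4}{3}(p-q)+2$. The hypothesis $|P|\ge 2q+2$ is used precisely so that $|P'|\ge q+2$, guaranteeing enough room for the segment argument.

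The crux is to show $\|xy,Q\|\le \tfrac{2}{3}q$, i.e., essentially half the generic bound, by exploiting the two square tails. Let $P_{xy}$ and $P_{yx}$ be the given square paths in $G-P$ of at least $q$ vertices starting at $xy$ and $yx$ respectively. Suppose, for contradiction, $\|xy,Q\|$ exceeds $\tfrac{2}{3}q$ (by more than a small constant). I would locate, by a pigeonhole/parity argument over consecutive triples $(v_{i-1},v_i,v_{i+1})$ in $Q$, an index $i$ at which the edges $\{x,y\}\!\to\! \{v_i,v_{i+1}\}$ form a ``splice pattern'' compatible with the square-path adjacency conditions: specifically one of $yv_i,xv_i,xv_{i+1}$ or the mirror pattern $xv_i,yv_i,yv_{i+1}$ is present. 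In the first case, the concatenation
\[
P^*\;:=\;(\text{reverse of }P_{xy})\,v_i\,v_{i+1}\,\dots\,v_p
\]
is a square path, because the reverse of $P_{xy}$ ends $\dots z_2\,z_1\,y\,x$ and the splice triples $(z_1,y,x)$, $(y,x,v_i)$, $(x,v_i,v_{i+1})$ are all edges by construction and by $P$'s own square structure. In the mirror case we use $P_{yx}$ instead---this is why both tails are in the hypothesis. Since $P_{xy}$ has at least $q$ vertices and we removed only $i-1\le q-1$ vertices from $P$, the path $P^*$ is strictly longer than $P$, contradicting optimality.

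Putting Steps 1 and 2 together yields
\[
\|xy,P\|\;=\;\|xy,Q\|+\|xy,P'\|\;\le\;\tfrac{2}{3}q+\tfrac{4}{3}(p-q)+2\;=\;\tfrac{4}{3}|P|-\tfrac{2}{3}q+2,
\]
as required. The main obstacle will be the bookkeeping in Step 2: making the ``splice pattern'' argument quantitatively tight so that exceeding $\tfrac{2}{3}q$ really does produce one of the two allowed configurations. This amounts to the same style of three-in-a-row counting used to prove Lemma~\ref{segment}, adapted so that an adjacent pair---rather than a single vertex---drops onto a compatible triple of $Q$. I expect the two directions of the tail ($P_{xy}$ versus $P_{yx}$) are exactly what handles the two orientations that appear in the pigeonhole, and this is why the hypothesis insists on square paths starting from both $xy$ and $yx$.
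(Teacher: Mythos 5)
The paper does not prove this lemma itself; it is imported from Fan--Kierstead \cite{FK2}, so I can only judge your argument on its own terms. Your Step 1 and the final arithmetic are fine: $\frac{2}{3}q+\frac{4}{3}(p-q)+2=\frac{4}{3}|P|-\frac{2}{3}q+2$, and Lemma~\ref{segment} does give $\|xy,P'\|\le\frac{4}{3}|P'|+2$ on the segment $P'$. The splice mechanics in Step 2 are also correct as far as they go: attaching the reversed tail $P_{xy}$ to the suffix $v_iv_{i+1}\dots v_p$ requires exactly $xv_i,yv_i,xv_{i+1}\in E$, the mirror splice with $P_{yx}$ requires $xv_i,yv_i,yv_{i+1}$, and since $|P_{xy}|,|P_{yx}|\ge q$ either splice beats $P$ whenever $i\le q$.

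The gap is that these two forbidden configurations cannot force $\|xy,Q\|\le\frac{2}{3}q$. Both require $v_i\in N(x)\cap N(y)$, so all optimality yields is: for $i\le q$, if $v_i$ is adjacent to both $x$ and $y$, then $v_{i+1}$ is adjacent to neither. Writing $d_i:=\|xy,\{v_i\}\|\in\{0,1,2\}$, this says $d_i=2\Rightarrow d_{i+1}=0$, and under that constraint alone $\sum_{i\le q}d_i$ can still be as large as $q+1$; in particular the configuration $d_i=1$ for all $i$ (say $x$ adjacent to every vertex of $Q$ and $y$ to none) is completely untouched by your argument and already gives $\|xy,Q\|=q>\frac{2}{3}q$. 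Even granting the bound $\|xy,Q\|\le q+1$ that your constraints do imply, the decomposition only yields $\|xy,P\|\le\frac{4}{3}|P|-\frac{1}{3}q+3$, short of the lemma by $\frac{1}{3}q$. So Step 2 is not deferred bookkeeping: the target inequality is false as a consequence of the two splices you identify, and recovering the full $\frac{2}{3}q$ saving requires additional forbidden configurations. The hypothesis $|P|\ge 2q+2$ is a strong hint that one should work at \emph{both} ends of $P$ (the first and last $q$ vertices each contributing a saving), and combine the tail splices with the insertion configurations that underlie Lemma~\ref{segment}, rather than extract the whole saving from a single prefix.
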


\subsection{Probability}

If $X$ is a random variable with hypergeometric distribution (and 
our experiment consists of drawing $n$ items from a collection of
$N$ total items, $m$ of which are good and $N-m$ of which are bad)
the expected value of $X$ is given by \[
\mathbb{E}X=\sum_{k=0}^{n}k\cdot Pr(X=k)=\sum_{k=0}^{n}k\cdot\frac{\binom{m}{k}\binom{N-m}{n-k}}{\binom{N}{n}}=\frac{nm}{N}.\]

\begin{thm}
[Chernoff's bound \cite{Ch,JLR}]\label{Chernoff} Let $X$ be a random variable
with binomial or hypergeometric distribution. Then the following hold:\end{thm}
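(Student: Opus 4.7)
The plan is to prove the standard Chernoff tail bounds by the exponential Markov (Bernstein) trick for the binomial case, and then reduce the hypergeometric case to the binomial one. Explicitly, writing $\mu=\mathbb{E}X$, the goal is to establish bounds of the form
\[
\Pr(X\ge (1+\delta)\mu)\le e^{-\delta^2\mu/3}\qquad\text{and}\qquad \Pr(X\le (1-\delta)\mu)\le e^{-\delta^2\mu/2}
\]
for $0<\delta<1$ in both settings.

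For the binomial case, write $X=\sum_{i=1}^n X_i$ where the $X_i$ are independent Bernoulli variables with $\Pr(X_i=1)=p_i$. For any $t>0$, Markov's inequality gives
\[
\Pr(X\ge a)=\Pr(e^{tX}\ge e^{ta})\le e^{-ta}\,\mathbb{E}[e^{tX}]=e^{-ta}\prod_{i=1}^n\mathbb{E}[e^{tX_i}].
\]
Each factor satisfies $\mathbb{E}[e^{tX_i}]=1+p_i(e^t-1)\le \exp(p_i(e^t-1))$, so $\mathbb{E}[e^{tX}]\le \exp(\mu(e^t-1))$. Taking $a=(1+\delta)\mu$ and optimizing $t=\ln(1+\delta)$, the standard estimate $(1+\delta)\ln(1+\delta)-\delta\ge \delta^2/3$ on $(0,1)$ yields the upper tail bound. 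The lower tail is handled symmetrically with $t<0$, using $(1-\delta)\ln(1-\delta)+\delta\ge \delta^2/2$.

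For the hypergeometric case, we use Hoeffding's observation that if $X$ counts the number of good items in a sample of size $n$ drawn without replacement from a population of size $N$ containing $m$ good items, and $Y\sim\mathrm{Bin}(n,m/N)$, then $\mathbb{E}[f(X)]\le \mathbb{E}[f(Y)]$ for every convex $f$. Applying this with $f(x)=e^{tx}$ shows that the moment generating function of $X$ is bounded by that of $Y$, so the same Bernstein argument goes through with the identical bounds and identical mean $\mu=nm/N$. The main obstacle is precisely this comparison step: its proof requires expressing the hypergeometric distribution as an average of products of Bernoullis (or, equivalently, representing sampling without replacement as a martingale) so that Jensen's inequality can be invoked term by term. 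Once this comparison is in hand, the rest of the argument is the standard optimization of the exponential moment.

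Since this theorem is invoked only as a black-box concentration tool and is attributed to the references \cite{Ch,JLR}, in the actual writeup one can dispose of the proof by citing these sources and merely verifying that the hypotheses match the situations (polynomially many events, each of moderate expected value) in which the lemma is applied throughout Section~\ref{sec:nex}.
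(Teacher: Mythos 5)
The paper offers no proof of this theorem at all: it is stated as a known result and attributed to \cite{Ch,JLR}, exactly as your closing paragraph suggests is appropriate. Your sketch is the standard argument from those sources (exponential Markov/Bernstein bound for the binomial case, plus Hoeffding's convex-ordering comparison $\mathbb{E}f(X)\le\mathbb{E}f(Y)$ to transfer the moment-generating-function estimate to the hypergeometric case), so there is no conflict of method. One caveat worth flagging: the bounds you set out to prove, $\Pr(X\ge(1+\delta)\mu)\le e^{-\delta^2\mu/3}$ for $0<\delta<1$ via the estimate $(1+\delta)\ln(1+\delta)-\delta\ge\delta^2/3$, are weaker than part (1) of the theorem as stated, which has the Bernstein-type denominator $2(\mathbb{E}X+t/3)$ and holds for all $t\ge0$. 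To recover part (1) exactly you need the sharper elementary inequality $(1+\delta)\ln(1+\delta)-\delta\ge\frac{\delta^2}{2(1+\delta/3)}$ (valid for all $\delta\ge0$); part (3) then follows from parts (1) and (2) precisely on the range $0<\gamma\le 3/2$, since $2(1+\gamma/3)\le 3$ there. This matters mildly because the Reservoir Lemma invokes part (1) in its stated form, but it does not change the structure of your argument.
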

\begin{enumerate}
\item $Pr(X\geq\mathbb{E}X+t)\leq\exp\left(-\frac{t^{2}}{2(\mathbb{E}X+t/3)}\right),~~t\geq0$;
\item $Pr(X\leq\mathbb{E}X-t)\leq\exp\left(-\frac{t^{2}}{2\mathbb{E}X}\right),~~t\geq0$;
\item If $0<\g\leq3/2$, then $Pr(|X-\mathbb{E}X|\geq\g\mathbb{E}X)\leq2\exp\left(-\frac{\g^{2}}{3}\mathbb{E}X\right)$.
\end{enumerate}

\section{Non-extremal case\label{sec:nex}}

In this section we prove Theorem \ref{non-extremal}.  We have compromised optimality somewhat in our constructions 
 and calculations in favor of clarity of exposition.  For instance, we know how to reduce $n_0$ by a factor of $2$.  That being said, we can make the reservoir lemma slightly simpler and we can choose ``nicer'' constants throughout the non-extremal case at the cost of a factor of $3$ in $n_0$.

We first show that if $H$ is a graph with no $(\alpha,\beta)$-extreme special
sets whose minimum degree is almost $\frac{2}{3}|H|$, then any two
disjoint edges in $H$ can be connected by a short square path.  Let $xy\in E(H)$; we say that $P\{xy\}Q$ is a square path if one of $PxyQ$ or $PyxQ$ is a square path.

\begin{lem}
[Connecting Lemma] \label{connecting} Let $0<\beta<\alpha\leq\frac{1}{36}$,
$0<\ep\leq\frac{\alpha-\beta}{15.1}$, $l:=10$ and suppose $n\geq\max\{\frac{660}{\ep},\frac{69}{\beta}\}$.
Let $H=(V,E)$ be a graph on $n$ vertices with no $(\alpha,\beta)$-extreme
special sets such that $\delta(H)\geq(\frac{2}{3}-\ep)n$. Let $L\subseteq V$
such that $|L|\leq l$. If $ab$, $cd$ are any two disjoint
ordered edges in $H-L$, then there is a square $(ab, cd)$-path $P$
of order at most $14$ for which $V(P)\subseteq V\setminus L$. \end{lem}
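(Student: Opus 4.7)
The plan is to build a square $(ab, cd)$-path of order $10$,
\[
a,\ b,\ p_3,\ p_4,\ m_1,\ m_2,\ q_4,\ q_3,\ c,\ d,
\]
(extending to order $12$ or $14$ if more flexibility is required). First I would pick the forward segment $a, b, p_3, p_4$ and symmetrically the backward segment $q_4, q_3, c, d$: choose $p_3 \in N(a, b) \setminus L$ and $p_4 \in N(b, p_3) \setminus (L \cup \{a\})$, and analogously $q_4, q_3$, avoiding $L$ together with the previously chosen vertices. Each of these candidate sets is a common neighborhood of size at least $(\frac{1}{3} - 2\epsilon)n$ by $\delta(H) \geq (\frac{2}{3} - \epsilon)n$, which dwarfs $|L| + O(1) \leq 11$, so many choices are available.

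Completing the path reduces, upon unwinding the square-path constraints, to finding $m_1 \in A := N(p_3, p_4, q_4)$ and $m_2 \in B := N(p_4, q_3, q_4)$ with $m_1 m_2 \in E(H)$. Setting $u := p_4$, $v := q_4$, $w := p_3$, $x := q_3$, and $y := u$, we note $A, B \subseteq N(u) = N(y)$, so
\[
S := (N(u,v,w) \cup N(u,v,x)) \cap N(y) = A \cup B
\]
is a special set in $H$ in the sense of Definition~\ref{def:special}, and the hypothesis that $H$ has no $(\alpha, \beta)$-extreme special sets now applies.

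Provided the first step can be arranged so that $|S| = |A \cup B| \geq (1 - \alpha + \beta)\frac{n}{3}$, non-extremeness furnishes at least $\lfloor \beta \frac{n}{3} \rfloor$ vertices $z \in S$ with $\|z, S\| \geq \alpha \frac{n}{3}$. A double-counting argument on the at least $\lfloor \beta \frac{n}{3} \rfloor \cdot \alpha \frac{n}{3}$ edges from this high-degree set into $S$, measured against the ``bad'' edges lying entirely inside $A \setminus B$ or entirely inside $B \setminus A$, should produce an edge $m_1 m_2$ of the required bipartite type. Ensuring $|S|$ is large is done by averaging over the large first-step candidate sets for $p_3, p_4, q_3, q_4$; the slack $\epsilon \leq (\alpha - \beta)/15.1$ in the hypothesis provides exactly the numerical margin these estimates need, and the parameter gap $\alpha - \beta$ supplies the margin for the subsequent counting.

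The hard part will be the tension between these two demands: making $|A \cup B|$ large enough to invoke the special-set hypothesis while simultaneously controlling $|A \cap B|$, $|A \setminus B|$, and $|B \setminus A|$ well enough to extract the desired bipartite edge. If the order-$10$ path cannot be closed for a particular unlucky configuration, inserting additional vertices $p_1, p_2$ and $q_1, q_2$ at the ends (lengthening the path up to order $12$ or $14$) enlarges the parameter pool defining the special set and provides further room to force the required size estimates. Reconciling these ingredients through the tight parameter constraints $\alpha \leq \frac{1}{36}$, $\beta < \alpha$, and $\epsilon \leq (\alpha-\beta)/15.1$, and carrying out the two averaging/counting arguments cleanly, is the bulk of the work.
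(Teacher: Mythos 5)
Your overall skeleton matches the paper's: route the connection through common neighborhoods of the path's penultimate vertices, observe that the relevant candidate set is special, and use the no-$(\alpha,\beta)$-extreme-special-sets hypothesis to extract an edge. But the step you yourself flag as ``the hard part'' is a genuine gap, and neither of the fixes you sketch closes it. First, $|A\cup B|\ge(1-\alpha+\beta)\frac{n}{3}$ is not something you can simply ``arrange by averaging'': since $A\cup B=\left(N(p_4)\cap N(q_4)\right)\cap\left(N(p_3)\cup N(q_3)\right)$, inclusion-exclusion from $\delta(H)\ge(\frac{2}{3}-\ep)n$ gives only the vacuous bound $|A\cup B|\ge -3\ep n$; your sets insist on adjacency to \emph{both} $p_4$ and $q_4$, whereas the paper works with the larger sets $T_1\cup T_2=S_3\cup S_4$ (all vertices adjacent to at least three of the four inner endpoints), for which the degree-sum count over $\sum i|S_i|$ genuinely forces size about $\frac{2}{3}n$. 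Second, and more seriously, even when $S=A\cup B$ is large, non-extremeness only supplies $\lfloor\beta\frac{n}{3}\rfloor$ vertices with $\ge\alpha\frac{n}{3}$ neighbors \emph{somewhere in $S$}; this is perfectly consistent with $E(A,B)=\emptyset$ (take $A\setminus B$ and $B\setminus A$ each internally dense with no cross edges and $A\cap B$ small), so no double count against ``bad'' internal edges can manufacture the bipartite edge $m_1m_2$ your template requires.

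The paper's proof is precisely a case analysis on these failure modes, and each case uses a different \emph{middle} segment rather than buffer vertices at the ends. If $A\cap B$ (the paper's $S_4$) is large, the edge is found inside $S_4$ or from $S_4$ into $S_3$. If both sides are large but there is no cross edge, the paper finds edges $a''b''\in E(T_1)$ and $c''d''\in E(T_2)$ and then exploits the \emph{absence} of cross edges: each of $a'',b'',c'',d''$ must then place almost all of its remaining degree in $U=V\setminus(T_1\cup T_2)$, which has size only about $\frac{n}{3}$, forcing a large common neighborhood of all four and reducing to the first case with a longer (up to six-vertex) middle segment — whence the bound $14$. If one side is essentially empty, an asymmetric three-vertex segment $xyz$ with $x\in N(a',b')$ and $yz$ an edge of the special set $T_2\cap N(x)$ is used instead. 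Your suggestion to insert $p_1,p_2,q_1,q_2$ at the ends enlarges nothing relevant to the middle connection; the lengthening must occur between the two halves, and the key structural deduction (no cross edges forces the new endpoints' neighborhoods to concentrate outside $T_1\cup T_2$) is the idea missing from your proposal.
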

\begin{proof}

Let $ab$, $cd$ be disjoint ordered edges in $H-L$ and set $A:=\{a,b,c,d\}$.
Here is our plan. First (a) we find disjoint edges $a'b',c'd'$
in $H-L-A$ such that $\|ab,a'b'\|=4=\|cd,c'd'\|$. Then, setting $A':=\{a',b',c',d'\}$,
(b) we construct a square path $\{a'b'\}Q\{c'd'\}$ with $Q\subseteq H':=H\setminus(L\cup A\cup A')$
connecting the unordered edges $a'b',c'd'$. This will yield a square
path $ab\{a'b'\}Q\{c'd'\}cd$, where the order of $\{a'b'\}$ and $\{c'd'\}$
is determined by $Q$.

Let $M\subseteq V$ with $|M|\le l+12$. We will often use the following
statement: \begin{equation}
\textrm{If \ensuremath{S} is a special set with \ensuremath{|S|\ge(1-\alpha+\beta)\frac{n}{3}} then \ensuremath{\|S\setminus M\|>0}.}\label{ne}\end{equation}
To see this, note that since $S$ is not $(\alpha,\beta)$-extreme and $n\geq \frac{69}{\beta}$, $S$ has at least $\floor{\beta\frac{n}{3}}>l+12$ vertices with degree at least $\alpha\frac{n}{3}>l+12$.

Consider the special set $N(a,b)=(N(a,a,a)\cup N(a,a,a))\cap N(b)$.
Since $\delta(H)\geq(\frac{2}{3}-\ep)n$, we have \[
|N(a,b)|\geq(1-6\ep)\frac{n}{3}\geq(1-\alpha+\beta)\frac{n}{3}.\]
 By \eqref{ne}, there exists $a'b'\in E(N(a,b)\setminus(L\cup A))$.
Likewise there is an edge $c'd'\in E(N(c,d)\setminus(\{a',b'\}\cup L\cup A))$,
completing the first goal (a).

Next we show (b). Let $V':=V(H')$. Then $|V'|\ge n-l-8$. We must
construct $Q\subseteq H'$. For $i\in[4]$, let $S_{i}:=S_i(A')=\{v\in V:\|v,A'\|=i\}$.
Then 
\begin{equation}
\frac{8}{3}n-4\ep n=4(\frac{2}{3}-\ep)n\le\|A',V\|  =\sum_{i\in[4]}i|S_{i}|\le4|S_{4}|+3|S_{3}|+2(n-|S_{4}|-|S_{3}|),\label{edges1}
\end{equation}
which gives
\begin{equation}
2|S_{4}|+|S_{3}|\geq \frac{2}{3}n-4\ep n.\label{edges2}
\end{equation}
\textbf{Case 1:} $|S_{4}|>l+12$. Looking ahead to an application
in Case 2.a, we will construct $Q\subseteq H'':=H'-A''$, for any fixed
$4$-set $A''$. Set $V'':=V(H'')$. By the case assumption, there exists $x\in S_{4}\cap V''$.
If there exists $u\in N(x)\cap(S_{4}\cup S_{3})\cap V''$ then set
$Q:=\{xu\}$. Otherwise, $|S_{4}|+|S_{3}|\le\frac{1}{3}n+\ep n+l+12$,
since $d(x)\ge\frac{2}{3}n-\ep n$. Thus by \eqref{edges2}, and using
$\alpha-\beta\ge15.1\ep$ and $n\geq \frac{660}{\ep}$, we have \[
|S_{4}|\ge\frac{1}{3}n-5\ep n-l-12\ge(1-\alpha+\beta)\frac{n}{3}.\]
 Moreover, $S_{4}=N(a',b',c',d')=(N(a',b',c')\cup N(a',b',c'))\cap N(d')$
is special. Thus by \eqref{ne}, there exists an edge $uv\in S_{4}\cap V''$,
and we set $Q:=uv$.

\noindent \textbf{Case 2:} $|S_{4}|\le l+12$.  Let 
\begin{align}
T_{1} & :=\{v\in S_{3}\cup S_4:\|v,\{a',b'\}\|=2\}=(N(a',b',c')\cup N(a',b',d'))\cap N(a')\textrm{ and }\\
T_{2} & :=\{v\in S_{3}\cup S_4:\|v,\{c',d'\}\|=2\}=(N(c',d',a')\cup N(c',d',b'))\cap N(c').
\end{align}
Then $T_1$ and $T_2$ are both special sets.  Note that $S_3$ is partitioned as $(T_1\setminus S_4)\cup (T_2\setminus S_4)$ and $T_1\cap T_2=S_4$.  By \eqref{edges2} and the fact that $|T_1|+|T_2|=|S_3|+2|S_4|$, we have
\begin{equation}
|T_1|+|T_2|\geq \frac{2}{3}n-4\ep n \label{T12}.
\end{equation}
Without loss of generality, $|T_{1}|\leq|T_{2}|$, and so $T_{2}\neq\emptyset$.  Finally, note that by \eqref{edges2} and the case assumption we have,
\begin{equation}
|T_1\cup T_2|=|S_{3}\cup S_4|\geq\frac{2}{3}n-4\ep n-l-12.\label{S3L}\end{equation}

\noindent \textbf{Case 2.a:} $|T_{1}|>l+8$. If there exists $xy\in E(T_{1},T_{2})\cap E(H')$,
then set $Q:=xy$. Otherwise, let $x\in T_{1}\cap V'$. Then using,
in order, $d(x)\geq(\frac{2}{3}-\ep)n$, \eqref{T12}, $\alpha-\beta\ge15.1\ep$ and $n\geq \frac{660}{\ep}$
we have \begin{equation}
\frac{n}{3}+\ep n+l+8\ge|T_{2}|\ge|T_{1}|\ge\frac{n}{3}-5\ep n-l-8\ge(1-\alpha+\beta)\frac{n}{3}.\label{T2}\end{equation}

By \eqref{ne} and \eqref{T2}, there exist edges $a''b''\in E(T_{1})$
and $c''d''\in E(T_{2})$ such that $A'':=\{a'',b'',c'',d''\}$ is disjoint from $L\cup A\cup A'$.  Note that $A''\cap S_4=\emptyset$, since $E(T_{1},T_{2})\cap E(H')=\emptyset$ as mentioned above.

Set $U:=V\setminus (T_1\cup T_2)$. By \eqref{S3L}, \begin{equation}
|U|=n-|T_1\cup T_2|\le\frac{n}{3}+4\ep n+l+12.\label{U}\end{equation}

By \eqref{T2}, for any $x\in A''$, \begin{equation}
\|x,U\|\ge\frac{2}{3}n-\ep n-|T_{2}|\ge\frac{n}{3}-2\ep n-l-8.\label{dU}\end{equation}
 By \eqref{U}, \eqref{dU}, and $n\geq\frac{660}{\ep}$, we have $\overline{\|x,U\|}\le6\ep n+3l+32<\frac{1}{5}|U\cap V''|$.
Thus there exist more than $l+12$ vertices in $S_4(A'')$.
Thus by Case 1, there exists a square path $Q:=\{a''b''\}Q'\{c''d''\}$ with $|Q'|\le 2$.

\noindent \textbf{Case 2.b:} $|T_{1}|\le l+8$. Then $|T_{2}|\geq\frac{2}{3}n-4\varepsilon n-l-8$ by \eqref{T12}.
Let $x\in N(a',b')\cap V'$, and note that $S:=T_{2}\cap N(x)=(N(a',c',d')\cup N(b',c',d'))\cap N(x)$
is a special set. Moreover by $\alpha-\beta\ge15.1\ep$ and $n\geq \frac{660}{\ep}$ we have\[
|S|\ge|T_{2}|+|N(x)|-n\ge\frac{n}{3}-5\ep n-l-8\ge(1-\alpha+\beta)\frac{n}{3}.\]
Thus by \eqref{ne}, there exists an edge $yz\in E(S\cap V')$. Let
$Q:=xyz$.

\noindent 
\end{proof}

Now we prove the reservoir lemma.

\begin{lem}[Reservoir Lemma] \label{reservoir} 
Let $\alpha\geq \frac{1}{36}$, $c\geq\frac{1}{14}$,  $\alpha':=(1-3c)\alpha$, $\beta':=c\alpha$, $\ep\geq \frac{\alpha'-\beta'}{15.1}$, $\varrho\geq 
1-\frac{2/3+\ep}{5/6-2\ep}$ and $n\geq n_{0}:=\n$. If $H$ is a
graph on $n$ vertices such that $\delta(H)\geq \frac{2}{3}n$ and $H$ contains 
no $\alpha$-extreme sets, then $H$ contains an $(\alpha', \beta', \ep, \varrho)$-special
reservoir. \end{lem}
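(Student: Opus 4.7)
The plan is to choose $R$ uniformly at random from the subsets of $V(H)$ of size $\lceil \varrho n\rceil$ and show that with positive probability $R$ satisfies all three requirements of an $(\alpha',\beta',\ep,\varrho)$-special reservoir. For any fixed $X\subseteq V(H)$, $|R\cap X|$ is hypergeometrically distributed with mean $|X|\,|R|/n$, so Chernoff's inequality (Theorem \ref{Chernoff}) controls every count of interest. The crucial structural input is Definition \ref{def:special}: each special set is determined by a 5-tuple $(u,v,w,x,y)$, so there are at most $n^5$ special sets, polynomial in $n$.

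The proof would Chernoff-and-union-bound over four classes of events. First, for each of the $n$ vertices $u$, $\|u,R\|$ deviates from $d(u)|R|/n$ by at most $\ep|R|$; this is exactly the weak-reservoir conclusion. Second, for each of the at most $n^5$ special sets $S$, $|S\cap R|$ is close to $\varrho|S|$ (used to lower-bound $|S|$ whenever $|S\cap R|$ is large). Third, for each special set $S$, $|G(S)\cap R|$ is close to $\varrho|G(S)|$, where $G(S):=\{v\in S:\|v,S\|_H\geq\alpha n/3\}$. Fourth, for each of at most $n^6$ pairs $(S,v)$ with $S$ special and $v\in V$, $\|v,S\cap R\|$ is close to $\varrho\|v,S\|$. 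Each single event fails with probability at most $2\exp(-\Omega(c^2\alpha\varrho n))$ once we ask for deviations equal to a constant fraction (e.g.\ $c$ or $3c$) of the mean, so a union bound beats the $O(n^6)$ total number of events for $n\geq n_0=\n$.

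Conditioning on all of these events holding, I would verify the special-reservoir property as follows. Fix any special set $S$ with $|S\cap R|\geq(1-\alpha'+\beta')|R|/3$. Since $1-\alpha'+\beta'=1-\alpha+4c\alpha$, the concentration of $|S\cap R|$ forces $|S|\geq(1-\alpha+\beta^{*})n/3$ for $\beta^{*}:=3c\alpha$, leaving a buffer $c\alpha$ for sampling error. Since $H$ has no $\alpha$-extreme sets, Lemma \ref{nicevertices} then yields $|G(S)|\geq\floor{\beta^{*} n/3}=\floor{c\alpha n}$, and the concentration of $|G(S)\cap R|$ delivers at least $\floor{\beta'|R|/3}$ good vertices in $R$, with roughly a factor-of-three cushion since $\varrho\beta^{*} n/3\approx\beta^{*}|R|/3=3\beta'|R|/3$. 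Finally, for each such $v$, the concentration of $\|v,S\cap R\|$ turns $\|v,S\|_H\geq\alpha n/3$ into $\|v,S\cap R\|_{H[R]}\geq\alpha'|R|/3$, using precisely the gap $\alpha-\alpha'=3c\alpha$. Thus $S\cap R$ is not $(\alpha',\beta')$-extreme in $H[R]$, as required.

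The main obstacle is numerical rather than structural. The Connecting Lemma (Lemma \ref{connecting}) forces $\ep$ to be essentially $(\alpha'-\beta')/15.1=(1-4c)\alpha/15.1$, which is tiny given $c\geq\tfrac{1}{14}$ and $\alpha\geq\tfrac{1}{36}$. All Chernoff deviations must therefore be scheduled inside the buffers $\alpha-\alpha'=3c\alpha$ and $\alpha-\alpha'+\beta'=4c\alpha$, while the Chernoff exponents of order $c^2\alpha\varrho n$ must dominate $\log(n^6)\approx 6\log n$. The stated lower bounds on $c$, $\alpha$, $\varrho\geq 1-(2/3+\ep)/(5/6-2\ep)$, and $n_0=\n$ are exactly what is needed to make this balance go through, with only modest slack at the boundary.
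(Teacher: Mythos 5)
Your proposal follows essentially the same route as the paper's proof: a uniformly random $\lceil\varrho n\rceil$-set, Chernoff plus a union bound over the $n$ vertices, the at most $n^5$ special sets, and the at most $n^6$ pairs $(S,v)$, followed by the same chain of deductions ($|S\cap R|$ large $\Rightarrow$ $|S|\geq(1-\alpha'-\beta')\frac{n}{3}$ $\Rightarrow$ Lemma~\ref{nicevertices} supplies many high-degree vertices whose counts and degrees survive the sampling). One quantitative caution: for the events controlling $|S\cap R|$ versus $\varrho|S|$, the admissible relative deviation is only about $\gamma=2\beta'/(1-\alpha'-\beta')\approx 2c\alpha$, not a constant fraction like $c$, so the exponent there is $\Theta(c^2\alpha^2\varrho n)$ --- roughly $100$ at $n=\n$ against $\log(3n^5)\approx 97$ --- and it is this event class, not the degree-concentration one, that is the binding constraint forcing $n_0=\n$; your stated $\Omega(c^2\alpha\varrho n)$ overestimates that exponent by a factor of $\alpha^{-1}=36$.
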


\begin{proof}
Let $\gamma:=\frac{2\beta'}{1-\alpha'-\beta'}$. We will show that there
exists a set $R\subseteq V(H)$ such that $|R|=\ceiling{\varrho n}$ which satisfies the following three
properties.
\begin{enumerate}
\item[(i)] For all $u\in V(H)$, $\left(\frac{d(u)}{n}-\ep\right)|R|\leq\|u,R\|\leq\left(\frac{d(u)}{n}+\ep\right)|R|$.
\item[(ii)] For all special sets $S\subseteq V(H)$, if $|S|\geq(1-\alpha'+\beta')\varrho\frac{ n}{3}$, then 
$|S\cap R|\leq 1.05\varrho|S|$ and for all special sets $S\subseteq V(H)$, if $|S\cap R|\geq(1-\alpha'+\beta')\varrho\frac{ n}{3}$, then $|S\cap R|\leq (1+\gamma)\varrho|S|$.
\item[(iii)] For all special sets $S\subseteq V(H)$, if $|S|\geq(1-\alpha'-\beta')\frac{n}{3}$, then 
there exists a set $T'\subseteq S\cap R$ such that $|T'|\geq \beta'\varrho\frac{n}{3}$
and $\|z,S\cap R\|\geq \alpha'\varrho\frac{n}{3}$ for all $z\in T'$. \end{enumerate}
Then we will show that these three properties imply that $R$ is an 
$(\alpha', \beta', \ep, \varrho)$-special reservoir.

Let $R\subseteq V(H)$ be a set of size $\ceiling{\varrho n}=:r$ chosen
at random from all $\binom{n}{r}$ possibilities. There are five calculations that follow.  In each of these calculations we will need $n$ to be large, specifically $n\geq \n$ is large enough.

Let $u\in V(H)$. The expected value of $\|u,R\|$ is $\frac{rd(u)}{n}\geq\varrho d(u)$.
So by Theorem \ref{Chernoff}.3, we have \begin{align*}
Pr\left(\left|\|u,R\|-\frac{rd(u)}{n}\right|\geq\frac{\ep n}{d(u)}\frac{rd(u)}{n}\right)\leq2\exp\left(-\frac{(\frac{\ep n}{d(u)})^{2}}{3}\frac{rd(u)}{n}\right)\leq2\exp\left(\frac{-\ep^{2}\varrho n^2}{3d(u)}\right)<\frac{1}{3n}.\end{align*}
There are $n$ vertices in $V(H)$. So by applying Boole's inequality, the probability that there exists a vertex which does not satisfy property (i) is less than $1/3$.

Let $S\subseteq V(H)$ be a special set such that $|S|\geq(1-\alpha'+\beta')\varrho\frac{ n}{3}$. 
The expected value of $|S\cap R|$ is $\frac{r|S|}{n}\geq\varrho|S|\geq (1-\alpha'+\beta')\varrho^2\frac{ n}{3}$.
So by Theorem \ref{Chernoff}.1, we have
\begin{align*}
\log Pr(|S\cap R|\geq1.05\frac{r|S|}{n})\leq-\frac{(.05\varrho|S|)^{2}}{2(\varrho|S|+.05\varrho|S|/3)}\leq-\frac{.0025\varrho^{2}(1-\alpha'+\beta')}{2(1+.05/3)}\frac{n}{3}<\log\frac{1}{9n^{5}}.\end{align*}
So with high probability, \begin{equation}\label{WHP}|S\cap R|\leq 1.05\varrho |S|  \text{ for all } S\subseteq V(H) \text{ such that } |S|\geq(1-\alpha'+\beta')\varrho\frac{ n}{3} .\end{equation}  Now let $S\subseteq V(H)$ be a special set such that $|S\cap R|\geq(1-\alpha'+\beta')\varrho\frac{ n}{3}$.  Since $|S|\geq |S\cap R|$ we have $|S|\geq (1-\alpha'+\beta')\varrho\frac{ n}{3}$ and thus by \eqref{WHP}, $|S|\geq \frac{|S\cap R|}{1.05\varrho}\geq \frac{(1-\alpha'+\beta')}{1.05}\frac{n}{3}$. The expected value of $|S\cap R|$ is $\frac{r|S|}{n}\geq\varrho|S|\geq \varrho \frac{(1-\alpha'+\beta')}{1.05}\frac{n}{3}$.  Using Theorem~\ref{Chernoff}.1 again, we have
\begin{align*}
\log Pr(|S\cap R|\geq(1+\gamma)\frac{r|S|}{n})\leq-\frac{(\gamma\varrho|S|)^2}{2(\varrho|S|+\gamma\varrho|S|/3)}\leq-\frac{\gamma^{2}\varrho(1-\alpha'+\beta')}{1.05(2+2\gamma/3)}\frac{n}{3}<\log\frac{1}{3n^{5}}.\end{align*}
There are at most $n^5$ special sets $S\subseteq V(H)$. So by applying Boole's inequality, the probability that there exists a set $S$ which does not satisfy property (ii) is less than $4/9$.

Let $S\subseteq V(H)$ be a special set such that $|S|\geq(1-\alpha'-\beta')\frac{n}{3}=(1-\alpha+2c\alpha)\frac{n}{3}$.
Since $H$ has no $\alpha$-extreme sets, we see by Lemma \ref{nicevertices} that $S$ is not $(\alpha, 2c\alpha)$-extreme. So there exists a set $S'\subseteq S$ having
the property that $|S'|=\floor{2c\alpha\frac{n}{3}}$ and for all $v\in S'$,
$\|v,S\|\geq\alpha\frac{ n}{3}$. Let $T':=S'\cap R$.
We first show that with high probability, $|T'|\geq\frac{3\varrho}{4}|S'|\geq \frac{\varrho}{2}(|S'|+1)\geq \beta'\varrho\frac{n}{3}$.
The expected value of $|T'|$ is $\varrho|S'|\geq\varrho(2c\alpha\frac{n}{3}-1)$.  So by Theorem \ref{Chernoff}.2, we have \begin{align*}
\log Pr(|T'|\leq\varrho|S'|-\frac{\varrho}{4}|S'|)\leq-\frac{(\frac{\varrho}{4}|S'|)^{2}}{2(\varrho|S'|)}=-\frac{\varrho|S'|}{32} \leq-\frac{\varrho(2c\alpha\frac{n}{3}-1)}{32}<\log\frac{1}{9n^{5}}.\end{align*}

Next we show that, with high probability, every vertex in $S'$ has
at least $(1-3c)\varrho \|v,S\|\geq \alpha'\varrho\frac{n}{3}$ neighbors in $S\cap R$.
Let $v\in S'$. The expected value of $\|v,T\|$ is $\varrho\|v,S\|\geq \varrho\alpha\frac{n}{3}$. So by Theorem \ref{Chernoff}.2, we have
\begin{align*}
\log Pr(\|v,S\cap R\|\leq(1-3c)\varrho\|v,S\|)\leq-\frac{(3c\varrho\|v,S\|)^2}{2\varrho\|v,S\|}=-\frac{9c^2\varrho \|v,S\|}{2}\leq -\frac{3c^2\varrho \alpha n}{2}<\log \frac{1}{9n^{6}}.\end{align*}
There are at most $n^5$ special sets $S\subseteq V(H)$ and at most $n^6$ sets defined when we examine the neighborhood of vertices in each special set.  So by applying Boole's inequality, the probability that there exists a set $S$ which does not satisfy property (iii) is less than $2/9$.

The probability that $R$ doesn't satisfy one of the conditions is less than $1$, thus there exists a set $R\subseteq V(H)$ satisfying properties (i)-(iii).

We now show that $R$ is an $(\alpha', \beta', \ep, \varrho)$-special reservoir.  Since $R$ satisfies property (i), $R$ is a $(\ep, \varrho)$-weak reservoir.  Let $S\subseteq V(H)$ be a special set such that $|S\cap R|\geq(1-\alpha'+\beta')\varrho\frac{ n}{3}$.
By property (ii), we have $\varrho|S|(1+\gamma)\geq|S\cap R|\geq(1-\alpha'+\beta')\varrho\frac{ n}{3}$, and thus $$|S|\geq\frac{(1-\alpha'+\beta')}{1+\gamma}\frac{n}{3}=(1-\alpha'-\beta')\frac{n}{3}.$$
Then since $|S|\geq(1-\alpha'-\beta')\frac{n}{3}$ 
there is, by property (iii), a set of vertices $T'\subseteq S\cap R$ with $|T'|\geq\beta'\varrho\frac{ n}{3}$
such that for all $v\in T'$, $\|v,S\cap R\|\geq\alpha'\varrho\frac{ n}{3}$.
Thus $S\cap R$ is not $(\alpha', \beta')$-extreme in $G[R]$.  Therefore $R$ is an $(\alpha', \beta', \ep, \varrho)$-special reservoir.
\end{proof}

We now prove a lemma which allows us to cover most of the complement
of the reservoir with at most two long square paths.

\begin{lem}[Path Cover Lemma]
\label{pathcover} Suppose  
$\ep\le\frac{1}{500}$ and $n\ge6000$. Let $H$ be a graph on $n$ vertices with $\delta(H)\geq\left(\frac{2}{3}-\ep\right)n$.  
Then 

\textup{(a)} $H$ has a square path $P$ with $|P|\geq(\frac{1}{2}-3\ep)n$. 
\label{longpath}

\textup{(b)} $H$ has two vertex disjoint square paths $P_{1}$ and $P_{2}$ so that $|P_{1}|+|P_{2}|>(\frac{5}{6}-2\ep)n$. \label{twopaths} \end{lem}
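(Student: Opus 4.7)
The plan is to argue by contradiction using an optimal square path $P$ (one that is longest, then has the most $3$-chords, then the most $4$-chords), and to combine Lemmas~\ref{segment} and~\ref{edgetopath} to derive structural information about the complement $A := V(H)\setminus V(P)$.

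For (a), suppose $|P|<(\tfrac12-3\epsilon)n$, so $|A|>(\tfrac12+3\epsilon)n$. Lemma~\ref{segment} gives $\|v,P\|\le\tfrac23|P|-\tfrac13$ for every $v\in A$, whence
\[
\delta(H[A])\ge \tfrac{2}{3}|A|-\epsilon n+\tfrac{1}{3},
\]
only $\epsilon n-\tfrac{2}{3}$ short of the hypothesis of Theorem~\ref{thm:FK2} needed to produce a Hamiltonian square path inside $H[A]$ (which would then contradict the maximality of $P$, since $|A|>|P|$). The plan is to close this small deficit by pairing Lemma~\ref{edgetopath} with a long auxiliary square path $Q$ constructed greedily in $H[A]$. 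A greedy extension is valid because adjacent pairs of vertices of $H[A]$ have common neighborhood of size at least $2\delta(H[A])-|A|\ge\tfrac13|A|-2\epsilon n$, producing $Q$ with $|Q|\ge\tfrac13|A|-O(\epsilon n)$. If $|Q|>|P|$ we are done; otherwise, pick an edge $xy$ in the middle of $Q$ so that square paths of at least $q=\lfloor|Q|/2\rfloor$ vertices extend $xy$ in both orientations within $H-V(P)$, and apply Lemma~\ref{edgetopath} to bound $\|xy,P\|\le\tfrac43|P|-\tfrac23q+2$, which together with a lower bound for $\|xy,P\|$ coming from the minimum degree condition is to produce the contradiction $|P|\ge(\tfrac12-3\epsilon)n$.

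For (b), first apply (a) to obtain an optimal square path $P_1$ with $|P_1|\ge(\tfrac12-3\epsilon)n$. If $|P_1|>(\tfrac56-2\epsilon)n$, take $P_2$ to be any edge in $H-V(P_1)$ (which exists by the degree hypothesis) and we are done. Otherwise set $H':=H[V\setminus V(P_1)]$, and observe that Lemma~\ref{segment} applied to $P_1$ gives $\delta(H')\ge\tfrac{2}{3}|H'|-\epsilon n+\tfrac{1}{3}$, placing $H'$ in essentially the same near-FK2 regime as $H[A]$ was in (a). Rerunning the edge-extending strategy of (a) inside $H'$, but now pushed to yield a square path of length close to $|H'|$ rather than only $|H'|/2$ (by iterating the Lemma~\ref{edgetopath} step, since the relative slack $\epsilon n/|H'|$ is only $O(\epsilon)$ and $|H'|\ge(\tfrac16+2\epsilon)n$ is still large), produces $P_2\subseteq H'$ with $|P_1|+|P_2|>(\tfrac56-2\epsilon)n$.

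The main obstacle is in (a): a naive application of Lemma~\ref{edgetopath} with only the trivial lower bound $\|xy,P\|\ge 2|P|-(\tfrac23+2\epsilon)n+2$ (obtained from $d(x),d(y)\ge(\tfrac23-\epsilon)n$ and $d_A(x),d_A(y)\le|A|-1$) yields merely the tautology $q\le|A|+O(\epsilon n)$, which does not contradict $|P|<(\tfrac12-3\epsilon)n$. Hence the Lemma~\ref{edgetopath} argument must be sharpened, either by choosing edges $xy$ with additional structural constraints (e.g., those whose insertion into $P$ is blocked only by optimality, giving extra information on $N(x,y)\cap V(P)$) or by summing Lemma~\ref{edgetopath} over many edges in $Q$ and exploiting the fine structure of optimal square paths. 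The numerical verification that $\epsilon\le 1/500$ and $n\ge 6000$ suffice is delicate but routine; in (b) the extra challenge is that the auxiliary path in $H'$ must be nearly Hamiltonian, which requires applying the edgetopath step more than once.
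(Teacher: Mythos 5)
Your proposal for part (a) has a gap that you yourself flag: the Lemma~\ref{edgetopath} computation you set up does not close, and the ``sharpening'' you invoke is never specified. The idea you are missing is that one should not try to force $\delta(H[A])\ge\frac{2|A|-1}{3}$ from the uniform bound of Lemma~\ref{segment}; instead one runs a dichotomy on a \emph{stronger} degree threshold. Writing $p:=|P|$ and $H':=H-P$, note first that $p>(\frac13-2\ep)n>n/4$ (consider $N(u_1,u_2)$). Then either every $v\in V(H')$ satisfies $\|v,P\|\le(\frac23-4\ep)p$ --- in which case the extra slack $4\ep p\ge\ep n$ absorbs the loss and $\delta(H')\ge(\frac23-\ep)n-(\frac23-4\ep)p\ge\frac23|H'|$, so Theorem~\ref{thm:FK2} applies directly to $H'$ and its hamiltonian square path beats $P$ --- or some $x\in V(H')$ has $\|x,P\|>(\frac23-4\ep)p>\frac12p+1$ and hence two consecutive neighbours $u_i,u_{i+1}$ on $P$. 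Choosing $i$ minimal, so that $\|x,u_1\dots u_{i-1}\|\le\frac{i-1}{2}$, Lemma~\ref{segment} applied to the segment $u_i\dots u_p$ forces $i-1<(\frac16-2\ep)n$; one then discards the prefix $u_1\dots u_{i-1}$ and replaces it by a greedily grown square path of length at least $(\frac16-2\ep)n$ in $H'$ ending at $yx$ with $y\in N(x,u_i)\cap V(H')$, contradicting the optimality of $P$. No application of Lemma~\ref{edgetopath}, and no lower bound on $\|xy,P\|$, is needed in part (a).

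Part (b) of your proposal is also not yet an argument: ``iterating the Lemma~\ref{edgetopath} step'' to make the auxiliary path nearly span $H'$ is precisely the construction that is missing, and it is not how the proof goes. One again splits on whether some $x\in V(H')$ has $\|x,P_1\|>(\frac23-3\ep)p$; if not, Theorem~\ref{thm:FK2} applies to $H'$ outright and $P_2$ spans $H'$. If so, pass to $H'':=G[N_{H'}(x)]$ and use a \emph{single} application of Lemma~\ref{edgetopath} (with $q=\frac13|H'|-2\ep n$, available since $\delta(H')>\frac23|H'|-\ep n$) to show that every $y\in V(H'')$ has small degree to $P_1$, hence so many neighbours inside $H'$ that $\delta(H'')>\frac23|H''|$; Theorem~\ref{thm:FK2} applied to $H''$ then yields $P_2$ with $|P_2|=|H''|\ge\frac23|H'|-\ep n$. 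This $P_2$ is far from spanning $H'$, but it suffices because $|H'|\le(\frac12+3\ep)n$, which is where part (a) enters. So the correct shape of the argument is a threshold dichotomy plus a restriction to a common neighbourhood, rather than a bootstrapped near-Hamiltonicity claim.
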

\begin{proof}
(a) Let $P:=u_{1}u_{2}...u_{p}$ be an optimal square path in $H$ and suppose that $p<(\frac{1}{2}-3\ep)n$.  We first observe that since $\delta(H)\geq (\frac{2}{3}-\ep)n$ we have $N(u_1,u_2)\geq (\frac{1}{3}-2\ep)n$ and thus $p>(\frac{1}{3}-2\ep)n$.
Let $H':=H-P$ and set $h:=|H'|$. If $\|v,P\|\leq (\frac{2}{3}-4\ep)p$ for all $v\in V(H')$ then we have $\delta(H')\geq(\frac{2}{3}-\ep)n-(\frac{2}{3}-4\ep) p \geq\frac{2}{3}h$.
Thus by Theorem~\ref{thm:FK2},  $H'$ has a hamiltonian square path of length more than than $\frac{1}{2}n$, contradicting the optimality of $P$. Thus there is a vertex $x\in V(H')$ such that $\|x,P\|>(\frac{2}{3}-4\ep)p>\frac{1}{2}p+1$. It follows that $x$ is adjacent to two consecutive vertices of $P$. Choose $i\in[p]$ as small as possible such that $u_{i},u_{i+1}\in N(x)$. Let $Q:=u_{1}u_{2}...u_{i-1}$ and set $q:=i-1$. Then $\|x,Q\|\leq\frac{1}{2}q$. We claim that $q<(\frac{1}{6}-2\ep)n$.
Otherwise, \begin{align*}
\|x,P-Q\|>(\frac{2}{3}-4\ep)p-\frac{1}{2}q & =\frac{2}{3}(p-q)+\frac{1}{6}q-4\ep p\\
 & >\frac{2}{3}|P-Q|+\frac{1}{6}(\frac{1}{6}-2\ep)n-4\ep(\frac{1}{2}-3\ep)n\\
 & >\frac{2}{3}|P-Q|+\frac{1}{36}n-\frac{7}{3}\ep n\\
 & >\frac{2}{3}|P-Q|+1,  \end{align*}
contradicting Lemma \ref{segment}. On the other hand, since $|N(x,u_{i})|\geq(\frac{1}{3} -2\ep)n=\frac{2}{3} (\frac{1}{2}-3\ep)n>\frac{2}{3}p$, Lemma~\ref{segment} implies $x$ and $u_{i}$ have a common neighbor $y$ in $H'$. Also, by Lemma \ref{segment} we have \[
\delta(H')\geq(\frac{2}{3}-\ep)n-(\frac{2}{3}p-\frac{1}{3})>\frac{2}{3}h-\ep n,\] and thus for any edge $uv$ in $H'$, $|N_{H'}(u,v)|\geq\frac{1}{3}h-2\ep n>(\frac{1}{6}-2\ep)n$. Hence, we can find a square path $P'$ of length at least $(\frac{1}{6}-2\ep)n$ starting at $xy$. Since $|P'|>q$, the square path $P'yxu_{i}u_{i+1}...u_{p}$ is longer than $P$, a contradiction. This completes the proof of part (a).

(b) Let $P_{1}$ be an optimal square path in $H$ and let $p:=|P_{1}|$. Note that $p\geq(\frac{1}{2}-3\ep)n$ by Lemma \ref{pathcover}(a).  If $p>(\frac{5}{6}-2\ep)n$, then set $P_2=\emptyset$ and we are done. So we may assume that $p\leq (\frac{5}{6}-2\ep)n$. Set $H':=H-P_{1}$ and $h:=|H'|>n/6$. If $\|v,P_{1}\|\leq(\frac{2}{3}-3\ep)p$ for all $v\in V(H')$
then $\delta(H')\geq(\frac{2}{3}-\ep)n-(\frac{2}{3}-3\ep) p\geq\frac{2}{3}h$. Thus  $H'$ has a hamiltonian square path $P_{2}$ by Theorem~\ref{thm:FK2}, and we are done. Otherwise, let $x\in V(H')$ such that $\|x,P_{1}\| >(\frac{2}{3}-3\ep)p$. Note that by Lemma \ref{segment}, we have $\delta(H')\geq(\frac{2}{3}-\ep)n -(\frac{2}{3}p -\frac{1}{3}) > \frac{2}{3}h-\ep n$, and thus there is a square path of length at least $\frac{1}{3}h-2\ep n$ starting at any ordered edge in $H'$. Set $H'':=G[N_{H'}(x)]$ and $h':=|H''|$. Note that by Lemma \ref{edgetopath}, we have that for all
$y\in V(H'')$, 
\begin{align*}
\|y,P_1\|<\frac{4}{3}p-\frac{2}{3}(\frac{1}{3}h-2\ep n)+2-(\frac{2}{3}-3\ep)p=\frac{2}{3}p-\frac{2}{9}h+\frac{4}{3}\ep n+3\ep p+2,\end{align*}
so \begin{align*}
\|y,H'\|>(\frac{2}{3}-\ep)n-(\frac{2}{3}p-\frac{2}{9}h+\frac{4}{3}\ep n+3\ep p+2)=\frac{8}{9}h-\frac{7}{3}\ep n-3\ep p-2.\end{align*}
So every vertex in $H''$ has at most $\frac{1}{9}h+\frac{7}{3}\ep n+3\ep p+1$ nonneighbors in $H'$. Therefore \begin{align*}
\delta(H'')\geq\frac{\frac{2}{3}h-\ep n-(\frac{1}{9}h+\frac{7}{3}\ep n+3\ep p+1)}{\frac{2}{3}h-\ep n}h' >\frac{2}{3}h',\end{align*}  
 since $\ep\leq\frac{1}{500}$, $n\ge6000$, and $h>n/6$. Therefore $H''$ has a hamiltonian square path $P_{2}$. Thus \begin{align*}
|P_{1}|+|P_{2}| > p+\frac{2}{3}h-\ep n = n-\frac{1}{3}h-\ep n \geq n-\frac{1}{3}(\frac{1}{2}+3\ep)n-\ep n = (\frac{5}{6}-2\ep)n.\end{align*}

\end{proof}

Now we are ready to finish the nonextreme case.
\begin{proof}[Proof of Theorem \ref{non-extremal}]
Let $\alpha:=\frac{1}{36}$ and let $G$ be a graph on $n$ vertices. Suppose $G$ has no $\alpha$-extreme sets, $n\geq n_{0}:=\n$, and $\delta(G)\geq \frac{2}{3}n$.  Let $c:=\frac{1}{14}$, $\ep:=\frac{50}{1057}\alpha$, and $\varrho:=1-\frac{2/3+\ep}{5/6-2\ep}$.  Apply Lemma~\ref{reservoir} to obtain an $(\frac{11}{14}\alpha, \frac{1}{14}\alpha, \ep, \varrho)$-special reservoir $R$. Let $H:=G-R$ and let $h:=|H|$. Since $R$ is a special
reservoir we have $\delta(H)\geq(\frac{2}{3}-\ep)h$. Now we apply
Lemma~\ref{pathcover} to $H$, to get disjoint square paths $P_{1}$
and $P_{2}$ so that $$|P_{1}|+|P_{2}|>(\frac{5}{6}-2\ep)h=(\frac{5}{6}-2\ep)(n-\ceiling{\varrho n})\geq (\frac{2}{3}+\ep)n-1>\frac{2}{3}n.$$
Since $R$ is a special reservoir, every special set $S\subseteq V(G)$ has the property that $S\cap R$ is not $(\frac{11}{14}\alpha, \frac{1}{14}\alpha)$-extreme in $G[R]$. So we apply Lemma \ref{connecting} at most twice to connect the paths $P_{1}$ and $P_{2}$ through $R$.
On the second application, we set $L:=V(P_1)\cap R$ to make sure that we avoid the vertices used
in the first application. This gives us a square cycle $C$ with $V(P_{1})\cup V(P_{2})\subseteq V(C)$
and thus $|C|>\frac{2}{3}n$. Therefore $G$ has a hamiltonian square cycle by Theorem \ref{thm:FK3}.
\end{proof}

\section{Extremal Case\label{sec:Ex}}

In this section we prove Theorem \ref{thm:good}. First we need two
propositions. Note that the length of an (ordinary) path $P$ is the size $\|P\|$ of its edge set.
\begin{prop}
\label{pro:LPath}Every connected graph $H$ with $|H|\geq 3$ has a path or cycle of
length $\min(2\delta(H),|H|)$. 
\end{prop}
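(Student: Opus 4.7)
The plan is to apply the classical Dirac-style longest-path argument.

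I would start by letting $P = v_0 v_1 \dots v_k$ be a longest path in $H$, which has length $k$. Because $P$ is longest, every neighbor of $v_0$ lies in $\{v_1,\dots,v_k\}$ and every neighbor of $v_k$ lies in $\{v_0,\dots,v_{k-1}\}$ (otherwise we could extend $P$). If $k \ge 2\delta(H)$, then since trivially $k \le |H|-1 < |H|$, we have $\min(2\delta(H),|H|) = 2\delta(H)$, and a subpath of $P$ of length $2\delta(H)$ gives the desired path. So I may assume $k < 2\delta(H)$.

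Next I would run the standard rotation/pigeonhole step. Set $S = \{i \in \{1,\dots,k\} : v_0 v_i \in E\}$ and $T = \{i \in \{0,\dots,k-1\} : v_i v_k \in E\}$, both of size at least $\delta(H)$. Shifting $T$ to $T' := \{i+1 : i \in T\} \subseteq \{1,\dots,k\}$, the inequality $|S|+|T'| \ge 2\delta(H) > k$ forces some index $i+1$ to lie in $S \cap T'$. This gives a cycle $C = v_0 v_{i+1} v_{i+2} \dots v_k v_i v_{i-1} \dots v_1 v_0$ on all $k+1$ vertices of $P$.

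Finally I would invoke connectivity to show $C$ is hamiltonian in $H$. If there were a vertex $u \in V(H) \setminus V(C)$, by connectivity some such $u$ would have a neighbor $v \in V(C)$; breaking $C$ at $v$ and attaching $u$ produces a path on $k+2$ vertices, contradicting the choice of $P$. Hence $V(C) = V(H)$, so $C$ is a cycle of length $|H|$. Combined with $k < 2\delta(H)$, i.e.\ $|H| = k+1 \le 2\delta(H)$, this cycle has length $|H| = \min(2\delta(H),|H|)$, finishing the proof.

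The only delicate point is bookkeeping between ``length'' as edge count and ``$|H|$'' as vertex count, and making sure the two cases ($k \ge 2\delta(H)$ vs.\ $k < 2\delta(H)$) line up with whether the $\min$ is attained by $2\delta(H)$ or by $|H|$. There is no real obstacle; the whole proof is essentially Dirac's theorem plus the observation that in the non-hamiltonian case the longest-path assumption and connectivity already force $k+1 = |H|$.
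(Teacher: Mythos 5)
Your proof is correct and follows essentially the same route as the paper: take a longest path, note that if it is too short then the standard Dirac rotation yields a cycle spanning its vertex set, and use connectivity to force that cycle to be hamiltonian. You have simply written out explicitly the rotation/pigeonhole step that the paper compresses into the phrase ``as in the proof of Dirac's Theorem.''
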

\begin{proof}
Let $P$ be a maximum length path in $H$.  If we are not done, then $\|P\| < 2\delta(H)$. So, as in the proof of Dirac's Theorem \cite{D}, 
$G$ has a cycle $C$ that spans $V(P)$. If $C$ is hamiltonian then we are done; otherwise, using connectivity, we can extend $C$ to a path longer than $P$, a contradiction.
\end{proof}
\begin{prop}
\label{pro:LCyc}If $H$ is a graph with circumference $l>|H|-\delta(H)$,
then $l\geq\min(2\delta(H),|H|)$, and moreover, if $|H|$
is also even, then $H$ has an even cycle of length at least $\min(2\delta(H),|H|)$.\end{prop}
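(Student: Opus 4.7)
My plan is to first show that the hypothesis $l > |H| - \delta(H)$ forces $H$ to be $2$-connected, then invoke Dirac's classical theorem on $2$-connected graphs to obtain $l \geq \min(2\delta, |H|)$, and finally upgrade the cycle to an even one when $|H|$ is even.

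To prove $2$-connectivity, first $H$ must be connected: any component $H_0$ not containing $V(C)$ would have $|H_0| \geq \delta + 1$ (each vertex of $H_0$ has all of its $\geq \delta$ neighbors inside $H_0$), and hence $n - l \geq \delta + 1$, contradicting $l > n - \delta$. Second, $H$ has no cut vertex: any cut vertex $u$ would split $H - u$ into at least two components each of size $\geq \delta$ (each vertex needs $\delta$ neighbors inside its own block), and since $C$ lies in a single block this forces $l \leq n - \delta$, again a contradiction. Dirac's theorem for $2$-connected graphs (circumference $\geq \min(2\delta, n)$) then gives $l \geq \min(2\delta, |H|)$.

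For the even cycle assertion, assume $|H| = n$ is even. If $l$ is even we are done; otherwise $l$ is odd, so $l < n$ by parity, and $l \geq 2\delta + 1$ since $2\delta$ is even. Pick $v \in V(H) \setminus V(C)$; since $l > n - \delta$, $v$ has at least $l + \delta + 1 - n \geq 2$ neighbors on $C$, at cyclic positions $x_{i_1}, \ldots, x_{i_k}$ with arc lengths $a_1, \ldots, a_k$ summing to the odd number $l$. Some $a_j$ is odd, and replacing that arc by the length-$2$ path $x_{i_j} v x_{i_{j+1}}$ yields a cycle of even length $l - a_j + 2$; to make this at least $2\delta$ I need $a_j \leq l - 2\delta + 2$. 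When $v$ has $k \geq \delta$ neighbors on $C$ this follows from a pigeonhole bound: the other $k - 1$ arcs each have length at least $2$ (no two consecutive $v$-neighbors allowed), so they sum to at least $2\delta - 2$, forcing the odd arc to satisfy the required bound. When $k < \delta$ we have $|V(H) \setminus V(C)| \geq \delta - k + 1 \geq 2$, and I would find two adjacent outside vertices $v, w$ and construct an even cycle through both of the form $v \, x_a \, (\text{arc}) \, x_b \, w \, v$ of length equal to the arc plus $3$, choosing the arc of odd length and of length at least $2\delta - 3$.

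The main obstacle I anticipate is the case $k < \delta$: exploiting the restriction that no $x_i \in N(v)$ can be immediately followed on $C$ by $x_{i+1} \in N(w)$ for adjacent outside vertices $v, w$ (else the cycle $x_1 \ldots x_i v w x_{i+1} \ldots x_l x_1$ has length $l + 2$, contradicting the maximality of $C$), which heavily constrains the joint $C$-neighborhood of $v$ and $w$ and should force the existence of an admissible odd arc between their neighborhoods of length at least $2\delta - 3$.
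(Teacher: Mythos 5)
Your reduction of the first assertion to Dirac's circumference theorem is fine: the connectivity and cut-vertex arguments are correct, and $l\geq\min(2\delta,|H|)$ follows (the paper instead proves this directly from a maximum path in $H-C$, which is why its argument recycles so cleanly into the parity step). The even-cycle argument, however, has a genuine gap in the case $k<\delta(H)$, which you yourself flag as the main obstacle. The difficulty is real, not just unfinished bookkeeping: with only the single detour $x_a v w x_b$ of length $3$ available, you need a pair $x_a\in N(v)\cap V(C)$, $x_b\in N(w)\cap V(C)$ whose odd arc has length at least $2\delta-3$, equivalently whose even arc has length at most $l-2\delta+3$. When $l=2\delta+1$ this forces an even arc of length exactly $4$ (every arc between a $v$-neighbor and a $w$-neighbor has length at least $3$, since otherwise the arc plus the path $x_b w v x_a$ would beat the circumference), and nothing in your constraints rules out all such even arcs having length, say, $6$. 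The separation condition you invoke controls spacing but not parity, so "should force the existence of an admissible odd arc" is not a proof. A further problem is that $w$ need not have many neighbors on $C$ at all, since its neighbors may lie outside $C$, so you cannot even guarantee a rich supply of candidate pairs.

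The paper closes exactly this case by replacing your edge $vw$ with a \emph{maximum} path $P=v_1\dots v_p$ in $H-C$. The endpoint $v_p$ then has all of its neighbors on $P\cup C$, hence at least $d(v_p)-p$ neighbors on $C-x$ for a suitable $x\in N(v_1)\cap V(C)$, and maximality of $C$ forces the two end segments $xCy$ and $zCx$ (with $y,z$ the extreme $v_p$-neighbors) to have length at least $p+1$ --- exactly the length of the detour $xPy$ or $zPx$ through $P$. This produces a whole family of parity switches: the two end segments (swappable at no cost for the length-$(p+1)$ detour) and the at least $d(v_p)-p-1$ inner segments between consecutive $v_p$-neighbors (each of length at least $2$, swappable for the length-$2$ detour through $v_p$). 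Since all segment lengths sum to the odd number $l$, at least one switch flips the parity, and every switch keeps the total at least $(p+1)+(p+1)+2(d(v_p)-p-1)=2d(v_p)\geq 2\delta$. To salvage your outline you need this stronger structure --- a maximal path rather than an arbitrary adjacent pair, and all the detours used in concert --- at which point you have essentially reconstructed the paper's proof.
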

\begin{proof}
Let $C\subseteq H$ be a cycle of length $l$, and
fix an orientation of $C$. If $|C|=|H|$ then we are done, even if
$|H|$ is even. Otherwise, let $P:=v_{1}\dots v_{p}$ be a maximum
path in $H-C$. Then all neighbors of $v_{p}$ are on $P\cup C$.
By hypothesis $\delta(H)>|H|-l\geq p$, and so $v_{1}$ has a neighbor
$x\in C$ and $v_{p}$ has a neighbor on $C-x$. Let $y,z\neq x$ be neighbors
of $v_{p}$ on $C$ with $y$ as close as possible to $x$ in the
forward direction and $z$ as close as possible in the backward direction
(possibly $y=z$). Then $\left\Vert zCx\right\Vert ,\left\Vert xCy\right\Vert \geq p+1$,
as otherwise we could replace the interior vertices of one of these segments
with $P$ to obtain a longer cycle, which would yield a contradiction. Moreover, since $C$ has maximum length, any two neighbors of $v_{p}$ are separated by
at least one vertex on $C$. Since $v_{p}$ has at least $d(v_{p})-p$
neighbors on $C-x$, \[
|C|=\left\Vert xCy\right\Vert +\left\Vert yCz\right\Vert +\left\Vert zCx\right\Vert \geq(p+1)+2(d(v_{p})-p-1)+(p+1)\geq2\delta(H).\]

Now suppose $|H|$ is even. If $|C|$ is even we are done, so suppose
$|C|$ is odd. Consider the path $P$ and vertices $x,y,z$ defined
above. If $\|xCy\|$ and $\|zCx\|$ have different parity, then replace
$xCy$ with $xPy$ or replace $zCx$ with $zPx$ to get an even cycle
of length at least $2\delta(H)$. So assume $\|xCy\|$ and $\|zCx\|$
have the same parity, and thus $\|yCz\|$ is odd. Now $v_{p}$ has
$k\geq d(v_{p})-p$ neighbors on $yCz$. Let $y=a_{1},a_{2},\dots,a_{k}=z$
be the neighbors of $v_{p}$ on $yCz$ in their natural order. Since
$\|yCz\|$ is odd, some segment $a_{i}Ca_{i+1}$ must have odd length.
By replacing $a_{i}Ca_{i+1}$ with $a_{i}v_{p}a_{i+1}$, we get a
cycle $C'$ with even length such that $|C'|\geq(p+1)+(p+1)+2(d(v_{p})-p-1)\geq2\delta(H)$ as before. 
\end{proof}

\begin{proof}[Proof of Theorem~\ref{thm:good}]
Let $G=(V,E)$ be a graph on $n$ vertices with $\delta(G)\geq \frac{2}{3}n$.  By Corollary \ref{3k} we may assume $n=3k$, which gives $\delta(G)\geq 2k$. Set $\alpha:=\frac{1}{36}$, and suppose $G$ has an $\alpha$-extreme
subset.  Let $S\subseteq V$ be an $\alpha$-extreme set of minimal order, so $|S|= \lceil (1-\alpha)k\rceil$. Set $T:=V\setminus S$. If $k<1/\alpha$, then $|S|=k$, $|T|=2k$, $G[S,T]$ is complete and $\delta(G[T])\geq k$.  So by Dirac's theorem $T$ has a hamiltonian cycle $C:=y_1\dots y_{2k}y_1$. Since $G[S,T]$ is complete we can insert the vertices $x_1, x_2, \dots, x_k$ of $S$ into $C$ so that $y_1y_2x_1y_3y_4x_2\dots y_{2k-1}y_{2k}x_ky_1y_2$ is a hamiltonian square cycle.  So for the rest of the proof assume $k\geq 1/\alpha$.  Choose $T_{0}\subseteq T$ such that $|V\setminus (S\cup T_0)|$ is even, 
$2\floor{\sqrt{\alpha}k}-1\leq|T_0|\le 2\floor{\sqrt{\alpha}k}$, and subject to this, $\left\Vert T_{0},S\right\Vert $
is as small as possible. Set $T_{1}:=T\setminus T_{0}$, and note that $|T_1|$ is even. We have, 
\begin{equation}
\forall x\in S,~\overline{\left\Vert x,T\right\Vert }\le k-(|S|-\left\Vert x,S\right\Vert) \leq 2\floor{\alpha k}.\label{eq:degx}\end{equation}
Every vertex in $T_1$ has at most as many nonneighbors in $S$ as every vertex in $T_0$. Thus, using $\alpha=\frac{1}{36}$, and expressing $k$ as $k=36q+r$ with $q,r\in \mathbb{Z}$ and $0\le r \le 35$, we have 
\begin{equation}
\forall y\in T_{1},~\overline{\left\Vert y,S\right\Vert }\leq \floor{\frac{2\floor{\alpha k}|S|}{|T_{0}\cup \{y\}|}}\leq \floor{\frac{2\floor{\alpha k}(k-\floor{\alpha k})}{2\floor{\sqrt{\alpha}k}}} \le \floor{\frac{(35q+r)}{6}}{\leq} \floor{\sqrt{\alpha}k}.
\label{eq:degy}
\end{equation}

Set $m:=k-|T_0|+\floor{\alpha k}$ and note that since $k\geq 36$,
\begin{equation}\label{eq:m}
m\geq \frac{2}{3}k+\floor{\alpha k}\geq \frac{2}{3}k+1. 
\end{equation}
Thus we have \begin{equation}
\delta(G[T_{1}])\geq 2k-|S\cup T_{0}|=k-|T_0|+\floor{\alpha k}=m\ge\frac{2}{3}k+1.\label{eq:T_1}
\end{equation}

\noindent 
\textbf{Case 1: }There exists an even cycle \textbf{$C\subseteq G[T_{1}]$}
of length $2l\geq2m$; say $C:=y_{1}\dots y_{2l}y_{1}$.  Looking ahead to an application in Case 2, we prove something slightly more general than what is needed for Case 1.  For some $t\leq |T_1|/2$, let $T_1'\subseteq T_1$ such that $|T_1'|=2t$.  Enumerate the vertices of $T_1'$ as $z_1,\dots,z_{2t}$.  Let $P:=\{p_{1},\dots,p_{t}\}$ be a set of \emph{ports}, where $p_{i}:=\{z_{2i-1},z_{2i},z_{2i+1},z_{2i+2}\}$ and addition of indices is modulo $t$.  We say that a vertex $x\in S$ can be inserted into port $p_{i}$ if $p_{i}\subseteq N(x)$.

\begin{claim}\label{ports}
For $S'\subset S$ with $|S'|\ge |S|-4$, let $\Gamma$ be the $S',P$-bigraph with $xp\in E(\Gamma)$ if and only if $x$ can be inserted into $p$. Then $\Gamma$ has a matching $M:=\{x_{i}p_{i}:i\in[t]\}$ that saturates $P$.
\end{claim}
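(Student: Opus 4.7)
The plan is to apply Hall's theorem to $\Gamma$. Two preparatory observations drive the counting. First, since $p_i=\{z_{2i-1},z_{2i},z_{2i+1},z_{2i+2}\}$ is a length-four window advancing by two, every vertex $z_j\in T_1'$ lies in exactly two ports (for $t\ge 3$, the only regime of interest since the Case 1 cycle has length at least $2m\ge \tfrac{4}{3}k+2$). Second, \eqref{eq:degx} gives that each $x\in S$ has at most $2\floor{\alpha k}$ non-neighbors in $T$, and \eqref{eq:degy} gives that each $z\in T_1$ has at most $\floor{\sqrt{\alpha}k}$ non-neighbors in $S$. Consequently each $x\in S'$ fails to match at most $4\floor{\alpha k}$ ports (each non-neighbor kills at most two port-incidences), and each $p\in P$ has at most $4\floor{\sqrt{\alpha}k}$ non-neighbors in $S'$.

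Now fix $P'\subseteq P$ and split on its size. If $|P'|>4\floor{\alpha k}$, then no $x\in S'$ can miss every port in $P'$, so $N_\Gamma(P')=S'$; it then suffices to verify $|S'|\ge t$. If instead $|P'|\le 4\floor{\alpha k}$, pick any $p\in P'$ and use the port-side degree bound:
\begin{equation*}
|N_\Gamma(P')|\ge \deg_\Gamma(p)\ge |S'|-4\floor{\sqrt{\alpha}k}\ge 4\floor{\alpha k}\ge |P'|.
\end{equation*}
The middle inequality reduces to $|S'|\ge 4\floor{\alpha k}+4\floor{\sqrt{\alpha}k}$, whose right-hand side is at most $\tfrac{7}{9}k$ at $\alpha=\tfrac{1}{36}$, comfortably below $|S'|\ge(1-\alpha)k-4=\tfrac{35}{36}k-4$ once $k\ge 36$.

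The only nontrivial ingredient is the inequality $|S'|\ge t$ used in the large-$P'$ case. From $t\le|T_1|/2$, $|T_1|=3k-|S|-|T_0|$, $|S|=\ceiling{(1-\alpha)k}$, and $|T_0|\ge 2\floor{\sqrt{\alpha}k}-1$, this collapses to a numerical check whose dominant terms compare $2\sqrt{\alpha}k$ with $3\alpha k$, comfortable for $\alpha=1/36$ and $k\ge 36$ (the assumption already in force at this point in the extremal proof). With Hall's condition verified in both cases, Hall's theorem produces a matching of $\Gamma$ saturating $P$; this is the desired matching $M$. The main obstacle is therefore not conceptual but just keeping this one constant check honest.
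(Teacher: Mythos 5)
Your proof is correct and is essentially the paper's argument: the paper also applies Hall's theorem with exactly the two degree bounds you derive from \eqref{eq:degx} and \eqref{eq:degy} ($\overline{\|x,P\|}_\Gamma\le 4\floor{\alpha k}$ and $\overline{\|S',p\|}_\Gamma\le 4\floor{\sqrt{\alpha}k}$), merely packaging your two cases into the single sufficient condition $\|x,P\|_\Gamma+\|S',p\|_\Gamma\ge|P|$ together with $|S'|\ge|T_1|/2\ge|P|$. The only caveat is that the inequality $|S'|\ge t$ is tight rather than ``comfortable'' at $k=36$ (it holds with equality there), but it does hold, so there is no gap.
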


\begin{proof}

Using Hall's Theorem, since $|S'|\geq |T_1|/2\geq |P|$, it suffices to show that  
\begin{equation}
\|x,P\|_{\Gamma}+\|S',p\|_{\Gamma}\ge|P|\textrm{ for all \ensuremath{x\in S' } and \ensuremath{p\in P}.}\label{H}\end{equation} 

If $x\in S'$, then $\overline{\|x,T\|}_{G}\leq 2\floor{\alpha k}$ by \eqref{eq:degx}.
Since each $y\in T_1'$ is in two ports, each nonedge $xy$ contributes
to two nonedges in $\Gamma$. So $\overline{\|x,P\|}_{\Gamma}\leq 4\floor{\alpha k}$.
Thus \begin{equation}
\|x,P\|_{\Gamma}\geq|P|-\overline{\|x,P\|}_{\Gamma}\geq |P|-4\alpha k.\label{SP}\end{equation}
 If $p\in P$, then $\overline{\|S',y\|}_{G}\leq \floor{\sqrt{\alpha}k}$ for each
$y\in p$ by \eqref{eq:degy}. Thus $\overline{\|S',p\|}_{\Gamma}\leq 4\floor{\sqrt{\alpha}k}$. 
So\begin{equation}
\|S',p\|_{\Gamma}\geq|S'|-\overline{\|S',p\|}_{\Gamma}\ge (1-\alpha-\frac{4}{k}-4\sqrt{\alpha})k.\label{PS}\end{equation}
Since $4\sqrt{\alpha}+5\alpha+\frac{4}{k}\leq \frac{33}{36}< 1$, summing \eqref{SP} and \eqref{PS} 
yields \eqref{H}.
\end{proof}

Let $S':=S$ and $P:=\{p_{1},\dots,p_{l}\}$, where $p_{i}:=\{y_{2i-1},y_{2i},y_{2i+1},y_{2i+2}\}$ and addition of indices is modulo $2l$. 
By Claim \ref{ports}, there exist $x_{1}, \dots, x_l$ such that $y_{1}y_{2}x_{1}y_{3}y_{4}x_{2}\dots y_{2l-1}y_{2l}x_{l}y_{1}y_{2}$
is a square cycle of length $3l$. By (\ref{eq:T_1}), $3l\geq 3m>2k$, and so Theorem~\ref{thm:FK3} implies that $G$ has a hamiltonian square
cycle.

\noindent 
\textbf{Case 2:} Not Case 1. Since $|T_1|$ is even, using Proposition~\ref{pro:LCyc} and \eqref{eq:T_1}, 
 \begin{equation}
|D|\leq|T_{1}|-\delta(G[T_{1}])\leq k,\text{ for every cycle }D\subseteq G[T_{1}].\label{eq:con}\end{equation}
First suppose $G[T_{1}]$ is connected.  By Proposition \ref{pro:LPath}, there exists a path in $G[T_1]$ of length at least $2m$.
\begin{claim}\label{nocross}
Let $P=y_1\dots y_l$ be a path of maximum length in $G[T_1]$.  If $y_i\in N(y_1)$ and $y_j\in N(y_l)$, then $i\leq j$.
\end{claim}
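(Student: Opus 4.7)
My plan is to assume for contradiction that $y_i\in N(y_1)$ and $y_j\in N(y_l)$ with $i>j$, and to exhibit a cycle in $G[T_1]$ of length exceeding $k$, contradicting \eqref{eq:con}. The starting observation is that the two chords $y_1y_i$ and $y_jy_l$ yield the ``crossing'' cycle $C:=y_1y_2\cdots y_jy_ly_{l-1}\cdots y_iy_1$ of length $l-(i-j)+1$. Combined with Proposition~\ref{pro:LPath}, which guarantees $l\geq 2m+1\geq \tfrac{4}{3}k+3$, the bound $|C|\leq k$ forces the gap $i-j\geq l-k+1$. In the special case $i=j+1$ the cycle $C$ already spans $V(P)$ and has length $l>k$, giving an immediate contradiction; so I may assume $i\geq j+2$ and that the gap subpath $y_{j+1}\cdots y_{i-1}$ is nonempty.

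The key step is to re-route $P$ along $y_jy_l$ and through the gap to obtain another longest path
\[
P^*:=y_1\,y_2\,\cdots\, y_j\,y_l\,y_{l-1}\,\cdots\, y_i\,y_{i-1}\,\cdots\, y_{j+1},
\]
which has the same vertex set as $P$ but now has $y_{j+1}$ as an endpoint. Because $P^*$ is maximum, every chord from $y_{j+1}$ produces a cycle in $P^*$, and applying \eqref{eq:con} to each such cycle forces every neighbor of $y_{j+1}$ to lie at position at least $l-k+1$ along $P^*$. Translating positions back to indices in $P$ gives $N(y_{j+1})\subseteq\{y_{l-k+1},\ldots,y_l\}\setminus\{y_{j+1}\}$, a range of only $k-1$ vertices. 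Since $|N(y_{j+1})|\geq\delta(G[T_1])\geq m\geq \tfrac{2}{3}k+1$, at most $(k-1)-m\leq\tfrac{1}{3}k-2$ vertices of this range fail to be neighbors of $y_{j+1}$. A straightforward count on the subrange $\{y_{i+1},\ldots,y_{l+i-k}\}$ (of size $l-k$) then guarantees some $t$ with $i+1\leq t\leq l+i-k$ and $y_t\in N(y_{j+1})$.

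With this $t$ in hand, I would close the argument with the cycle
\[
y_1\,y_i\,y_{i-1}\,\cdots\, y_{j+1}\,y_t\,y_{t+1}\,\cdots\, y_l\,y_j\,y_{j-1}\,\cdots\, y_2\,y_1,
\]
which uses the chords $y_1y_i$, $y_{j+1}y_t$, and $y_jy_l$ together with edges of $P$. Because $t>i$, its vertex set $\{y_1,\ldots,y_j\}\cup\{y_{j+1},\ldots,y_i\}\cup\{y_t,\ldots,y_l\}$ consists of distinct vertices, and a direct edge count gives its length as $l+i-t+1\geq k+1$, contradicting \eqref{eq:con}.

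The main obstacle is the positional bookkeeping in the rerouted path $P^*$: one must carefully identify the positions of arbitrary neighbors $y_s$ of $y_{j+1}$ in $P^*$ to confine $N(y_{j+1})$ to the narrow range $\{y_{l-k+1},\ldots,y_l\}$. Once this confinement is set up and combined with the slack between the degree lower bound $m$ and the cycle-length bound $k$, the final long cycle emerges by a clean pigeonhole argument on $\{y_{i+1},\ldots,y_{l+i-k}\}$.
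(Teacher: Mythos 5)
Your argument is correct, but it takes a genuinely different route from the paper's. The paper chooses the crossing pair $y_i\in N(y_1)$, $y_j\in N(y_l)$ with $i>j$ so that $i-j$ is \emph{minimum} and then splits on the size of the gap: if $i-j-1\le\frac13 k$, the crossing cycle $y_1\dots y_jy_l\dots y_iy_1$ already has more than $k$ vertices by \eqref{eq:m}; if $i-j-1>\frac13 k$, minimality forces all $m$ or more neighbors of $y_1$ to avoid the gap $\{y_{j+1},\dots,y_{i-1}\}$, so the largest index $h$ with $y_h\in N(y_1)$ satisfies $h\ge m+i-j-1>k$ and the cycle $y_1\dots y_hy_1$ violates \eqref{eq:con}. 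You instead reroute $P$ into a second maximum path $P^*$ ending at $y_{j+1}$, confine $N(y_{j+1})$ to the top $k$ positions, and close with a three-chord cycle through $y_1y_i$, $y_{j+1}y_t$ and $y_jy_l$. Two remarks on your version. First, the ``translation'' step you flag as the main obstacle does go through, but only because $j\ge l-k+1$ and $i\le k$ (i.e., the containments in \eqref{dis}, which follow from \eqref{eq:con} applied to the cycles cut off by chords from the endpoints); the reflection $p\mapsto l+j+1-p$ on positions $j+1,\dots,l$ of $P^*$ then sends the last $k$ positions exactly onto the index set $\{l-k+1,\dots,l\}$, and without $j\ge l-k+1$ the image would instead be $\{y_{j+1},\dots,y_{j+k}\}$. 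You should make this dependence explicit. Second, your pigeonhole window $\{y_{i+1},\dots,y_{l+i-k}\}$ needs $i\le k$ to sit inside $\{y_{l-k+1},\dots,y_l\}$, again supplied by \eqref{dis}. The paper's proof is shorter and avoids all positional bookkeeping by exploiting the minimal choice of $i-j$, which your argument never uses; your approach compensates with the degree count at the new endpoint $y_{j+1}$, which is essentially the same idea the paper deploys later when establishing \eqref{A'C'}.
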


\begin{proof}
Suppose there exists $y_i\in N(y_1)$, $y_j\in N(y_l)$ such that $i>j$.  With respect to this condition, choose $y_i$ and $y_j$ such that $i-j$ is minimum.  If $i-j-1\leq \frac{1}{3}k$, 
set $D:=y_1\dots y_jy_l\dots y_iy_1$.  By \eqref{eq:m}, 
$|D|\geq 2m-\frac{1}{3}k>k$, which contradicts \eqref{eq:con}.  If $i-j-1>\frac{1}{3}k$, let $h$ be maximum such that $y_h\in N(y_1)$ and set $D:= y_1y_2\dots y_hy_1$.  Since $i-j-1>\frac{1}{3}k$ and $i-j$ is minimum, we have $|D|\geq h\geq m+i-j-1>k$, which contradicts \eqref{eq:con}.
\end{proof}

Let $P:=y_1\dots y_l$ be a path of maximum length in $G[T_1]$ and with respect to this condition, choose $P$ so that  $j-i$ is minimum, where $y_j$ is the smallest indexed neighbor of $y_l$ and $y_i$ the largest indexed neighbor  of $y_1$.  
Note that by Claim \ref{nocross}, $j-i\geq 0$. By \eqref{eq:con} we have,
\begin{equation}
 N(y_{1})\subseteq \{y_2,\dots y_k\} \textrm{ and } N(y_{l})\subseteq \{y_{l-k+1},\dots,y_{l-1}\}. \label{dis}\end{equation}

Set \[A:=\{y_{1},\dots,y_{i-1}\},\ B:=\{y_{i},\dots,y_{j}\},\ C:=\{y_{j+1},\dots,y_{l}\}.\]
Without loss of generality we may suppose $|A|\geq |C|$ and thus we have \begin{equation}\label{ACbounds}m\leq\delta(G[T_{1}])\leq|C|\leq |A|< k\end{equation} and $|B|= j-i+1\leq l-2m$.

Next we show that \begin{equation}
\left\Vert A,C\right\Vert =0.\label{dj}\end{equation}
 Suppose $a<i\leq j<b$ and $y_{a}y_{b}\in E$. Choose $y_{a'}\in N(y_{1})$
and $y_{b'}\in N(y_{l})$ such that $a<a'\le i\leq j\le b'<b$ and
both $a'-a$ and $b-b'$ are minimal.  Now $D:=y_{1}Py_{a}y_{b}Py_{l}y_{b'}Py_{a'}y_{1}$ is a cycle having the property that $N(y_1)\cup N(y_l)\subseteq V(D)$ and thus $|D|\geq|N(y_1)\cup N(y_l)|\geq 2m-1>k$, contradicting (\ref{eq:con}).

Set $A':=\{y_h\in A:y_{h+1}\in N(v_1)\}$ and $C':=\{y_h\in C: y_{h-1}\in N(y_l)\}$.  Note that $|A'|\geq m$ and $|C'|\geq m$.  
We claim that the vertices in $A'\cup C'$ are good in the sense that
\begin{equation}\label{A'C'}
\forall a\in A', N(a)\cap (T_1\setminus (A\cup \{y_i\}))= \emptyset \text{ and }  \forall c\in C', N(c)\cap (T_1\setminus (C\cup \{y_j\}))= \emptyset.                                                                                                                                                                                                                                                                                                                                                                                            \end{equation}
Without loss of generality, suppose some $y_h\in A'$ has a neighbor $y'\in T_1\setminus (A\cup \{y_i\})$.  
If $y'\notin V(P)$, then $y'y_h\dots y_1y_{h+1}\dots y_l$ is longer than $P$ which is a contradiction.  Otherwise, by \eqref{dj}, $y'\in B$.  However, $y_h\dots y_1y_{h+1}\dots y_l$ is a path for which $j-i$ is smaller, contradicting the minimality of $j-i$.

Now suppose $G[T_1]$ is not connected.  Since $\delta(G[T_1])\geq m$ and $|T_1|< 3m$, $G[T_1]$ has exactly two components.  Call these components $A$ and $C$, then set $A':=A$ and $C':=C$.  Without loss of generality, suppose $|A|\geq |C|$.  Since $\delta(G[T_1])\geq m$, we have $m+1\leq |C|$ which implies $|A|<k$, by \eqref{eq:m} and the fact that $|T_1|=2k+\floor{\alpha k}-|T_0|$.  So regardless of whether $G[T_1]$ is connected or not, all of the following hold: \eqref{ACbounds}, \eqref{dj}, \eqref{A'C'}, and  \begin{equation}\label{eq:ACnonneighbors} \forall a\in A', \overline{\|a, A\|}\leq|A|-m 
~\text{ and }~ \forall c\in C', \overline{\|c, C\|}\leq|C|-m.\end{equation}
For $Y\in\{A, C\}$, let $Y'=A'$ if $Y=A$ and let $Y'=C'$ if $Y=C$.
\begin{claim}\label{Y'}
For all $v\in V\setminus (A\cup C)$, there exists $Y\in\{A, C\}$ such that for all $y\in Y'$, $|(N(v)\cap N(y))\cap Y|\geq 3$.
\end{claim}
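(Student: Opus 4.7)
The plan is to reduce Claim \ref{Y'} to a statement about the number of non-neighbors of $v$ in $A$ versus $C$, using the ``goodness'' of $A'$ and $C'$ already captured by \eqref{eq:ACnonneighbors}. For any $v \in V\setminus(A\cup C)$, any $Y\in\{A,C\}$, and any $y\in Y'$, inclusion-exclusion on non-neighbors gives
\[
|(N(v)\cap N(y))\cap Y|\;\geq\; |Y|-\overline{\|v,Y\|}-\overline{\|y,Y\|}.
\]
Since $y\in Y'$, the bound $\overline{\|y,Y\|}\leq |Y|-m$ from \eqref{eq:ACnonneighbors} applies, so this simplifies to $m-\overline{\|v,Y\|}$. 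Hence it suffices to exhibit $Y\in\{A,C\}$ with $\overline{\|v,Y\|}\leq m-3$, since then $|(N(v)\cap N(y))\cap Y|\geq 3$ for every $y\in Y'$ simultaneously, as required.

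Next I would argue by contradiction that such a $Y$ must exist. Suppose instead that both $\overline{\|v,A\|}\geq m-2$ and $\overline{\|v,C\|}\geq m-2$. Since $A$ and $C$ are disjoint, adding these inequalities yields
\[
\overline{\|v,A\cup C\|}\;\geq\; 2m-4.
\]
On the other hand, the degree condition $\delta(G)\geq 2k$ forces $v$ to have at most $n-1-d(v)\leq k-1$ non-neighbors in all of $V\setminus\{v\}$, and in particular $\overline{\|v,A\cup C\|}\leq k-1$. Combining these with the estimate $m\geq \tfrac{2}{3}k+1$ from \eqref{eq:m} gives
\[
\tfrac{4}{3}k-2\;\leq\;2m-4\;\leq\;k-1,
\]
i.e.\ $k\leq 3$, which contradicts the standing assumption $k\geq 1/\alpha=36$. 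Hence at least one of $\overline{\|v,A\|}$ or $\overline{\|v,C\|}$ is at most $m-3$, and the corresponding choice of $Y$ works.

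I do not anticipate any real obstacle here: the claim is a one-step consequence of \eqref{eq:ACnonneighbors} together with the overall degree bound $\delta(G)\geq 2k$, and the slack in the constants is very comfortable (roughly $\tfrac{1}{3}k$ versus a constant $1$). The only mild subtlety is remembering that $v$ is allowed to lie in $S$, $T_0$, or (in the connected subcase) $B$, but none of these memberships is used in the argument because the bound on $\overline{\|v,V\|}$ depends only on $d(v)$. The lemma will feed directly into the subsequent construction, where the three common neighbors inside $A$ (or $C$) will be used to insert vertices of $V\setminus(A\cup C)$ into a square cycle built on the corresponding long path.
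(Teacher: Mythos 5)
Your proof is correct and is essentially the paper's own argument in contrapositive form: both rest on combining the bound $\overline{\|y,Y\|}\leq |Y|-m$ from \eqref{eq:ACnonneighbors} with the fact that $v$ has at most about $k$ non-neighbors in $A\cup C$, and both close with $2m-k\geq\frac{1}{3}k+2$ via \eqref{eq:m}. (The only quibble is a harmless off-by-one in your inclusion-exclusion, since $y\in Y$ is not its own neighbor, but the slack of roughly $\frac{1}{3}k$ absorbs it.)
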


\begin{proof}
For all $v\in V\setminus (A\cup C)$, we have 
\begin{equation}
\|v, A\cup C\|\geq 2k-(|V|-(|A|+|C|))=|A|+|C|-k.
\end{equation}
Suppose there exists $v\in V\setminus (A\cup C)$ and $c\in C'$ such that $|(N(v)\cap N(c))\cap C|\leq 2$.  This implies that $\|v, C\|\leq |C|-m+2$ by \eqref{eq:ACnonneighbors}.  So we have $$\|v, A\|\geq |A|+|C|-k-(|C|-m+2)= |A|+m-k-2.$$  Let $a\in A'$, then by \eqref{eq:m}, $$|(N(v)\cap N(a))\cap A|\geq (|A|+m-k-2)+m-|A|= 2m-k-2\geq \frac{1}{3}k\geq 3.$$

\end{proof}

\begin{claim}
There exist two disjoint square $P^{5}$'s connecting edges of $A$ to edges of $C$.\label{con}
\end{claim}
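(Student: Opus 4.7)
The plan is to construct the two square $P^5$'s as $a_1 a_2 v_1 v_2 c_1 c_2$ and $a_3 a_4 v_3 v_4 c_3 c_4$, with all twelve vertices distinct, the endpoint edges satisfying $a_i a_{i+1} \in E(G[A])$ and $c_i c_{i+1} \in E(G[C])$, and the bridge vertices $v_i$ lying in $U := V \setminus (A \cup C)$. Because $\|A,C\| = 0$, the two bridge vertices between the $A$-edge and the $C$-edge are forced into $U$, so this form is essentially the only possibility.

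First I would pick the endpoint edges. Since $\delta(G[A']) \geq 2m - |A| > k/3$ (using $|A| < k$ and $m > 2k/3$), and analogously for $G[C']$, each of $G[A']$ and $G[C']$ contains a matching of size at least $2$. Choose disjoint edges $a_1 a_2, a_3 a_4 \in E(G[A'])$ with all four endpoints distinct, and similarly $c_1 c_2, c_3 c_4 \in E(G[C'])$. Next I would bound the candidate bridge sets for the first $P^5$: they are $W_1 := N(a_1, a_2, c_1) \cap U$ and $W_2 := N(a_2, c_1, c_2) \cap U$. By \eqref{A'C'} and $\|A,C\| = 0$, these sets lie in $S \cup T_0$ (with at most the single exceptional vertex $y_i$). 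Using \eqref{eq:degy}, each $y \in T_1$ satisfies $\overline{\|y, S\|} \leq \sqrt{\alpha}k$, which yields $|W_1 \cap S|, |W_2 \cap S| \geq |S| - 3\sqrt{\alpha}k = \Omega(k)$ and $|W_1 \cap W_2 \cap S| \geq |S| - 4\sqrt{\alpha}k = \Omega(k)$.

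The crux is producing an edge $v_1 v_2 \in E(G)$ with $v_1 \in W_1$ and $v_2 \in W_2$. A direct independence-set bound is not strong enough, since $W_1, W_2$ lie mostly in $S$ and $G[S]$ is nearly independent by $\alpha$-extremity, so $|W_1 \cup W_2|$ can be well below $k$. The key input is Claim~\ref{Y'}: every $v \in W_1 \cup W_2 \subseteq U$ has at least $3$ common neighbors in $A$ with every $y \in A'$, or at least $3$ common neighbors in $C$ with every $y \in C'$. Moreover, the proof of Claim~\ref{Y'} shows that if the $C$-alternative fails for $v$ then $\|v, A\| \geq |A| + m - k - 2 \geq k/3$, which via $|A \setminus A'| \leq k/3 - 2$ forces $\|v, A'\|$ to be positive (and much larger when $|A|$ is close to $k$); symmetrically in the other direction. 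I would split into cases based on how the vertices of $W_1$ and $W_2$ distribute between the two alignments: in the main case both sets contain $\Omega(k)$ strong-bound vertices, and the edge $v_1 v_2$ is produced by combining these strong degree bounds with the fact that each $v \in S$ has $\|v, T_0\| \geq |T_0| - 2\alpha k$ by \eqref{eq:degx}. The degenerate cases, where one of $W_1, W_2$ is dominated by a single alignment, are handled by the symmetric argument.

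Finally I would repeat the construction for the second $P^5$ using $a_3 a_4, c_3 c_4$: removing the six vertices of the first $P^5$ decreases the sizes of the analogous new sets $W_1', W_2'$ by at most $6$, so the $\Omega(k)$ estimates still apply and the same argument produces bridge vertices $v_3, v_4 \in U$ disjoint from those of the first $P^5$. The main obstacle throughout is the crux step producing $v_1 v_2$, which requires exploiting the strengthened conclusion implicit in the proof of Claim~\ref{Y'} together with careful accounting of adjacencies between $S$ and $T_0$.
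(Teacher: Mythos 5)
The crux step---producing the edge $v_1v_2$ with $v_1\in W_1$ and $v_2\in W_2$---is a genuine gap, and you correctly flag it as the main obstacle but do not close it. Both $W_1$ and $W_2$ are concentrated in $S$ (your own estimates put all but $O(\sqrt{\alpha}k)$ of each inside $S$), and $S$ is $\alpha$-extreme, so $G[S]$ may be edgeless; hence the desired edge must have an endpoint in $(T_0\cup B)\cap W_i$, and neither of your tools controls this. Claim~\ref{Y'} (even in its strengthened form) only yields edges from $U$ into $A\cup C$, i.e.\ it helps append outer vertices of the square path in $A$ or $C$, not to join two bridge vertices inside $U$. The bound $\|v,T_0\|\geq |T_0|-2\floor{\alpha k}$ for $v\in S$ does not combine with anything useful either: a vertex $a_2\in A'$ is non-adjacent to all of $C$ by \eqref{dj}, which already consumes at least $m\ge \frac23k+1$ of its total non-degree budget $k-1$, so it is only guaranteed about $\floor{\alpha k}$ neighbors in $T_0$, while $|T_0|\approx 2\sqrt{\alpha}k$; inclusion-exclusion over $a_2,c_1,c_2$ and $v_1$ then gives a negative lower bound for $|T_0\cap N(v_1)\cap W_2|$. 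Finally, $B$ may be empty (and is absent entirely in the disconnected case). So fixing both end-edges first and then searching for two adjacent middle vertices in the resulting common neighborhoods is the wrong order of quantifiers, and the existence of $v_1v_2$ remains unproven.

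The paper builds the connector in the opposite order, which is the idea you are missing: first choose $x\in S$, which by \eqref{eq:degx} is adjacent to all but $2\floor{\alpha k}$ of $T$, then choose $a_{2s}\in A'\cap N(x)$ and $c_1\in C'\cap N(x)$. Since $a_{2s}c_1\notin E$ by \eqref{dj}, these two vertices have at least $k+1$ common neighbors other than $x$, all outside $A\cup C$ by \eqref{A'C'}; as $x$ has at most $k-1$ non-neighbors, one such common neighbor $v$ is adjacent to $x$. The middle pair of the square $P^5$ is then $vx$---one middle vertex deliberately taken from $S$---and the outer vertices $a_{2s-1}$ and $c_2$ are appended afterwards using Claim~\ref{Y'} and the fact that $x\in S$. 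Disjointness of the second $P^5$ is arranged by taking $x'\in S$ nonadjacent to $x$. Your degree computations for selecting end-edges in $A'$ and $C'$ and your disjointness bookkeeping are fine, but without this reordering the central edge you need may simply not exist under the stated hypotheses.
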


\begin{proof}
Set $s:=\lfloor\frac{|A|}{2}\rfloor$ and $t:=\lfloor\frac{|C|}{2}\rfloor$.
Choose nonadjacent vertices $x,x'\in S$ and $a_{2s},c_{1}\in N(x)$
with $a_{2s}\in A'$ and $c_{1}\in C'$. Since $a_{2s}$ and $c_{1}$
are nonadjacent they have at least $k+1$ common neighbors distinct
from $x$, and these common neighbors are not in $A\cup C$. One of
them $v$ must also be adjacent to $x$. By Claim \ref{Y'} there exists, without loss of generality, $a_{2s-1}\in A$ such that $a_{2s},v\in N(a_{2s-1})$.  
Since $x\in S$, there exists $c_{2}\in C$ such that $x,c_{1}\in N(c_{2})$. Thus
$Q:=a_{2s-1}a_{2s}vxc_{1}c_{2}$ is a square $P^{5}$ connecting $a_{2s-1}a_{2s}$
to $c_{1}c_{2}$. Similarly, we can choose $a_{1},c_{2t}\in N(x')$
with $a_{1}\in A'-a_{2s-1}-a_{2s}$ and $c_{2t}\in C'-c_{1}-c_{2}$.
Since $a_{1}$ and $c_{2t}$ are nonadjacent, there exist $k$ common
neighbors of $a_{1}$ and $c_{2t}$ that are distinct from $x'$ and
$v$. One of them $v'$ is adjacent to $x'$, and $v'\ne x$ by the
choice of $x,x'$. Moreover, $v'\notin A\cup C$. So as above, we can choose $a_{2}\in A$ and $c_{2t-1}\in C$ so that
$Q':=c_{2t-1}c_{2t}\{v'x'\}a_{1}a_{2}$, $Q\cap Q'=\emptyset$ and $Q'$ is a square $P^{5}$
connecting $c_{2t-1}c_{2t}$ to $a_{1}a_{2}$ (note that we cannot specify the order of $v'$ and $x'$). 
\end{proof}

Finally we claim that there exist paths $$R:=a_{1}a_{2}\dots a_{2s-1}a_{2s}
\subseteq G[A] \textrm{ and } R':=c_{1}c_{2}\dots c_{2t-1}c_{2t}\subseteq G[C],$$ such that $|R|=2s$ and $|R'|=2t$. If $|A|=m$, then $A=A'$ 
and thus $G[A]$ is complete by \eqref{eq:ACnonneighbors}.
Otherwise $|A|\geq m+1$ and thus by \eqref{eq:m} we have
\begin{equation}\label{halfA}
\frac{1}{3}k+1\leq \frac{1}{2}|A|.
\end{equation}
By \eqref{dj} and \eqref{halfA}, we have 
$$\delta(G[A])\geq 2k-(|V|-(|A|+|C|))=|A|+|C|-k\geq |A|+2-(\frac{k}{3}+1)\geq \frac{1}{2}|A|+2.$$ 
Thus for all $a,a',a''\in A$, 
\begin{equation*}G[A\setminus \{a,a',a''\}]
\text{ is hamiltonian connected,}
\end{equation*} 
since $\delta(G[A\setminus\{a,a',a''\}])\geq \frac{1}{2}|A|-1>\frac{1}{2}(|A|-3)$. If $|A|=2s$, then we use the fact that $G[A\setminus\{a_1,a_{2s}\}]$ is hamiltonian connected to get $R$.  If $|A|=2s+1$ we let $a'\in A\setminus\{a_1,a_2,a_{2s-1}, a_{2s}\}$, and we use the fact that $G[A\setminus\{a_1, a_{2s}, a'\}]$ is hamiltonian connected to get $R$.  Since $|A|\geq |C|$, the same argument gives us $R'$ in $G[C]$. 

So by Claim \ref{con}, $D:=RQR'Q'$ is an even cycle of length $2s+2t+4\geq 2m+2$ (note that $D\not\subseteq G[T_1]$). Recall that $V(D)\cap S\subseteq\{x,v,x',v'\}$ and set $S':=S\setminus D$. As in Case 1, let $P:=\{p_{1},\dots,p_{s}, p_1',\dots, p_t'\}$
be a set of \emph{ports}, where $p_{i}:=\{a_{2i-1},a_{2i},a_{2i+1},a_{2i+2}\}$ for $1\leq i\leq s-1$ and $p_j':=\{c_{2j-1},c_{2j},c_{2j+1},c_{2j+2}\}$ for $1\leq j\leq t-1$. 
  By Claim \ref{ports}, 
  there exist $x_1,\dots,x_{s-1},x'_1,\dots,x'_{t-1}$ such that
    $$a_{1}a_{2}x_{1}a_{3}a_{4}x_{2}\dots x_{s-1} a_{2s-1}a_{2s}vxc_{1}c_{2}x_1'c_3c_4x_2'\dots x_{t-1}'c_{2t-1}c_{2t}\{v'x'\}a_1a_2$$ is a square cycle of length at least $2s+2t+4+s-1+t-1\geq 3m-1>2k$. Thus by Theorem~\ref{thm:FK3}, $G$ has a hamiltonian square
cycle.

\end{proof}

\section{Conclusion}
We have established a concrete threshold $n_0:=\n$ such that P\'osa's Conjecture holds for all graphs of order at least  $n_0$, using methods essentially from prior to 1996. It seems in retrospect, that we were blinded by the brilliance of the Regularity-Blow-up method, and missed that the crucial idea of \cite{KSSp} was just to divide the problem into extremal and non-extremal cases.  However P\'osa's Conjecture remains open. We suspect that our probabilistic methods cannot be used to obtain an improvement of more than a factor of 1000. On the other hand we believe that ordinary graph theoretic methods have not yet been exhausted.

We have also developed the method of special reservoirs, for removing regularity from certain arguments.  We believe that this could be used on other problems.
The paper \cite{LSS} was written with the goal of developing methods for a more general set of problems. 
In particular they used an
\emph{absorbing path} lemma which contributes to a much larger value of
$n_0$. However other problems do not (yet) have an analog of Theorem \ref{thm:FK3}, while the absorbing technique is quite adaptable. 
Here are some other possible candidates
for applying these new techniques, the first of which was discussed in
\cite{LSS}.

\begin{conjecture}[Seymour \cite{S}]
For all positive integers $k$, every graph $G$ with  $\delta(G)\ge \frac{k}{k+1}|G|$ contains the $k^\text{th}$ power of a hamiltonian cycle.
\end{conjecture}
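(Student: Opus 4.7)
The plan is to mimic this paper's architecture---extremal/non-extremal dichotomy, special reservoir, connecting lemma, and path cover lemma---adapted to $k$-th powers. I would first define an $\alpha$-extreme set as $S\subseteq V(G)$ with $|S|\geq(1-\alpha)\frac{n}{k+1}$ and $\|v,S\|<\alpha\frac{n}{k+1}$ for all $v\in S$, modelled on the tight example $K_{(k+1)t+k}-E(K_{t+1})$. The proof then splits into the extremal case (where $G$ contains such a set) and the non-extremal case.

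For the extremal case, I would generalize Section~\ref{sec:Ex}. If a small $\alpha$-extreme set exists, one should be able to locate $k$ nearly-independent sets $S_1,\ldots,S_k$ each of size about $\frac{n}{k+1}$ covering most of $V(G)$, with strong density between them. I would then find a long even cycle or path in $V(G)\setminus\bigcup S_i$ and perform a Hall-theoretic insertion analogous to Claim~\ref{ports}, but with ports of size $2k$ consecutive vertices, since inserting a new vertex into a $k$-th power cycle requires adjacency to $2k$ neighbors. The degree calculations analogous to \eqref{SP}--\eqref{PS} generalize, with the $\alpha$ constraint and the $4\sqrt{\alpha}$-type terms adjusted by factors depending on $k$.

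For the non-extremal case, the special-reservoir machinery generalizes: a special set would now be an intersection of a bounded (in $k$) number of neighborhoods, so there are still only polynomially many. The Reservoir Lemma goes through with Chernoff bounds as before. The Connecting Lemma would iteratively select reservoir vertices to extend a partial $k$-th power path, at each step invoking the non-extremal property of some special set to guarantee a vertex with the required neighborhood pattern; this replaces the three-case argument of Lemma~\ref{connecting} with an induction whose depth grows with $k$. The Path Cover Lemma generalizes using an optimal $k$-th power path analog of Lemma~\ref{pathcover}, relying on Fan--Kierstead-type theorems for $k$-th powers, which would either need to be proved separately or extracted from the literature.

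The main obstacle is the absence of an analog of Theorem~\ref{thm:FK3} for general $k$. For $k=2$ that result lets the proof terminate as soon as a $k$-th power cycle of length greater than $\frac{k}{k+1}n$ is located, which is exactly what makes it possible to avoid both the Blow-up Lemma and absorbing paths. Without such a shortcut, the Connecting Lemma would have to deliver \emph{hamiltonian} $k$-th power cycles directly, which likely forces the use of absorbing paths as in \cite{LSS}, at the cost of vastly enlarging $n_0$ and complicating the argument. Thus a clean extension of our method to Seymour's Conjecture hinges on proving the new structural result: every $G$ with $\delta(G)\geq\frac{k}{k+1}n$ that contains a $k$-th power cycle of length greater than $\frac{k}{k+1}n$ contains a hamiltonian $k$-th power cycle. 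Establishing such a statement appears to be the hardest single step, and I expect it to be the decisive obstacle.
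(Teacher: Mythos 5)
This statement is labelled as a \emph{conjecture} in the paper, not a theorem: Seymour's Conjecture is listed in the Conclusion as an open problem and a possible target for future work, and the paper offers no proof of it. (The paper only notes that Koml\'os, S\'ark\"ozy and Szemer\'edi proved it for graphs of order at least some unspecified $n(k)$ using the Regularity and Blow-up Lemmas.) So there is nothing in the paper to compare your argument against, and more importantly, what you have written is a research programme rather than a proof. You candidly identify the decisive gap yourself: the entire architecture of this paper terminates via Theorem~\ref{thm:FK3}, which converts a square cycle of length greater than $\frac{2}{3}n$ into a hamiltonian square cycle, and no analog of that theorem is known for $k$-th powers with $k\ge 3$. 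Without it, neither the Connecting Lemma route nor the Path Cover route closes the argument, and falling back on absorbing paths is itself a substantial unproven adaptation, not a citation.

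Beyond that central missing piece, several other steps you describe as ``generalizing'' are genuinely open: the Fan--Kierstead results (Theorems~\ref{thm:FK1}, \ref{thm:FK2} and Lemmas~\ref{segment}, \ref{edgetopath}) on optimal square paths are specific to $k=2$ and have no established $k$-th power analogs in the literature, and the extremal case for general $k$ involves up to $k$ interacting near-independent sets rather than one, which changes the combinatorics of the insertion argument qualitatively, not just by adjusting constants. Your proposal is a reasonable and honest assessment of how one might attack Seymour's Conjecture with these methods, and correctly pinpoints where the difficulty lies, but it does not prove the statement, and the statement remains a conjecture.
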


Koml\'os, S\'ark\"ozy and Szemer\'edi \cite{KSSps,KSSs} used the Regularity and Blow-up Lemmas to prove that there exists a function $n(k)$ such that Seymour's Conjecture holds for all $k$ and graphs of order at least $n(k)$.

Ch\^au also used the Regularity and Blow-up Lemmas to prove the following Ore-type version of P\'osa's Conjecture for graphs of large order.
\begin{thm}[Ch\^au \cite{C}]
\label{Main Theorem} Let $G$ be a graph on $n$ vertices such that $d(x)+d(y)\geq
\frac{4}{3}n-\frac{1}{3}$ for all $xy\notin E(G)$.

\textup{(a)} If $\delta(G)=\frac{1}{3}n+2$ or $\delta(G)=\frac{1}{3}n+\frac{5}{3},$ then
$G$ contains a hamiltonian square path.

\textup{(b)} If $\delta(G)>\frac{1}{3}n+2,$ then for sufficiently large $n,$ $G$
contains a hamiltonian square cycle.
\end{thm}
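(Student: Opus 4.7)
The plan is to transfer the extremal/non-extremal framework of this paper to the Ore-type setting, handling the additional complications introduced by the non-uniform degree structure that an Ore condition allows.

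For part (a), the near-minimum degree condition forces a very rigid structure. A vertex $v_0$ realising $\delta(G)\le\frac{n}{3}+2$ has at least $\frac{2n}{3}-3$ non-neighbors, and the Ore hypothesis forces each such non-neighbor to have degree at least $n-\frac{7}{3}$. So $V(G)$ splits (modulo an $O(1)$ overlap) into a ``dense core'' of at least $\frac{2n}{3}-3$ near-universal vertices and a ``sparse'' remainder of at most $\frac{n}{3}+3$ vertices of low degree. I would construct a hamiltonian square path directly: first build a long square path inside the dense core (where the induced subgraph is nearly complete, so any square path of the desired length is trivially available), and then insert the sparse vertices one at a time. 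Each sparse vertex has at least $\frac{n}{3}+2$ neighbors in the core, and hence many consecutive $4$-runs of neighbors along the path, into which it can be inserted; a Hall-type matching like Claim~\ref{ports} ensures that all insertions can be done simultaneously.

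For part (b), I would follow the outline of this paper. First, prove an Ore analogue of Corollary~\ref{3k} to reduce to $n\equiv 0\pmod 3$. Define an $\alpha$-extreme set in an Ore-appropriate way, for example as a set $S$ of size about $(1-\alpha)\frac{n}{3}$ whose vertices have few edges inside $S$ together with ``high-enough'' external degree. In the non-extremal case, adapt the Reservoir, Path Cover and Connecting Lemmas to the Ore setting: although low-degree vertices can be numerous, the Ore condition forces the set $L$ of vertices with degree below $(\frac{2}{3}-\ep)n$ to be a clique whose non-neighbors are confined to a small near-universal core, so the non-extremal hypothesis lifts to a near-Dirac hypothesis on $G-L$. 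Apply Lemmas~\ref{reservoir}, \ref{pathcover}, \ref{connecting} and Theorem~\ref{thm:FK3} essentially verbatim inside $G-L$ to obtain a hamiltonian square cycle of $G-L$, then reinsert the vertices of $L$ one at a time using the same port-matching technique as in part (a). In the extremal case, adapt Section~\ref{sec:Ex} by treating the vertices of an extreme set according to their external-degree class and using the Ore condition to control the mixed edges between strata.

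The main obstacle will be the Ore analogue of the Connecting Lemma. In the Dirac setting, $\delta(H)\ge(\frac{2}{3}-\ep)n$ is used repeatedly to guarantee large common neighborhoods; under an Ore condition, common neighborhoods are still large on average, but a few pairs of vertices can have very small joint neighborhood. The strategy would be to identify $L$ up front, show that its non-neighbors are confined to a small near-universal core $U$, and reserve $U$ for two simultaneous purposes: as ``bridges'' through which the Connecting Lemma constructs short paths around vertices in $L$, and as insertion ports for reinserting the vertices of $L$ into the final square cycle. Scheduling these two roles without conflict, while keeping the reservoir construction valid for all special sets in the presence of the $L$-$U$ structure, is the most delicate part, and is what would most likely force the ``sufficiently large $n$'' hypothesis in part (b).
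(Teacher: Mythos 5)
This theorem is not proved in the present paper: it is quoted from Ch\^au's preprint \cite{C}, which (as the surrounding text states) establishes it using the Regularity and Blow-up Lemmas. So there is no in-paper proof to compare against, and your proposal has to stand on its own. As written it is a research outline rather than a proof, and it contains at least one gap in each part that is structural rather than technical. In part (a), your insertion step rests on the claim that every ``sparse'' vertex has at least $\frac{n}{3}+2$ neighbours \emph{in the core}. The Ore condition does not give this: the low-degree vertices form a clique $L$ of size up to about $\frac{n}{3}+3$, and a vertex of $L$ with degree exactly $\frac{n}{3}+2$ may have essentially all of its neighbours inside $L$ and almost none in the core. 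Such a vertex cannot be placed into any port of a core square path, and the Hall-type argument of Claim~\ref{ports} has no analogue, since that claim needs $\overline{\|x,T\|}\le 2\floor{\alpha k}$, i.e.\ near-complete attachment of the inserted vertex to the host path. In this configuration the square path must traverse $L$ in long consecutive runs rather than visit it one vertex at a time, which is a different construction from the one you describe.

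In part (b) the difficulty is worse: the entire architecture of this paper hinges on Theorem~\ref{thm:FK3}, which completes a square cycle of length greater than $\frac{2}{3}|G|$ to a hamiltonian one \emph{only under the hypothesis} $\delta(G)\ge\frac{2}{3}|G|$. Under the Ore condition $\delta(G)$ may be as small as $\frac{1}{3}n+2$, so Theorem~\ref{thm:FK3} does not apply to $G$; nor does it apply inside $G-L$, because when $|L|\approx\frac{n}{3}$ a vertex of $G-L$ can lose up to $\frac{n}{3}$ neighbours and $\delta(G-L)$ can drop to roughly $\frac{1}{2}|G-L|$. Likewise, reinserting $|L|\approx \frac{n}{3}$ vertices into a square cycle on $\approx\frac{2}{3}n$ vertices requires essentially every consecutive pair of the cycle to serve as a port for an $L$-vertex, which again needs near-complete attachment of $L$ to the core --- exactly what the Ore condition fails to provide. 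Without a substitute for the Theorem~\ref{thm:FK3} completion step (an absorbing structure, or a direct hamiltonicity argument tailored to the Ore setting), obtaining a long square cycle does not finish the proof; this is presumably why Ch\^au's argument goes through the Regularity--Blow-up machinery rather than the elementary route of this paper.
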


For a directed graph $G$, the \emph{minimum semi-degree} of $G$, denoted $\delta^0(G)$, is the minimum of the minimum in-degree $\delta^-(G)$ and the minimum out-degree $\delta^+(G)$.  An \emph{oriented graph} is a directed graph with no $2$-cycles. Keevash, K\"uhn, and Osthus proved the following oriented version of Dirac's theorem using the Regularity-Blow-up method (with a directed version of the Regularity Lemma).

\begin{thm}[Keevash, K\"uhn, Osthus \cite{KKO}] Let $G$ be an oriented graph on $n$ vertices.  If $\delta^0(G)\geq \frac{3n-4}{8}$ and $n$ is sufficiently large, then $G$ contains a hamiltonian cycle.
\end{thm}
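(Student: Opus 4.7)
The plan is to mirror the extremal/non-extremal philosophy of the present paper, adapted to the oriented setting. First, I would determine the extremal configurations at the threshold $\delta^0=(3n-4)/8$; these come from a small family of near-bipartite oriented constructions, which dictates the right notion of ``$\alpha$-extreme set'' for oriented graphs---likely parameterized by both in- and out-degrees into the set, since orientation breaks the symmetry that was available in the undirected case. The proof then splits: either $G$ contains such an $\alpha$-extreme configuration, or it does not.

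In the extremal case I would proceed by direct graph-theoretic construction in the spirit of Section~\ref{sec:Ex}: use the near-bipartite structure to build a Hamiltonian cycle by greedy insertion along a long directed cycle in the ``large'' side, handling the few ``bad'' vertices by placing them into gaps provided by Hall-type matching arguments analogous to Claim~\ref{ports}. The oriented constraint forces each gap to come with a compatible orientation pattern, but the matching argument still applies once the bipartite graph whose parts are vertices and gaps is set up carefully.

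In the non-extremal case I would build an analog of the Reservoir-Path-Connecting pipeline. First, find an oriented $(\alpha,\beta,\epsilon,\varrho)$-special reservoir $R$, meaning a random subset whose in- and out-neighborhood intersections are close to expectation for every vertex and which avoids certain bad ``special sets'' arising in the connecting step; Chernoff plus a union bound over polynomially many special sets should suffice, just as in Lemma~\ref{reservoir}. Then apply a directed path-cover lemma, based on known minimum semi-degree conditions for long directed paths, to partition most of $G-R$ into a bounded number of long directed paths. Finally, connect the ends of these paths into a long directed cycle through $R$ via a directed analog of Lemma~\ref{connecting}.

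The main obstacle is the absence of an analog of Theorem~\ref{thm:FK3} in the oriented setting: one cannot conclude that a sufficiently long directed cycle extends to a Hamiltonian one. Thus the non-extremal construction must cover every single vertex, which likely forces an absorbing-path lemma in the style of \cite{RRS2,LSS}---precisely the tool the present paper was able to avoid. A secondary obstacle is that the no-2-cycle constraint sharply restricts rerouting: the directed Connecting Lemma must track orientations at every internal vertex, roughly doubling the case analysis, and the Fan-Kierstead-style bounds on neighborhoods of optimal paths (Lemma~\ref{segment}, Lemma~\ref{edgetopath}) need oriented analogs whose constants may be weaker. Finally, verifying that a single parameterized notion of extremity captures every near-counterexample at the threshold $(3n-4)/8$ may itself require nontrivial structural work, since the oriented extremal space is richer than in the undirected Pósa setting.
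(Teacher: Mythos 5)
This theorem is not proved in the paper at all: it is quoted in the Conclusion as an external result of Keevash, K\"uhn and Osthus \cite{KKO}, who proved it using the Diregularity Lemma (a directed version of the Regularity Lemma) together with an exact extremal analysis. So there is no internal proof to compare your attempt against, and your proposal must stand on its own as a proof of the cited theorem. It does not. What you have written is a research programme in which every load-bearing step is deferred: you do not identify the extremal configurations at the exact threshold $\delta^0(G)\ge\frac{3n-4}{8}$ (this is genuinely delicate, since the threshold sits well below $n/2$ and the extremal examples are not simply ``near-bipartite''); you do not state, let alone prove, the oriented analogs of the path-cover and connecting lemmas you invoke; and the Hall-type insertion argument of Claim~\ref{ports} does not transfer, because inserting a vertex $x$ into a gap $y_iy_{i+1}$ of a directed Hamilton cycle requires the specific orientation $y_i\to x\to y_{i+1}$, and with $\delta^0$ only about $3n/8$ one cannot guarantee enough correctly oriented gaps by the crude counting used in the undirected case.

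The decisive gap is the one you name yourself and then wave past. The entire architecture of this paper rests on Theorem~\ref{thm:FK3}, which lets a cycle of length greater than $\frac{2}{3}n$ be upgraded to a Hamiltonian one; no such statement is available for oriented graphs, so the reservoir--path-cover--connecting pipeline produces at best a long cycle missing some vertices, which is not the theorem. Saying that this ``likely forces an absorbing-path lemma'' is not a proof step: constructing an absorbing structure under a semi-degree condition below $n/2$ (where two vertices may have empty common out-neighborhood) is exactly the hard part, and it is not how \cite{KKO} proceeds either --- their non-extremal case embeds a Hamilton cycle via ``shifted walks'' in the reduced digraph after applying the Diregularity Lemma. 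As written, your proposal establishes nothing beyond a plausible-sounding outline with its central difficulties left unresolved.
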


Finally Treglown conjectured the following oriented version of P\'osa's conjecture.

\begin{conjecture}[Treglown \cite{T}]
Let $G$ be an oriented graph on $n$ vertices.  If $\delta^0(G)\geq \frac{5n}{12}$, then $G$ contains a the square of a hamiltonian cycle.
\end{conjecture}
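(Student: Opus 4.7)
The plan is to adapt the extremal vs.\ non-extremal dichotomy used here for P\'osa's conjecture to the oriented setting, with the absorbing technique of \cite{LSS} playing the role of the missing oriented analog of Theorem~\ref{thm:FK3}. Fix a small $\alpha>0$ and call a set $S\subseteq V(G)$ oriented $\alpha$-extreme if $|S|\ge(1-\alpha)\tfrac{5n}{12}$ and every vertex of $S$ has small in-degree and small out-degree into $S$, with the exact parameters calibrated so that both halves of the argument can be closed.

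For the non-extremal case, the first task is to establish oriented analogs of Lemma~\ref{segment} and Lemma~\ref{edgetopath} for optimal square \emph{directed} paths; these are the main technical engines. Given these, an oriented Path Cover Lemma and an oriented Connecting Lemma can be obtained by repeating the arguments of Lemma~\ref{pathcover} and Lemma~\ref{connecting}, with the undirected neighborhood $N(\cdot)$ replaced by the appropriate intersection of in- and out-neighborhoods, and with the bipartite matching of Claim~\ref{ports} replaced by one that respects arc directions. Because no oriented version of Theorem~\ref{thm:FK3} lets us terminate once the square cycle has length $(5/6-o(1))n$, we would instead build an absorbing square directed path $P_A$ of small linear order that can swallow any polynomially small set of leftover vertices, construct a special reservoir $R$ via a directed generalization of Lemma~\ref{reservoir} disjoint from $P_A$, cover $V\setminus(R\cup V(P_A))$ by a few long square directed paths, connect them and $P_A$ through $R$ using the oriented Connecting Lemma, and finally absorb the remaining vertices of $R$ into $P_A$ to close into a hamiltonian square directed cycle.

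For the extremal case, one classifies the configurations forced by an oriented $\alpha$-extreme set at the degree threshold $\tfrac{5n}{12}$; these should reduce to a short list of partition patterns, likely blow-ups of short oriented cycles. In each one, hamiltonicity of the underlying directed graph can be obtained from the Keevash--K\"uhn--Osthus theorem \cite{KKO} applied to appropriate induced subdigraphs, and the remaining vertices can be inserted into ports as in the proof of Theorem~\ref{thm:good}, using a directed analog of Claim~\ref{ports} whose Hall-type condition counts common out- and in-neighbors separately.

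The principal obstacle is the construction of the oriented absorbing path: one needs every short directed square path to have a positive-probability absorber gadget, and the directional constraints make the counting considerably subtler than in the undirected case, almost certainly inflating $n_0$ well beyond $\n$. A secondary but serious difficulty is the verification of the oriented versions of Lemma~\ref{segment} and Lemma~\ref{edgetopath}, since the 3-chord and 4-chord exchange arguments of \cite{FK1,FK2} exploit undirected symmetry of the square-path condition and must be rebuilt while tracking the direction of each arc in the rotation.
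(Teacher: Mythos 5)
This statement is an open conjecture (due to Treglown), which the paper does not prove; it appears only in the concluding section as a suggested target for future applications of the special-reservoir method. So there is no proof in the paper to compare yours against, and what you have written is a research programme rather than a proof. You are candid about this yourself: you flag the construction of an oriented absorbing path and the oriented analogs of Lemma~\ref{segment} and Lemma~\ref{edgetopath} as unresolved obstacles. Those are not minor technical details to be filled in later --- they are precisely the substance of the problem. The chord-exchange arguments behind Lemma~\ref{segment} and Lemma~\ref{edgetopath} rely on being able to traverse a square path in either direction and to reattach segments after a rotation, and it is not known how to recover statements of comparable strength for square directed paths in oriented graphs. Without them, neither your Path Cover Lemma nor your Connecting Lemma gets off the ground, and the entire non-extremal half of your plan is conditional on machinery that does not yet exist.

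There are further gaps even granting those lemmas. The paper's proof of Theorem~\ref{non-extremal} terminates by invoking Theorem~\ref{thm:FK3}, which converts a long square cycle into a hamiltonian one; you correctly note there is no oriented analog and propose to substitute an absorbing path, but you give no construction of the absorber gadgets, no count showing they exist in positive density under the semi-degree hypothesis $\delta^0(G)\ge\frac{5n}{12}$, and no argument that the leftover set after connecting is small enough to be absorbed. In the extremal case, the assertion that oriented extreme configurations ``should reduce to a short list of partition patterns, likely blow-ups of short oriented cycles'' is a conjecture within a conjecture: the paper's Theorem~\ref{thm:good} required a delicate case analysis (Propositions~\ref{pro:LPath} and~\ref{pro:LCyc}, Claims~\ref{ports}--\ref{con}) even in the undirected setting with a single extremal pattern, and you have not identified the oriented patterns, let alone handled them. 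In short, every load-bearing step of your outline is either unproved or explicitly deferred, so this cannot be accepted as a proof of the conjecture.
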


\subsection*{Acknowledgements}

We thank the referees for their thoughtful comments which improved the presentation of this paper.

\end{document}